\newtheorem{theorem}{Theorem}[section]
\newtheorem{lemma}[theorem]{Lemma}
\newtheorem{proposition}[theorem]{Proposition}
\theoremstyle{definition}
\newtheorem{definition}[theorem]{Definition}
\newtheorem{condition}[theorem]{Condition}	\newtheorem{assumption}[theorem]{Assumption}
\newtheorem{remark}[theorem]{Remark}
\providecommand{\MSC}[1]{\par\smallskip\textbf{\textbf{MSC2020: }} #1}
\providecommand{\keywords}[1]{\par\smallskip\textbf{\textbf{Keywords: }} #1}
\numberwithin{equation}{section}
\newcommand{\cA}{\mathcal{A}}
\newcommand{\cC}{\mathcal{C}}
\newcommand{\cD}{\mathcal{D}}
\newcommand{\cE}{\mathcal{E}}
\newcommand{\cF}{\mathcal{F}}
\newcommand{\cG}{\mathcal{G}}
\newcommand{\cH}{\mathcal{H}}
\newcommand{\cI}{\mathcal{I}}
\newcommand{\cL}{\mathcal{L}}
\newcommand{\cP}{\mathcal{P}}
\newcommand{\cX}{\mathcal{X}}
\newcommand{\bE}{\mathbb{E}}
\newcommand{\bN}{\mathbb{N}}
\newcommand{\bQ}{\mathbb{Q}}
\newcommand{\bR}{\mathbb{R}}
\newcommand{\bT}{\mathbb{T}}
\newcommand{\bfH}{\mathbf{H}}
\newcommand{\bfR}{\mathbf{R}}
\newcommand{\bfV}{\mathbf{V}}
\DeclareMathOperator*{\LIM}{LIM}
\newcommand{\vn}[1]{\left| \! \left| #1\right| \! \right|} 
\newcommand{\ip}[2]{\langle #1,#2\rangle}
\newcommand{\PR}{\mathbb{P}}
\newcommand{\bONE}{\mathbbm{1}}
\newcommand{\dd}{ \mathrm{d}}
\title{A large deviation principle for Markovian slow-fast systems}
\author{Richard C. Kraaij\thanks{Delft Institute of Applied Mathematics, Delft University of Technology, Van Mourik Broekmanweg 6, 2628 XE Delft, The Netherlands. \emph{E-mail address}: r.c.kraaij@tudelft.nl} \quad \quad Mikola C. Schlottke\thanks{Department of Mathematics and Computer Science, Eindhoven University of Technology. \emph{E-mail address}: mikola.schlottke@outlook.com}}
\begin{document}

	\maketitle
	
	{\abstract{
			
			We prove pathwise large deviation principles of slow variables in slow-fast systems in the limit of time-scale separation tending to infinity. In the limit regime we consider, the convergence of the slow variable to its deterministic limit and the convergence of the fast variable to equilibrium are competing at the same scale.
			The large deviation principle is proven by relating the large deviation problem to solutions of Hamilton-Jacobi-Bellman equations, for which well-posedness was established in the companion paper~\cite{KrSc19}.
			
			We cast the rate functions in action-integral form and interpret the Lagrangians in two ways. First, in terms of a double-optimization problem of the slow variable's velocity and the fast variable's distribution, similar in spirit to what one obtains from the contraction principle. Second, in terms of a principal-eigenvalue problem associated to the slow-fast system. The first representation proves in particular useful in the derivation of averaging principles from the large deviations principles.
			
			As main example of our general results, we consider empirical measure-flux pairs coupled to a fast diffusion on a compact manifold. We prove large deviations and use the Lagrangian in double-optimization form to demonstrate the validity of the averaging principle in this system.
			
			\keywords{Large deviations, two-scale systems, Markov processes, mean-field interacting processes, Hamilton-Jacobi-Bellman equations, viscosity solutions}
			
			\MSC{primary 60F10; 49L25; secondary 60J35} 
			

	}}

	\hspace{4cm}

	\section{Introduction}
	\subsection{Markovian slow-fast systems}
	Systems with two or multiple time-scales are ubiquitous in the natural sciences and beyond. For example, such systems are studied in finance \cite{FeFoKu12,BaCeGh15,Gh18}, atmospheric models \cite{BoNaTa13,BoGrTaEi16}, the theory of hydrodynamic limits \cite{KL99}, and
	genetic networks~\cite{CrDeRa09}. 	They also arise in statistical physics for models at critical parameters \cite{AdMPe91,CoDaP12,CoKr18,CoKr20}. See also \cite{PaSt08} for a general mathematical treatment for two-scale diffusion processes. 
	
	The hallmark of systems with multiple time-scales is the equilibration of fast components on a time scale at which slow components have yet to make a significant change. As a consequence, the slow components evolve approximately under the averaged effect of the fast components. On the other hand, the equilibrium state of the fast components depends on the state of the slow components. This leads to an interesting interplay between slow and fast states. This observed separation of time scales and the resulting coupling motivates the term \emph{slow-fast system}.
	
	\smallskip
	
	In this paper, we focus on the evolution of the slow components of a slow-fast systems in a Markovian context. 
	The approximation of the dynamics of the slow components by averaging over the fast components is known as the \emph{averaging principle}. This procedure significantly reduces complexity and leads to models that are easier to analyze. In the limit in which the time-scale separation tends to infinity this transition can be justified by rigorous convergence results, as demonstrated for instance in~\cite{BaKuPoRe06,KaKu13,KaKuPo14} in the context of reaction networks.

	\smallskip

	In order to estimate the approximation error at a finite time-scale separation, many efforts have therefore concentrated on establishing finer asymptotic results. An example of such an asymptotic result is a path-wise large deviation principle for the slow component as the time-scale separation tends to infinity. The large deviation principle quantifies the decay of the probability of deviations away from the averaging principle at an exponential scale as a function of the time-scale separation. 
	
	The large deviation analysis is a crucial ingredient for analyzing the rare-event behaviour of the system, see Theorem 1.4 of~\cite{BuDu19}. In addition, the large deviation analysis can subsequently be used to design Monte-Carlo for estimating rare-event probabilities, as carried out in the context of multi-scale diffusions~\cite{DuSpWa12}. 
	
	\smallskip
	
	The analysis of the large deviation behaviour of slow-fast systems is typically carried out for systems that explicitly consist of a slow and a fast component, in contrast to the systems studied at criticality in statistical physics \cite{KL99,AdMPe91,CoDaP12,CoKr18,CoKr20}. This explicit decomposition has the benefit of having clear interpretability of results and serves as a precursor to the study of more complicated systems.
	
	We will therefore work in the context of slow-fast systems with a clear decomposition between slow and fast components. As mentioned above, the slow component possibly depends on the behaviour of the fast component. For clarity, we distinguish three levels of dependency:
	\begin{enumerate}[$\bullet$]
		\item \textit{Independent}: the fast component is independent of the slow one.
		\item \textit{Weak}: the fast component depends weakly on the slow component. This is e.g. observed in jump processes where the coefficients of the jump rates depend on the slow-component. Similarly, this occurs for diffusion processes where the drift and diffusion terms of the fast component depend on the slow component.
		\item \textit{Strong}: the jump sizes of the fast process themselves depend on the slow process, or the generator of a slow-fast system of diffusion type contains mixed derivates, i.e. arising from both slow-and fast components.
		
	\end{enumerate}
	We will work in the context of weak coupling. Let us first review the literature of the results obtained so far.
	
	\smallskip
	
	The first large deviations results for slow-fast systems were obtained by Wentzell (recorded e.g. in \cite{FW98}) for a slow system and an independent fast system. Other early works in the context of diffusion process in which the slow process is coupled to an independent fast one include  \cite{Li96,Ve99}. This has been recently also been carried out using Hamilton-Jacobi methods in \cite{FeFoKu12,BaCeGh15,Gh18}, as well as in the context where the fast process is replaced by a jump process \cite{HeYi14,HuMaSp16}.

	\smallskip
	
	In the context that the fast process depends weakly on the slow one progress was initially made for diffusion processes by \cite{Ve00} and this has been extended using the weak convergence method in \cite{Sp13} for diffusion processes and by \cite{BuDuGa18} for diffusion processes coupled to a fast jump process. A treatment of the behaviour of the empirical measure of weakly interacting jump processes coupled to a fast jump process has been given in \cite{YaSu20}. Finally, an interesting account \cite{BoGrTaEi16} was written from the point of view of statistical physics. A unified and effective treatment of these results is so far lacking from the literature. We comment more on this below. The results of this paper will focus also on this weakly coupled regime.
	
	\smallskip
	
	To be complete, we mention results in the strongly coupled regime. \cite{Pu16} studies fully coupled slow-fast diffusions by considering the joint distribution of the slow process and the empirical measure of the fast process. Using Hamilton-Jaccobi theory, \cite{FK06} establish large deviation results for strongly coupled diffusion processes under restrictions on the coefficients. This is similarly done by \cite{KuPo17,Po18}, but this time under the assumption of solvability of the Hamilton-Jacobi equation, which is a non-trivial problem in itself. 
	
	\smallskip
	
	Our results will be based in the context of the weakly coupled regime. We will introduce a method, based on the Hamilton-Jacobi technique, that allows us to treat slow-fast processes of any type (we assume compactness of the state-space of the fast process to avoid technicalities that are not of importance for our general argument) in the context that we know that 
	\begin{itemize}
		\item the slow process satisfies a Donsker-Varadhan large deviation principle if the fast process is kept fixed;
		\item the fast process satisfies path-space large deviations if the slow process is kept fixed;
		\item the coefficient processes depend 
		weakly
		on each other.
	\end{itemize}
	This framework allows us to recover previous results obtained in the weak-coupling regime, given that the fast process takes its values in a compact state space. Moreover, we treat the context of the empirical measure-flux pair of weakly interacting jump processes coupled to a fast diffusion. The results by \cite{YaSu20} are extended by our method. Indeed, they are obtained when replacing the fast diffusion by a fast jump process and considering only the empirical measures instead of the more general measure-flux pairs.
	
	
	\smallskip
	
	In our framework, we will reduce the large deviation principle to uniqueness of viscosity solutions to the Hamilton-Jacobi-Bellman equation in terms of an Hamiltonian that is given in variational form:
	\begin{equation} \label{eqn:Hamiltonian_intro}
		\cH(x,p) = \sup_{\pi} \left\{\int V_{x,p}(z) \pi(\dd z) - \cI(x,\pi) \right\}.
	\end{equation}
	In this context, $(x,p) \mapsto V_{x,p}(z)$ is the Hamiltonian that arises from the path-space large deviations of the slow process if the fast process is kept fixed at $z$. The map $\pi \mapsto \cI(x,\pi)$ is the Donsker-Varadhan rate function of the fast process when the slow process is kept fixed at $x$. The variational formula arises from an averaging procedure or a principle eigenvalue problem that supersedes the averaging principle for the limiting dynamics. Indeed, the averaging principle can be recovered by choosing $p = 0$, after which the optimal $\pi$ turns out to be the unique $\pi_x$ such that $\cI(x,\pi_x) = 0$.
	
	Due to the variational form of $\cH$, establishing uniqueness of the Hamilton-Jacobi-Bellman equation in this context is a serious challenge as was pointed out by \cite{BuDuGa18}. In \cite{FK06} the uniqueness was established by an argument that allowed the reduction to the Hamilton-Jacobi equation in terms of $V$. This however was carried out in the context of quadratic Hamiltonians only, which therefore restricts the method to diffusion processes.
	
	\smallskip
	
	Our companion paper \cite{KrSc19} established uniqueness for the Hamilton-Jacobi-Bellman equations for a much wider class of operators of the type \eqref{eqn:Hamiltonian_intro}. In this paper, we use this result to obtain a large deviation principle with a similarly wide scope. To focus on the new ideas in this framework, we restrict ourselves to the setting where the state-space of the fast process is compact. We expect an extension beyond this case to be possible under the correct assumptions, but this will introduce additional complications that divert from the main message of the text.

	\smallskip
	
	We proceed our introduction with an example that illustrates our main results, Theorems \ref{theorem:LDP_general} and \ref{theorem:LDP_general_Lagrangian_two_represenatations}. We then discuss our setting and the main methods. 
	
	\subsection{An illustrating example}
	Consider the process
	\begin{equation*}
		\dd X_n(t)   = F(X_n(t)) \dd t + \frac{1}{\sqrt{n}} \, \dd B(t).
	\end{equation*}
	As $n \rightarrow \infty$, we have $X_n \rightarrow x$, where $x$ solves 
	\begin{equation} \label{eqn:solve_lim_differential_eq_intro}
		\dot{x}(t) = F(x(t)).
	\end{equation}
	Freidlin-Wentzell theory extends this statement and gives speed of decay for `non-typical' trajectories~$\rho$,
	\begin{equation*}
		\PR[X_n \approx \rho] \asymp e^{-nJ(\rho)},
	\end{equation*}
	where $J(x) = 0$ for $x$ solving \eqref{eqn:solve_lim_differential_eq_intro}. In addition, $J$ is given in terms of the speed:
	\begin{equation*}
		J(\rho) = \int_0^\infty \frac{1}{2} \left|\dot{\rho}(t) - F(\rho(t))\right|^2 \dd t.
	\end{equation*}
	Now, consider the process $(X_n(t),Z_n(t))$, where $X_n(t)$ is a diffusion process as above
	\begin{equation*}
		\dd X_n(t)  = F(X_n(t),Z_n(t)) \dd t + \frac{1}{\sqrt{n}} \, \dd B(t),
	\end{equation*}	
	and where $Z_n(t)$ is a jump process on $\{1,2\}$ switching its state at rate $n$.  As $Z_n(t)$ equilibrates at the measure $\frac{1}{2}(\delta_{1} + \delta_2)$, we have $X_n \rightarrow x$ where
	\begin{equation*}
		\dot{x}(t) = \overline{F}(x(t)) \dd t, \qquad \overline{F}(x) = \frac{1}{2} \left( F(x,1) + F(x,2)\right). 
	\end{equation*}
	When we consider the heuristics regarding the large deviation behaviour of $X_n(t)$, we observe the following two facts.
	
	\begin{itemize}
		\item We have Donsker-Varadhan large deviations for the occupation measures of $Z_n$:
		\begin{gather*}
			\PR\left[ \int_0^1 \delta_{Z_n(t)} \dd t \approx \pi \right] \asymp \exp \left\{-n I(\pi) \right\}, \\
			\cI(\pi) = - \inf_{\phi >> 0} \int \frac{A\phi}{\phi} \, \dd \pi, 
		\end{gather*}
		and where $A$ is the generator of a jump process that switches between the states $1$ and $2$ with rate $1$:
		\begin{equation*}
			A\phi(1) = \phi(2) - \phi(1), \qquad A\phi(2) = \phi(1) - \phi(2).
		\end{equation*}
		\item Suppose that by a large deviation $Z_n(t)$ remains stationary at some law $\pi = (\pi(1),\pi(2))$. Conditional on this event, we can compute the large deviation rate that $t \mapsto X_n(t)$ moves at speed $v_1$ while $Z_n$ is in state $1$ and at speed $v_2$ while $Z_n$ is in state $2$. Freidlin-Wentzel theory tells us that this probability is asymptotically given by
		\begin{multline*}
			\exp \left\{- \frac{n}{2} \left(\int_{t : Z_n(t) =1} \left( v_1 - F(X_n(t),1)\right)^2 \dd t \right. \right. \\
			\left. \left. + \int_{t : Z_n(t) =2 } \left( v_2 - F(X_n(t),2)\right)^2 \dd t \right) \right\}.
		\end{multline*}
		Due to the stationarity of the $Z_n$ process in $\pi$, we can rewrite the integral as
		\begin{equation*}
			\frac{1}{2} \int_0^\infty \pi(1)  \left( v_1 - F(X_n(t),1)\right)^2 + \pi(2) \left( v_2 - F(X_n(t),2)\right)^2 \dd t.
		\end{equation*}
		The process $t \mapsto X_n(t)$ effectively has speed $v = \pi(1) v_1 + \pi(2) v_2$.
	\end{itemize}
	We thus see that the Freidlin-Wentzell and Donsker-Varadhan large deviations compete at the same scale and leave room for 2 optimizations. To produce a speed $v$ for the process $X_n$, we can first choose the measure $\pi$ over which the fast process equilibrates, and afterwards we can assign a speed $v_1$ to produce while $Z_n$ is in $1$ and a speed $v_2$ while $Z_n$ is in state $2$ in such a way that $v = \pi(1)v_1 + \pi(2) v_2$.
	This leads to a large deviation principle
	\begin{equation*}
		\PR[X_n \approx \rho] \asymp e^{-n J(\rho)}
	\end{equation*}
	where $J$ is obtained by optimizing over the possible choices for the stationary background measure $\nu$ for $Z_n(t)$:
	\begin{equation} \label{eqn:intro_variational_Lagrangian2}
		J(\rho) =  \int_0^\infty \inf_{\substack{\pi, v_1,v_2 \\ \pi(1)v_1 + \pi(2) v_2 = \dot{\rho}(t)}} \left\{\frac{1}{2} \int \left|v_i - F(\rho(t),i))\right|^2 \pi(\dd z) + \cI(\pi)  \right\}\dd t.
	\end{equation}
	The above argument, except for the quadratic form of the Lagrangian appearing in the rate function and the form of the generator $A$ of the fast process, is completely general. The key components of the heuristic hinge on the path-space large deviation principle of the slow process and the Donsker-Varadhan large deviations of the fast process.
	\subsection{Aim of this note: a general framework for two time-scale Markov processes}
	We consider sequences of processes $(X_n(t),Z_n(t))$ with generators
	\begin{equation*}
		A_n f(x,z) = \left(A^\mathrm{slow}_{n,z} f(\cdot,z)\right)(x) + n \left(A^\mathrm{fast}_x f(x,\cdot)\right)(z). 
	\end{equation*}
	and suppose the following:
	\begin{enumerate}[(a)]
		\item If we fix $Z_n(t)$ at $z$, then we have path-space large deviations at speed $n$ for the process $X_n(t)$. \label{item:intro_LDP_paths}
		\item If we fix $X_n(t)$ at $x$, then we have Donsker-Varadhan at speed $n$ large deviations for $Z_n(t)$. \label{item:intro_LDP_DV}
	\end{enumerate}	
	This paper focuses on two goals: identifying sufficient conditions on top of~\ref{item:intro_LDP_paths} and~\ref{item:intro_LDP_DV} that lead to a large deviation principle for the processes $X_n(t)$, and establishing a Lagrangian rate function as in \eqref{eqn:intro_variational_Lagrangian2} in terms of an optimization procedure over measures that involves ergodic averages of the fast process.
	Our approach is based on the semigroup method by \cite{FK06}, see also \cite{Kr20,Kr19}. This approach is further detailed in Section~\ref{section:outline_of_proof} below.
	The main analytical challenge in carrying out the semigroup method is the well-posedness of associated Hamilton-Jacobi-Bellman equations. The equations arising in the context of slow-fast systems have been treated in our companion paper~\cite{KrSc19}. In this paper we therefore focus on the probabilistic context and the reduction to the Hamilton-Jacobi-Bellman equation treated in \cite{KrSc19}.

	\subsection{A case-study: Large deviations for weakly interacting jump processes}
	
	To show the applicability of our two main results, Theorems \ref{theorem:LDP_general} and \ref{theorem:LDP_general_Lagrangian_two_represenatations}, we work out their application in the context of weakly interacting jump processes.
	These Markovian jump models are frequently consulted as approximations to physical models describing non-equilibrium phenomena, such as spin dynamics. An overview involving different spin models is offered for instance in~\cite{Ma99}. A typical example is the Glauber dynamics in Ising-models and Potts-models describing ferromagnets. Other fields of applications include communication networks~\cite{AnFrRoTi06}, game theory with models involving a large number of agents~\cite{GoMoRi10}, and chemical reactions~\cite{MiPaPeRe17}. 
	\smallskip
	
	There is huge interest in the study of Markovian mean-field jump processes from a path-space large-deviation perspective. Recent studies include large deviations of the empirical densities with more than one jump simultaneously via the weak convergence method~\cite{DuRaWu16}; the stability of the nonlinear limit evolution equation of the particle system by constructing Lyapunov functions from relative entropies~\cite{BuDuFiRa15a}; large deviations of density-flux pairs exploiting Girsanov transformations~\cite{Re18,PaRe19} and from a Hamilton-Jacobi point of view, including deterministic time-periodic rates, in \cite{Kr17}; and for the empirical measure of weakly interacting particles, without their fluxes, weakly coupled to a fast Markov jump process~\cite{YaSu20}. Finally, Donsker-Varadhan large deviations for the empirical-measure combined with the fluxes and modulated by deterministic time-periodic rates are treated in~\cite{BeChFaGa18}. 
	
	\smallskip
	
	To our knowledge, there are no results on the dynamic large deviation principles for the empirical measure-flux pair of mean-field interacting particles coupled to fast external processes. In our work, we fill this gap. To make the example concrete, we choose a diffusion process as the external process. The arguments can however also be carried out for a fast jump process, which extends the results of~\cite{YaSu20}.
	
	\smallskip

	\subsection{The novelties of this paper}
	
	We summarize the main novelties presented in this note.
	\begin{enumerate}[label=(\roman*)]
		\item We prove pathwise large deviations of slow components in general slow-fast systems via Hamilton-Jacobi theory. The large deviation principle can be reduced to well-posedness of a class of Hamilton-Jacobi-Bellman equations for which well-posedness was established in \cite{KrSc19}.
		\item We prove that the rate functions are of action-integral form, that means given by a time-integral over the so-called Lagrangian. Next to the standard characterization of the Lagrangian in terms of the dual to a principal eigenvalue, we establish a characterization in terms of a double-optimization. We use the latter characterization to derive averaging principles directly from the large-deviation results.
		\item As our main example, we treat density-flux large deviations of mean-field interacting particles on a finite state space coupled to fast drift-diffusion processes on a compact space. This example requires arguments that go beyond those currently available in the literature.
	\end{enumerate}
	Our large-deviation results also apply to small-diffusion processes coupled to fast jump processes. This solves a challenge pointed out in~\cite{BuDuGa18}, which is the fact that in slow-fast systems, classical results about comparison principles not applicable due to the Hamiltonians having poor regularity properties. In order to streamline this paper, we do not work out this example here; the key assumptions of our main theorems have been verified in \cite{KrSc19}.
	
	\smallskip
	
	The rest of this paper is organized as follows. 
	In Section \ref{section:general_setting}, we start with preliminaries and introduce our main context: the \textit{slow-fast} system. In Section~\ref{section:main_results} we give our two main results: Theorem \ref{theorem:LDP_general} and \ref{theorem:LDP_general_Lagrangian_two_represenatations}. We also give the application of these two results to the context of weakly interacting jump processes. In Section \ref{section:strategy_proof} we give the strategy of the proof of the main results. We work out the proof in Sections \ref{section:comparison} and \ref{section:variational_representation}. 
	In Section~\ref{SF:sec:mean-field-fast-diffusion}, we establish that our main results can indeed be applied in the context of weakly interacting jump processes.
	\section{General setting} \label{section:general_setting}
	In this section, we start with some preliminary definitions, after which we introduce general slow-fast systems as a  class of two-component Markov processes $(X_n(t),Z_n(t))$ taking values in $E_n \times F$. The first component~$X_n(t)$ corresponds to the slow process, the second component~$Z_n(t)$ to the fast process.  
	\subsection{Preliminaries} \label{section:SF:preliminaries}
	For a Polish space $\cX$, we denote by $C(\cX)$ and $C_b(\cX)$ the spaces of continuous and bounded continuous functions respectively. If $\cX \subseteq \bR^d$ then we denote by $C_c^\infty(\cX)$ the space of smooth functions that vanish outside a compact set in $\cX$. We denote by $C_{cc}^\infty(\cX)$ the set of smooth functions that are constant outside of a compact set, and by $\cP(\cX)$ the space of probability measures on $\cX$. We equip $\cP(\cX)$ with the weak topology, that is, the one induced by convergence of integrals against bounded continuous functions.
	We denote $\bR^+ = [0,\infty)$. $D_\cX(\bR^+)$ is the Skorokhod space of trajectories that are right-continuous and have left limits. We equip this space with its usual Skorokhod topology, see \cite{EK86}.
	
	\smallskip
	
	We assume that $E$ is a closed convex subset of $\bR^d$ which is contained in the $\bR^d$ closure of its $\bR^d$ interior. This ensures that gradients of functions on $E$ are determined by their values on $E$. $E$ serves as the state-space of our slow process. We furthermore assume that $F$ is a compact metric space which serves as the state-space of the fast process.

	As a final technical definition, we introduce the space $\cA\cC(E)$ of absolutely continuous paths in $E$. A curve $\gamma: [0,T] \to E$ is absolutely continuous if there exists a function $g \in L^1([0,T],\bR^d)$ such that for $t \in [0,T]$ we have $\gamma(t) = \gamma(0) + \int_0^t g(s) \dd s$. We write $g = \dot{\gamma}$.\\
	A curve $\gamma: \bR^+ \to E$ is absolutely continuous, i.e. $\gamma \in \cA\cC(E)$, if the restriction to $[0,T]$ is absolutely continuous for every $T \geq 0$. 
	
	\smallskip
	
	We proceed with the main definitions of the theory of large deviations.

	\begin{definition}
		Let $\{X_n\}_{n \geq 1}$ be a sequence of random variables on a Polish space $\mathcal{X}$. Furthermore, consider a function $I : \mathcal{X} \rightarrow [0,\infty]$ and a sequence $\{r_n\}_{n \geq 1}$ of positive numbers such that $r_n \rightarrow \infty$. We say that
		\begin{itemize}
			\item  
			the function $I$ is a \textit{good rate-function} if the set $\{x \, | \, I(x) \leq c\}$ is compact for every $c \geq 0$.
			%
			\item 
			the sequence $\{X_n\}_{n\geq 1}$ satisfies the \textit{large deviation principle} with speed $r_n$ and good rate-function $I$ if for every closed set $A \subseteq \mathcal{X}$, we have 
			\begin{equation*}
				\limsup_{n \rightarrow \infty} \, \frac{1}{r_n} \log \PR[X_n \in A] \leq - \inf_{x \in A} I(x),
			\end{equation*}
			and, for every open set $U \subseteq \mathcal{X}$, 
			\begin{equation*}
				\liminf_{n \rightarrow \infty} \, \frac{1}{r_n} \log \PR[X_n \in U] \geq - \inf_{x \in U} I(x).
			\end{equation*}
		\end{itemize}
	\end{definition}
	
	Next, we introduce Markov processes via the martingale problem. Let $A : \cD(A) \subseteq C_b(\cX) \rightarrow C_b(\cX)$ be a linear operator.
	
	\begin{definition}
		Let $\mu \in \cP(\cX)$. We say that a measure $\PR \in \cP(D_\cX(\bR^+))$ solves \textit{the martingale problem} for $(A,\mu)$ if for all $f \in \cD(A)$ the process
		\begin{equation*}
			M_f(t) := f(X(t)) - f(X(0)) - \int_0^t Af(X(s)) \dd s
		\end{equation*}
		is a martingale with respect to the filtration $t \mapsto \cF_t := \left\{X(s) \, | \, s \leq t\right\}$, and if the projection of $\PR$ on the time $0$ coordinate equals $\mu$.
		
		We write that $\PR \in \cP(D_\cX(\bR^+))$ solves the martingale problem for $A$ if it solves the martingale problem for $(A,\mu)$ for some $\mu$. Finally, we say that the process $\{X(t)\}_{t \geq 0}$ on $D_\cX(\bR^+)$ solves the martingale problem for $A$ if its distribution solves the martingale problem.
		
		We say that the martingale problem is \textit{well-posed} if there exists a unique solution to the martingale problem for each starting measure.
	\end{definition}

	\subsection{Stochastic slow-fast systems}
	\label{subsection:slow_fast_systems}

	We proceed with introducing the general context of a slow-fast system. We start of by introducing the state-spaces for slow and fast components, after which we introduce the slow-fast system as the solution to a suitable martingale problem.
	
	\smallskip
	
	To focus only on the features that arise due to the coupling of slow and fast variables, we will assume that the fast process $Z_n(t)$ takes values in a compact Polish space $F$. This compactness assumption, as well as the fact that $F$ does not depend on $n$, can both be relaxed at the cost of more, but non-trivial, arguments~\cite{Gh18}. 
	
	\smallskip
	
	Regarding the state-spaces of the sequence of slow processes we allow for a changing sequence of spaces. This occurs for example in the context of interacting jump processes as in Section \ref{SF:sec:mean-field-fast-diffusion} on $\{1,\dots,q\}$.
	In such a context $E_n = \{1,\dots,q\}^n$, $E = \cP(\{1,\dots,q\} \subseteq \bR^q$  and $E_n$ naturally embeds into $E$ by the map $\eta_n : E_n \rightarrow E$ that takes the configuration $\vec{x} = (x_1,\dots,x_n) \in E_n$ to the empirical measure $\frac{1}{n} \sum_{i=1}^n \delta_{x_i}$.
	
	\smallskip
	
	More generally, we assume that the slow process $X_t^n$ takes values in Polish spaces $E_n$ such that  $\eta_n(E_n) \subseteq E \subseteq \mathbb{R}^d$, where $\eta_n : E_n \to E$ is a continuous embedding and $E$ is a Polish space as well. We assume that $E$ is contained in the $\mathbb{R}^d$-closure of its $\mathbb{R}^d$-interior, which ensures that gradients of functions on $E$ are determined by the values of the function in $E$. The setting of the state spaces is summarized in the following basic condition.

	\begin{condition}[Basic condition on the state spaces $E_n$ and $F$]
		\label{condition:compact_setting:state-spaces}
		The state space $F$ is a compact Polish space. The state spaces $E_n$ are Polish spaces that are asymptotically dense in $E \subseteq \mathbb{R}^d$ with respect to continuous embeddings $\eta_n : E_n \to E$; that means for any $x \in E$, there exist $x_n \in E_n$ such that $\eta_n(x_n) \to x$ as $n \to \infty$. Furthermore, we assume that for each compact $K \subseteq E$ the set $\eta_n^{-1}(K)$ is compact in $E_n$ and that there exists a compact set $\widehat{K} \subseteq E$ such that 
		\begin{equation*}
			K \subseteq \liminf_n \eta^{-1}_n(\widehat{K}).
		\end{equation*}
		The last condition means that for every compact	$K \subseteq E$ there is a compact set $\widehat{K} \subseteq E$ such that for all $x \in K$ there is an increasing map $k : \bN\rightarrow \bN$ and $x_{k(n)} \in \eta_{k(n)}^{-1}(\widehat{K})$ such that $\lim_n \eta_{k(n)}(x_{k(n)}) = x$.
	\end{condition}
	We will speak of a slow-fast system when the sequence of processes $t \mapsto (Y_n(t),Z_n(t))$ solve the martingale problem for a operators $A_n$ that decompose into a `slow' and a `fast' part whose timescale separation tends to infinity.
	
	\begin{definition}[Generator of a slow-fast system] \label{definition:slow_fast_system}
		We say that a sequence of linear operators $A_n : \mathcal{D}(A_n) \subseteq C_b(E_n \times F) \to C_b(E_n \times F)$ corresponds to a slow-fast system if $A_n$ is given by
		\begin{equation}\label{eq:setting:generator-slow-fast-system}
			A_n f(y,z) := A^\mathrm{slow}_{n,z} f(\cdot,z) (y) +
			r_n \cdot A^\mathrm{fast}_{n,y} f(y,\cdot)(z),
		\end{equation}
		where 
		\begin{enumerate}[label = (\roman*)]
			\item $r_n$ is a sequence of positive real numbers such that $r_n \rightarrow \infty$.
			\item For each $z \in F$ and $n = 1,2,\dots$, there is a generator 
			\begin{equation*}
				A^\mathrm{slow}_{n,z} : \mathcal{D}(A^\mathrm{slow}_{n}) \subseteq C_b(E_n) \to C_b(E_n)
			\end{equation*}
			of an $E_n$-valued Markov process $Y^n_t$. The domain of $A^{\mathrm{slow}}_{n,z}$ is independent of $z$, and we denote it by $\mathcal{D}(A^\mathrm{slow}_n)$. For all $f\in \cD(A_n)$, we have $f(\cdot,z) \in \cD(A^\mathrm{slow}_n)$.
			\item For each $y \in E_n$, there is a generator
			\begin{equation*}
				A^\mathrm{fast}_{n,y} :  \mathcal{D}(A^\mathrm{fast}_{n,y}) \subseteq C(F) \to C(F)
			\end{equation*}
			of a Markov process on $F$. The domain of $A^\mathrm{fast}_{n,y}$ is independent of $n$ and $y$, and we denote it by $\mathcal{D}(A^\mathrm{fast})$. For all $f \in \cD(A_n)$, we have $f(y,\cdot) \in \mathcal{D}(A^\mathrm{fast})$.
		\end{enumerate}
	\end{definition}
	For a sequence of slow-fast systems constructed from operators $A_n$ defined as above, we make the following well-posedness assumption regarding solvability of the associated martingale problem.
	\begin{condition}[Well-posedness of martingale problem]
		\label{condition:compact_setting:well-posedness-martingale-problem}
		Consider a slow-fast system constructed from operators $A_n$ as in Definition \ref{definition:slow_fast_system}.	
		For each $n \in \mathbb{N}$ and each initial distribution $ \mu \in \mathcal{P}(E_n \times F)$, existence and uniqueness hold for the $(A_n,\mu)$-martingale problem on the Skorohod-space $D_{E_n \times F}[0,\infty)$. Denote the Markov process solving the martingale problem by $(Y_n(t),Z_n(t))$. The mapping $(y,z) \mapsto P^n_{y,z}$ of $E_n \times F$ into $\mathcal{P}(D_{E_n \times F}[0,\infty))$ is continuous with respect to the weak topology on $\mathcal{P}(D_{E_n \times F}[0,\infty))$, where $P^n_{y,z}$ is the distribution of the Markov process $(Y_n(t),Z_n(t))$ starting at $(y,z)$.
	\end{condition}

	\section{Main results} \label{section:main_results}

	We start off in Section \ref{sec:results:general-framework} with our two main results: path-space large deviations for general slow-fast systems, and an action-integral representation of the rate function. In addition to the conditions on the decomposition of the state-space and processes in a slow and fast component, we need various additional conditions that imply large deviations for both parts separately and weak-dependence on each other. We state and discuss these additional conditions in Section \ref{section:assumptions}. We give a main application of our general result in the context of weakly interacting jump processes in Section \ref{SF:sec:mean-field-fast-diffusion}.

	\subsection{A general framework for two time-scale Markov processes}
	\label{sec:results:general-framework}

	We consider a slow-fast system $t \mapsto (Y_n(t),Z_n(t))$ corresponding to the generators $A_n$ with decomposition
	\begin{equation*}
		A_n f(y,z) := A^\mathrm{slow}_{n,z} f(\cdot,z) (y) +
		r_n \cdot A^\mathrm{fast}_{n,y} f(y,\cdot)(z),
	\end{equation*}
	as in Conditions \ref{condition:compact_setting:state-spaces} and \ref{condition:compact_setting:well-posedness-martingale-problem}. Clearly, such a bare context is not sufficient to obtain large deviations for the slow-fast system. At the bare minimum, we need to impose additional assumptions that are sufficient to establish large deviations for the two components separately.
	
	We will thus make various assumptions on the limits $V_{x,p}(z)$ and $A_{x}^\mathrm{fast}$ of the operators $A_{n,z}^\mathrm{slow}$ and $A_{n,y}^\mathrm{fast}$. These assumptions will essentially imply:
	\begin{itemize}
		\item Path-space large deviations for the slow processes if the fast process is frozen.
		\item Donsker-Varadhan large deviations for the fast process if the slow process is frozen.
		\item `Regularity' of these two large deviation principles in variations of the frozen parameter.
	\end{itemize}
	Even though these assumptions seem to be minimal and natural, their discussion is quite lengthy. We therefore post-pone their discussion to Section \ref{section:assumptions} below. The following theorem is proven in Sections~\ref{section:strategy_proof} and~\ref{section:comparison}.
	\begin{theorem}[Large deviations]\label{theorem:LDP_general}
		Let Condition \ref{condition:compact_setting:state-spaces} be satisfied and let~$(Y_n,Z_n)$ be the slow-fast system corresponding to the generators $A_n$ as in Condition \ref{condition:compact_setting:well-posedness-martingale-problem}. Denote $X_n = \eta_n(Y_n)$.
		Suppose that the large deviation principle holds for~$X_n(0)$ on~$E$ with speed~$r_n$ and good rate function~$\mathcal{J}_0$.
		In addition suppose Assumptions~\ref{assumption:compact_setting:convergence-of-nonlinear-generators}, \ref{assumption:compact_setting:convergence-of-nonlinear-generators-external}, \ref{assumption:compact_setting:eigen_value}, \ref{assumption:results:regularity_of_V} and ~\ref{assumption:results:regularity_I} are satisfied.  
		
		Then the large deviation principle holds with speed~$r_n$ for the process~$X_n$ on~$D_E(\mathbb{R}^+)$ with good rate function~$J$ given in \eqref{eqn:LDP_rate2}. 
	\end{theorem}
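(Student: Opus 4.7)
My plan is to execute the Feng-Kurtz semigroup / Hamilton-Jacobi scheme for large deviations of Markov processes, reducing Theorem~\ref{theorem:LDP_general} to the comparison principle for a Hamilton-Jacobi-Bellman equation whose well-posedness was established in \cite{KrSc19}. The first step is to pass to the nonlinear generator
\[
H_n f := \frac{1}{r_n} e^{-r_n f} A_n e^{r_n f} = \frac{1}{r_n} e^{-r_n f} A^{\mathrm{slow}}_{n,z} e^{r_n f(\cdot,z)} + e^{-r_n f} A^{\mathrm{fast}}_{n,y} e^{r_n f(y,\cdot)},
\]
which inherits the slow-fast splitting of $A_n$. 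The assumed convergence of nonlinear generators (Assumption~\ref{assumption:compact_setting:convergence-of-nonlinear-generators} on the slow part and Assumption~\ref{assumption:compact_setting:convergence-of-nonlinear-generators-external} on the fast part) gives pointwise limits: for $f(y,z) = f_0(\eta_n(y)) + \tfrac{1}{r_n}\psi(y,z)$, the slow piece tends to $V_{x, \nabla f_0(x)}(z)$ and the fast piece to a nonlinear operator of Donsker-Varadhan type in the $z$-variable, with the prefactor $r_n$ rescaling the fluctuations of $\psi$ so that both contributions appear at the same order.

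Because the fast component is accelerated by $r_n$, the limit must be obtained by eliminating the $z$-dependence through an averaging/eigenvalue argument. Here I would invoke Assumption~\ref{assumption:compact_setting:eigen_value}: for each $(x,p)$ the operator $g \mapsto V_{x,p}\cdot g + B^{\mathrm{fast}}_x g$ admits a principal eigenvalue $\mathcal{H}(x,p)$ with a sufficiently regular eigenfunction. By the Donsker-Varadhan duality this principal eigenvalue has precisely the variational form
\[
\mathcal{H}(x,p) = \sup_{\pi \in \mathcal{P}(F)}\left\{\int V_{x,p}(z)\,\pi(\dd z) - \mathcal{I}(x,\pi)\right\}.
\]
The role of the eigenfunction is to serve as a $z$-dependent perturbation of the test function, producing upper and lower bounding operators $H_\dagger, H_\ddagger$ (in the $\subLIM$/$\superLIM$ sense) that approximate $\mathcal{H}$ and are compatible with the viscosity solution machinery.

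With $H_\dagger$ and $H_\ddagger$ in place, I apply the Feng-Kurtz abstract LDP theorem. The probabilistic inputs are exponential tightness of $\{X_n\}$ on $D_E(\mathbb{R}^+)$, which follows from compactness of $F$ together with the compact containment supplied by Assumption~\ref{assumption:results:regularity_of_V}, and the LDP for $X_n(0)$ with rate $\mathcal{J}_0$. The analytic input is the comparison principle for the resolvent equation $f - \lambda \mathcal{H}f = h$, and this is exactly what \cite{KrSc19} delivers under the regularity Assumptions~\ref{assumption:results:regularity_of_V} and~\ref{assumption:results:regularity_I} on $V_{x,p}$ and $\mathcal{I}(x,\pi)$. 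Uniqueness of viscosity solutions identifies the limit semigroup $V(t)$ and yields the path-space LDP with speed $r_n$ and a good rate function expressed through $V(t)$.

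The final step is to rewrite this semigroup rate function in action-integral form \eqref{eqn:LDP_rate2} by the standard variational/control-theoretic identification $\mathcal{L}(x,v) = \sup_p\{\langle p, v\rangle - \mathcal{H}(x,p)\}$, yielding $J(\gamma) = \mathcal{J}_0(\gamma(0)) + \int_0^\infty \mathcal{L}(\gamma(s),\dot{\gamma}(s))\,\dd s$ on $\mathcal{AC}(E)$ and $+\infty$ otherwise; this is largely bookkeeping once the semigroup characterization is in hand. The main obstacle is not the abstract Feng-Kurtz machinery or the closing Legendre transform, but the passage from the prelimit operators $H_n$ to the variational Hamiltonian $\mathcal{H}$: one must construct perturbed test functions of the form $f + r_n^{-1}\psi_{x,p}$, where $\psi_{x,p}$ is the logarithm of the principal eigenfunction, and verify that the resulting $H_\dagger$ and $H_\ddagger$ fall into the class covered by \cite{KrSc19}. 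This is the step where the weak-coupling structure, rather than strong coupling, is decisive, and where Assumptions~\ref{assumption:compact_setting:eigen_value}--\ref{assumption:results:regularity_I} are used in an essential way.
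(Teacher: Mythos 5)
Your overall route is the same as the paper's: pass to the nonlinear generators $H_n$, use Assumptions~\ref{assumption:compact_setting:convergence-of-nonlinear-generators} and~\ref{assumption:compact_setting:convergence-of-nonlinear-generators-external} with perturbed test functions $f\circ\eta_n + r_n^{-1}\phi$ to obtain a (multivalued) limit operator, prove exponential compact containment from the containment function in Assumption~\ref{assumption:results:regularity_of_V}, use the approximate principal eigenfunction of Assumption~\ref{assumption:compact_setting:eigen_value} to pass between the limit operator and the variational Hamiltonian $\cH$, import the comparison principle for the $H_\dagger,H_\ddagger$ pair from \cite{KrSc19}, and conclude via the abstract Feng--Kurtz-type theorem (Theorem~\ref{theorem:abstract_LDP}). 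Two caveats. First, your closing step misreads what is being claimed: \eqref{eqn:LDP_rate2} is \emph{not} the action-integral form but the projective-limit expression $J(\gamma)=J_0(\gamma(0))+\sup_k\sup_{t_i}\sum_i J_{t_i-t_{i-1}}(\gamma(t_i)\,|\,\gamma(t_{i-1}))$ built from the limit semigroup $V(t)$, and it is delivered directly by Theorem~\ref{theorem:abstract_LDP}; no Legendre-transform rewriting is needed for Theorem~\ref{theorem:LDP_general}. Conversely, the action-integral representation is emphatically not ``largely bookkeeping'': it is a separate result (Theorem~\ref{theorem:LDP_general_Lagrangian_two_represenatations}), requires the additional boundary Assumption~\ref{assumption:Hamiltonian_vector_field} (which Theorem~\ref{theorem:LDP_general} does not assume), and in the paper occupies all of Section~\ref{section:variational_representation}, resting on the variational resolvent of \cite{KrSc19}, resolvent-to-semigroup approximation, and convex-analysis arguments. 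Second, your statement that the comparison principle for $f-\lambda\cH f=h$ is ``exactly what \cite{KrSc19} delivers'' compresses a nontrivial step: \cite{KrSc19} gives comparison for the pair $H_\dagger,H_\ddagger$ built from $\bfH$, and one must still transfer sub- and supersolutions from the actual limit operator $H$ (whose images live in $C_b(E\times F)$ and depend on the auxiliary $\phi$) to $H_\dagger,H_\ddagger$; in the paper this is the chain $H\to H_1,H_2\to H_\dagger,H_\ddagger$ of Lemmas~\ref{lemma:viscosity_solutions_compactify1} and~\ref{lemma:viscosity_solutions_arrows_based_on_eigenvalue}, which needs both the containment-function truncation and the auxiliary Lemma~\ref{lemma:strong_viscosity_solutions} guaranteeing that the subsolution inequality can be tested at an arbitrary maximizing point. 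You gesture at this (eigenfunction-perturbed test functions, ``verify that $H_\dagger,H_\ddagger$ fall into the class covered by \cite{KrSc19}''), so the idea is present, but these lemmas are where the actual work of the proof lies and they should be spelled out rather than treated as part of the abstract machinery.
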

	
	The next theorem works out \eqref{eqn:LDP_rate2} in two forms under an additional assumption on the behaviour of $V_{x,p}$ on the boundary of $E$.

	\begin{theorem}[Action-integral representation]\label{theorem:LDP_general_Lagrangian_two_represenatations}
		In addition to the assumptions of Theorem~\ref{theorem:LDP_general}, suppose that Assumption~\ref{assumption:Hamiltonian_vector_field} is satisfied. Then the rate function~$J:D_E(\mathbb{R}^+)\to[0,\infty]$ can be written in action-integral form,
		\begin{equation*}
			J(\gamma) = \begin{cases}
				J_0(\gamma(0)) + \int_0^\infty \cL(\gamma(s),\dot{\gamma}(s)) \dd s & \text{if } \gamma \in \cA\cC, \\
				\infty & \text{otherwise},
			\end{cases}
		\end{equation*}
		where the Lagrangian~$\mathcal{L}$ admits the following two representations:
		\begin{enumerate}[label=(\roman*)]
			\item The map $v\mapsto \mathcal{L}(x,v)$ is the Legendre dual of the principal eigenvalue~$\mathcal{H}(x,p)$ of the operator $V_{x,p}+A^\mathrm{fast}_x$. In other words: $\mathcal{L}(x,v) = \sup_{p}\ip{p}{v} -\mathcal{H}(x,p)$,
			where the Hamiltonian~$\mathcal{H}(x,p)$ is given by
			\begin{equation}
				\label{eqn:variational_Hamiltonian}
				\cH(x,p) = \sup_{\pi \in \cP(F)} \left\{\int V_{x,p}(z) \, \pi(\dd z) - \cI(x,\pi) \right\},
			\end{equation}
			where $V_{x,p}(z)$ is the internal Hamiltonian from Assumption~\ref{assumption:compact_setting:convergence-of-nonlinear-generators}  and~$\mathcal{I}(x,\cdot):\mathcal{P}(F)\to[0,\infty]$ is the Donsker-Varadhan functional given by
			\begin{equation} \label{eqn:def_DV_functional}
				\cI(x,\pi) = - \inf_{\substack{\phi \in \cD(A^\mathrm{fast}) \\ \inf \phi > 0}} \int \frac{A_x^\mathrm{fast} \phi(z)}{\phi(z)} \, \pi(\dd z).
			\end{equation}
			\item The map~$\mathcal{L}$ is given by
			\begin{multline*}
				\cL(x,v) = \inf\left\{ \int \cL_z(x,w(z)) \, \nu(\dd z) + \cI(x,\nu) \, \middle| \, \nu \in \cP(F), \right. \\
				\left. w : E \rightarrow \bR^d \text{ $\nu$-integrable and } \int w(z) \, \nu(\dd z) = v  \right\},
			\end{multline*}
			where
			\begin{equation*}
				\cL_z(x,v) = \sup_p \ip{p}{v} - V_{x,p}(z).
			\end{equation*}
		\end{enumerate} 
	\end{theorem}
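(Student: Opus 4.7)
The proof splits naturally into three parts: the action-integral form of $J$, the principal-eigenvalue representation (i), and the double-optimization representation (ii). The first is essentially the standard Feng--Kurtz variational reduction. Once Theorem \ref{theorem:LDP_general} has produced $J$ from the Hamilton--Jacobi machinery with a Hamiltonian $\mathcal{H}$ that is convex and lower semicontinuous in $p$, and once the boundary/vector-field control in Assumption \ref{assumption:Hamiltonian_vector_field} prevents finite-cost paths from leaving $E$, one concludes that $J$ is finite only on absolutely continuous curves and that $J(\gamma) = J_0(\gamma(0)) + \int_0^\infty \mathcal{L}(\gamma(s),\dot\gamma(s)) \dd s$, where $\mathcal{L}(x,v) := \sup_p\{\langle p,v\rangle - \mathcal{H}(x,p)\}$.

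For representation (i), note that by construction $\mathcal{H}(x,p)$ is the principal eigenvalue of the Feynman--Kac perturbation $V_{x,p} + A^{\mathrm{fast}}_x$. The classical Donsker--Varadhan variational formula for the principal eigenvalue of such an operator yields exactly
\begin{equation*}
\mathcal{H}(x,p) = \sup_{\pi \in \mathcal{P}(F)}\left\{\int V_{x,p}(z)\,\pi(\dd z) - \mathcal{I}(x,\pi)\right\},
\end{equation*}
with $\mathcal{I}(x,\cdot)$ as in \eqref{eqn:def_DV_functional}. Combined with the Legendre-dual definition of $\mathcal{L}$, this is (i).

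The main analytical step is the derivation of (ii) from (i). Substituting the variational form of $\mathcal{H}$ into $\mathcal{L}(x,v) = \sup_p\{\langle p,v\rangle - \mathcal{H}(x,p)\}$ gives
\begin{equation*}
\mathcal{L}(x,v) = \sup_p \inf_{\pi \in \mathcal{P}(F)}\left\{\langle p,v\rangle - \int V_{x,p}(z)\,\pi(\dd z) + \mathcal{I}(x,\pi)\right\}.
\end{equation*}
The plan is to swap $\sup_p$ and $\inf_\pi$ by Sion's minimax theorem: $\mathcal{P}(F)$ is weakly compact since $F$ is compact by Condition \ref{condition:compact_setting:state-spaces}, the integrand is concave in $p$ (as $V_{x,p}$ is convex in $p$), and it is convex and lower semicontinuous in $\pi$ (linearity of $\pi \mapsto \int V_{x,p}\,\dd\pi$ together with the standard convexity and lower semicontinuity of the Donsker--Varadhan functional, cf.\ Assumption \ref{assumption:results:regularity_I}). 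After the swap, the inner supremum is the Legendre transform at $v$ of the map $p \mapsto \int V_{x,p}\,\dd\pi$; by Rockafellar's interchange theorem for normal convex integrands applied to $(z,p)\mapsto V_{x,p}(z)$, this equals
\begin{equation*}
\inf\left\{\int \mathcal{L}_z(x,w(z))\,\pi(\dd z) \;\middle|\; w\ \pi\text{-integrable},\ \int w\,\dd\pi = v\right\}.
\end{equation*}
Substituting back produces (ii).

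The principal obstacle is the rigorous justification of these two interchanges. Sion's minimax requires continuity of $\pi \mapsto \int V_{x,p}\,\dd\pi$ together with lower semicontinuity of $\mathcal{I}(x,\cdot)$; the former follows from continuity of $V_{x,p}$ in $z$ granted by Assumption \ref{assumption:results:regularity_of_V}, the latter from the standard properties of the Donsker--Varadhan functional. Rockafellar's interchange requires $(z,p) \mapsto V_{x,p}(z)$ to be a normal convex integrand, which again reduces to regularity of $V_{x,p}$ in $(z,p)$. A secondary technical concern is that Rockafellar's duality only yields $\mathcal{L}$ as the double infimum in (ii) once one knows that the auxiliary function $h(v) := \inf\{\int \mathcal{L}_z(x,w(z))\,\pi(\dd z) : \int w\,\dd\pi = v\}$ coincides with its biconjugate, i.e.\ that $h$ is convex and lower semicontinuous in $v$; convexity is immediate from convexity of $\mathcal{L}_z$ in its velocity argument, while lower semicontinuity should follow from the coercivity of $\mathcal{L}_z$ built into the principal-eigenvalue structure of $\mathcal{H}$.
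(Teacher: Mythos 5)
Your outline for representation (ii) follows the same broad strategy as the paper (Sion's minimax after noting convexity of $\pi \mapsto \cI(x,\pi)$, then a convex-duality identification of the inner Legendre transform), but the step you delegate to ``Rockafellar's interchange theorem'' is exactly where the real work lies, and as written it is a gap. The interchange theorem computes the conjugate of an integral functional over a \emph{decomposable} function space; what is needed is that the conjugate of the restriction to constant momenta, $\cL_\pi(x,v)=\sup_p\{\ip{p}{v}-\int V_{x,p}\,\dd\pi\}$, equals the marginal function $h(v)=\inf\{\int\cL_z(x,w(z))\,\pi(\dd z):\int w\,\dd\pi=v\}$. Your own reduction shows this amounts to $h=h^{**}$, i.e.\ lower semicontinuity of $h$, which you leave at ``should follow from coercivity''. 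The paper never proves lsc of the marginal: in Proposition~\ref{proposition:representation_of_Lagrangian} it establishes the equality on $\text{rel.\,int.\,dom}\,\cL_\pi(x,\cdot)$ by exhibiting an optimal momentum $p^*\in\partial_v\cL_\pi(x,v)$ and then a measurable selection $w^*(z)\in\partial_p V_{x,p^*}(z)$ with $\int w^*\,\dd\pi=v$, using \cite[Theorem 3]{Pa97} with $\varepsilon=0$ (Lemma~\ref{lemma:constant_momentum}), and then extends from the relative interior to the whole domain via \cite[Corollary 7.3.4]{Ro70}. If you insist on the biconjugacy route you must actually prove lsc of $h$ (e.g.\ uniform superlinearity of $\cL_z(x,\cdot)$ from continuity of $V_{x,\cdot}$ on the compact $F$, Dunford--Pettis, weak lsc of convex integrands); as it stands the central analytic step of (ii) is asserted, not proved, and subdifferentiability/attainment genuinely fails at boundary points of the domain, which is why the paper's argument is split into a relative-interior step and a separate boundary step.

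The first part of the statement is also not ``the standard Feng--Kurtz reduction'' applied as a black box. Theorem~\ref{theorem:LDP_general} produces $J$ in the form \eqref{eqn:LDP_rate2}, i.e.\ through the semigroup $V(t)$ attached to the abstract, multivalued operator $H$ whose images live on $E\times F$; the substance of the paper's proof is the identification of $V(t)$ with the variational semigroup $\bfV(t)$ built from $\cL$. This is done by constructing the variational resolvent $\bfR(\lambda)h$ for $\bfH$ (Proposition~\ref{proposition:variational_resolvent}, which is where Assumption~\ref{assumption:Hamiltonian_vector_field} enters: it is used to verify Assumption 2.17 of \cite{KrSc19}, bootstrapped from $V$ to $\Lambda(x,p,\pi)=\int V_{x,p}\,\dd\pi$ via \cite[Theorem 3]{Pa97}, rather than to ``prevent finite-cost paths from leaving $E$''), then using the sub-/supersolution transfers of Figure~\ref{figure:Hamiltonians} and the comparison principle to conclude $R(\lambda)h=\bfR(\lambda)h$, upgrading equality of resolvents to equality of semigroups via strict density and sequential continuity, and finally invoking the argument of \cite[Theorem 8.14]{FK06} together with convexity of $v\mapsto\cL(x,v)$. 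Your proposal omits the identification $R(\lambda)=\bfR(\lambda)$ entirely, yet without it there is no link between the abstract rate function \eqref{eqn:LDP_rate2} and the Lagrangian dual to $\cH$; similarly, in the general framework part (i) is not derived from the ``classical Donsker--Varadhan formula'' but is the definition \eqref{eqn:variational_Hamiltonian} combined with the eigenvalue Assumption~\ref{assumption:compact_setting:eigen_value}, which is what licenses the transfer of subsolutions from $H_1$ to $H_\dagger$.
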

	
	\begin{remark}
		The assumption that $E$ is closed and convex is only used to establish the integral representation in Theorem \ref{theorem:LDP_general_Lagrangian_two_represenatations}. We can imagine that other methods to obtain this result in different contexts are available, see also the discussion following Assumption 2.17 in \cite{KrSc19}.
	\end{remark}

	\subsection{Assumptions for general theorems} \label{section:assumptions}
	Here we formulate the precise assumptions of the general large-deviation theorems given in Section~\ref{sec:results:general-framework}. Our proof is based on the connection between large deviations and Hamilton-Jacobi equations as first introduced by Feng and Kurtz. We explain this method as well as state the key results in this method in Section \ref{section:strategy_proof}. 
	
	\smallskip
	
	In this general framework, one has to check various assumptions for the specific models under consideration. Our contribution is to translate these assumptions to verifiable assumptions in the two-scale context. The assumptions naturally cluster in three groups, where each group corresponds to a main step in the proof:
	\begin{enumerate}
		\item Convergence of non-linear generators.
		\item Comparison principle of the limiting Hamilton-Jacobi-Bellman equation.
		\item Action-integral form of the rate function.
	\end{enumerate}

	\subsubsection{Assumptions for the convergence of non-linear generators}
	
	Our first two assumptions refer to the convergence of the internal and external generators. We state  them separately.
	
	\begin{assumption}[Convergence of internal non-linear generators]	\label{assumption:compact_setting:convergence-of-nonlinear-generators}
		Let $D_0$ be a linear space such that $C_c^\infty(E) \subseteq D_0 \subseteq C_b^1(E)$ and such that
		\begin{itemize}
			\item for any $n$, $z \in F$ and $f \in D_0$, $z \in F$ we have $e^{r_n f} \in \cD(A_{n,z}^\mathrm{slow})$ and
			\begin{equation*}
				\sup_{n}\sup_{x \in E_n,z \in F} \left|\frac{1}{r_n} e^{-r_nf(x,z)} A_{n,z}^\mathrm{slow} e^{r_nf(\cdot,z)(x)} \right| < \infty;
			\end{equation*}
			\item there exist continuous functions $V_{x,p} : F \to \mathbb{R}$, where $x \in E$  and $p \in \mathbb{R}^d$, such that for any $f \in D_0$ and all compact sets $K \subseteq E$ we have
			\begin{equation*}
				\sup_{ x \in \eta_n^{-1}(K), z \in F} \left|
				\frac{1}{r_n} e^{-r_nf(x,z)} A_{n,z}^\mathrm{slow} e^{r_nf(\cdot,z)(x)} - V_{\eta_n(x),\nabla f(\eta_n(x))}(z)
				\right| \to 0.
			\end{equation*}
		\end{itemize}
	\end{assumption}

	\begin{assumption}[Convergence of external non-linear generators]
		\label{assumption:compact_setting:convergence-of-nonlinear-generators-external}
		The external generators $A^\mathrm{fast}_{n,y}$ depend on $y \in E_n$ such that for any $x \in E$, $x_n \in E_n$ such that $\eta_n(x_n) \rightarrow x$ and $g_n \in \mathcal{D}(A^\mathrm{fast}) \subseteq C(F)$  with $g_n \to g$ uniformly on $F$, we have $A^\mathrm{fast}_{n,x_n}g_n \to A^\mathrm{fast}_x g$ uniformly on $F$.  For all $\phi \in \mathcal{D}(A^\mathrm{fast})$ we have $\sup_n \sup_{y \in E_n} \vn{e^{-\phi(z)} \left(A^\mathrm{fast}_{n,y} e^{-\phi}\right)(z)} < \infty$.
	\end{assumption}
	With the above two assumptions, we will obtain a limit operator~$H$ defined in terms of a graph~$H\subseteq C_b(E)\times C_b(E\times F)$. The precise definition of~$H$ is given further below in the proofs. 
	
	\subsubsection{Assumptions for the comparison principle of the limiting Hamilton-Jacobi-Bellman equation.}
	
	We proceed with three analytical assumptions. 
	
	\smallskip
	
	The first assumption is a mixing condition for the fast process and essentially implies that we have large deviations for the occupation time measures of the fast process when the slow process remains fixed. This mixing condition allows us to relate the Hamiltonian obtained from the limiting procedure to the Hamiltonian $\cH(x,p)$ of
	\eqref{eqn:variational_Hamiltonian} in Theorem \ref{theorem:LDP_general}.

	

	\smallskip

	\begin{assumption}[Principal-eigenvalue problem] 
		\label{assumption:compact_setting:eigen_value}
		
		The operators $A^\mathrm{fast}_x$ satisfy the following.
		\begin{enumerate}[label=($\cE$\arabic*)]
			\item \label{item:assumption:PI:domain} For any $g > 0$ in $\cD(A^\mathrm{fast})$ and $\varepsilon \in (0,1)$, we have $g^{1-\varepsilon} \in \cD(A^\mathrm{fast})$.
			\item \label{item:assumption:PI:solvePI} For all $x \in E$ and any $p \in \mathbb{R}^d$, the principal-eigenvalue problem for the operator $V_{x,p} + A^\mathrm{fast}_x$ is well-posed. 
			
			That means, for every $\delta > 0$ there exists a strictly positive function $g \in \cD(A^\mathrm{slow})$  on $F$ such that
			\begin{equation*}
				\sup_{z \in F} \left| 	\left( V_{x,p}(z) + A^\mathrm{fast}_x \right) g(z) - \cH(x,p) g(z) \right| \leq \delta,
			\end{equation*}
			where $V_{x,p}(z)$ acts by multiplication.
		\end{enumerate}
	\end{assumption}
	
	\begin{remark}
		Part~\ref{item:assumption:PI:domain} 
		is satisfied if the domain is closed under composition with smooth functions, and holds true for our example.
		Part~\ref{item:assumption:PI:solvePI} is satisfied when there exists a positive function $g$ in the domain of $L_x$ such that $(V_{x,p}+L_x)g=\mathcal{H}(x,p)g$. In examples, this principal-eigenvalue problem is usually well-posed under standard regularity assumptions on coefficients, as a consequence of the Krein-Rutman theorem.
	\end{remark}

	
	\smallskip
	
	
	The following two assumptions are imposed on~$V$ and~$\mathcal{I}$ in order to verify the comparison principle for Hamilton-Jacobi-Bellman equations involving the Hamiltonian~$\mathcal{H}(x,p)$.
	Roughly speaking, the continuity estimate implies path-space large deviation principles for the slow process when the fast process is frozen. We start introducing terminology that is motivated by the results in \cite{KrSc19}.

	\begin{definition}[Penalization function]\label{def:results:good_penalization_function}
		We say that $\Psi : E^2 \rightarrow [0,\infty)$ is a \textit{ penalization function} if $\Psi \in C^1(E^2)$ and if $x = y$ if and only if $\Psi(x,y) = 0$.
	\end{definition}
	\begin{definition}[Containment function]\label{def:results:compact-containment}
		We say that a function $\Upsilon : E \rightarrow [0,\infty]$ is a \textit{containment function} for $\Lambda$ if $\Upsilon \in C^1(E)$ and there is a constant $c_\Upsilon$ such that
		\begin{itemize}
			\item For every $c \geq 0$, the set $\{x \, | \, \Upsilon(x) \leq c\}$ is compact;
			\item We have $\sup_z \sup_x V_{x,\nabla \Upsilon(x)}(z) \leq c_\Upsilon$.
		\end{itemize}	
	\end{definition}

	We proceed with the continuity estimate which we will later employ for $\cG(x,p,\pi) = \int V_{x,p}(z) \pi(\dd z)$.

	\begin{definition}[Continuity estimate] \label{def:results:continuity_estimate}
		Let  $\Psi$ be a penalization function and let $\cG: E \times \mathbb{R}^d\times\cP(F) \rightarrow \bR$, $(x,p,\pi)\mapsto \cG(x,p,\pi)$ be a function. Suppose that for each $\varepsilon > 0$, there is a sequence of positive real numbers $\alpha \rightarrow \infty$. For sake of readability, we suppress the dependence on $\varepsilon$ in our notation.
		
		Suppose that for each $\varepsilon$ and $\alpha$ we have variables $(x_{\varepsilon,\alpha},y_{\varepsilon,\alpha})$ in $E^2$ and measures $\pi_{\varepsilon,\alpha}$ in $\cP(F)$. We say that this collection is \textit{fundamental} for $\cG$ with respect to $\Psi$ if:
		\begin{enumerate}[label = (C\arabic*)]
			\item \label{item:def:continuity_estimate:1} For each $\varepsilon$, there are compact sets $K_\varepsilon \subseteq E$ and $\widehat{K}_\varepsilon\subseteq \cP(F)$ such that for all $\alpha$ we have $x_{\varepsilon,\alpha},y_{\varepsilon,\alpha} \in K_\varepsilon$ and $\pi_{\varepsilon,\alpha}\in\widehat{K}_\varepsilon$.
			\item \label{item:def:continuity_estimate:2} 
			For each $\varepsilon > 0$, we have $\lim_{\alpha \rightarrow \infty} \alpha \Psi(x_{\varepsilon,\alpha},y_{\varepsilon,\alpha}) = 0$. For any limit point $(x_\varepsilon,y_\varepsilon)$ of $(x_{\varepsilon,\alpha},y_{\varepsilon,\alpha})$, we have $\Psi(x_{\varepsilon},y_{\varepsilon}) = 0$.
			\item \label{item:def:continuity_estimate:3} We have for all $\varepsilon > 0$
			\begin{align} 
				& \sup_{\alpha} \cG\left(y_{\varepsilon,\alpha}, - \alpha (\nabla \Psi(x_{\varepsilon,\alpha},\cdot))(y_{\varepsilon,\alpha}),\pi_{\varepsilon,\alpha}\right) < \infty, \label{eqn:control_on_Gbasic_sup} \\
				& \inf_\alpha \cG\left(x_{\varepsilon,\alpha}, \alpha (\nabla \Psi(\cdot,y_{\varepsilon,\alpha}))(x_{\varepsilon,\alpha}),\pi_{\varepsilon,\alpha}\right) > - \infty. \label{eqn:control_on_Gbasic_inf} 	
			\end{align} \label{itemize:funamental_inequality_control_upper_bound}
			
			In other words, the operator $\cG$ evaluated in the proper momenta is eventually bounded from above and from below.
		\end{enumerate}
		We say that $\cG$ satisfies the \textit{continuity estimate} if for every fundamental collection of variables we have for each $\varepsilon > 0$ that
		\begin{multline}\label{equation:Xi_negative_liminf}
			\liminf_{\alpha \rightarrow \infty} \cG\left(x_{\varepsilon,\alpha}, \alpha (\nabla \Psi(\cdot,y_{\varepsilon,\alpha}))(x_{\varepsilon,\alpha}),\pi_{\varepsilon,\alpha}\right) \\
			- \cG\left(y_{\varepsilon,\alpha}, - \alpha (\nabla \Psi(x_{\varepsilon,\alpha},\cdot))(y_{\varepsilon,\alpha}),\pi_{\varepsilon,\alpha}\right) \leq 0.
		\end{multline}
	\end{definition}

	The continuity estimate essentially states that we have the comparison principle for the Hamilton-Jacobi equation for $(x,p) \mapsto \int V_{x,p}(z) \pi(\dd z)$ `uniformly' over $\cP(F)$. This can be rigorous if we also assume the existence of a containment function $\Upsilon$ and assume appropriate continuity and convexity of $V$. This is a key part of the following assumption.


	\begin{assumption}\label{assumption:results:regularity_of_V}
		The function $V$ from Assumption \ref{assumption:compact_setting:convergence-of-nonlinear-generators} satisfies the following.
		\begin{enumerate}[label=($V$\arabic*)]
			\item \label{item:assumption:slow_regularity:continuity} For every $(x,p)$ we have $V_{x,p} \in C(F)$ and the map $(x,p) \mapsto V_{x,p}$ is continuous on $C(F)$ for the supremum norm.
			\item \label{item:assumption:slow_regularity:convexity} For any $x \in E$ and $z \in F$, we have that $p \mapsto V_{x,p}(z)$ is convex. Furthermore, we have $V_{x,0}(z) = 0$ for all $x,z$.
			\item \label{item:assumption:slow_regularity:compact_containment} There exists a continuous containment function $\Upsilon : E \to [0,\infty)$ in the sense of Definition~\ref{def:results:compact-containment}.
			\item \label{item:assumption:slow_regularity:controlled_growth} 
			For every compact set $K \subseteq E$, there exist constants $M, C_1, C_2 \geq 0$  such that for all $x \in K$, $p \in \mathbb{R}^d$ and all $z_1,z_2\in F$,
			\begin{align*}
				V_{x,p}(z_1) \leq  \max\left\{M,C_1 V_{x,p}(z_2) + C_2\right\}.
			\end{align*}
			\item \label{item:assumption:slow_regularity:continuity_estimate} The function $\Lambda(x,p,\pi) := \int V_{x,p}(z) \, \pi(\dd z)$ satisfies the continuity estimate.
		\end{enumerate} 
	\end{assumption}
	
	\begin{remark}
		Conditions~\ref{item:assumption:slow_regularity:continuity} and~\ref{item:assumption:slow_regularity:convexity} will follow from the convergence assumption on the slow generators. We state them nevertheless to clarify the connection to~\cite{KrSc19}.
	\end{remark}
	
	\begin{remark}
		All the results of this paper also hold when the present continuity estimate is replaced by the version of Appendix \ref{section:continuity_estimate_general}. This is occasionally helpful for complicated Hamiltonians.
	\end{remark}
	
	We also assume basic regularity properties for the Donsker-Varadhan rate function $\cI$.
	For a compact set~$K\subseteq E$ and a constant~$M\geq 0$, write
	\begin{equation} \label{eqn:def:sublevelsets_I}
		\Theta_{K,M}:= \bigcup_{x \in K} \left\{\pi\in\cP(F) \, \middle| \,  \mathcal{I}(x,\pi) \leq M \right\},
	\end{equation}
	and
	\begin{equation}
		\Omega_{K,M}:= \bigcap_{x \in K} \left\{\pi \in \cP(F) \, \middle| \,  \mathcal{I}(x,\pi) \leq M \right\}.
	\end{equation}

	\begin{assumption}\label{assumption:results:regularity_I}
		The functional $\mathcal{I}:E\times\cP(F) \to [0,\infty]$ in~\eqref{eqn:def_DV_functional} satisfies the following.
		\begin{enumerate}[label=($\mathcal{I}$\arabic*)]
			\item \label{item:assumption:I:lsc} The map $(x,\pi) \mapsto \mathcal{I}(x,\pi)$ is lower semi-continuous on $E \times \cP(F)$.
			\item \label{item:assumption:I:zero-measure} For any $x\in E$, there exists a measure $\pi_x^0 \in \cP(F)$ such that $\mathcal{I}(x,\pi_x^0) = 0$. 
			\item \label{item:assumption:I:compact-sublevelsets}  For any compact set $K \subseteq E$ and constant $M$ the set $\Theta_{K,M}$ is compact in $\cP(F)$.
			\item \label{item:assumption:I:finiteness} For each $x \in E$, compact subset $K \subseteq \cP(F)$, there is an open neighbourhood $U \subseteq E$ of $x$ and constants $M',C_1',C_2' \geq 0$ such that for all $y \in U$ and $\pi \in K$ we have
			\begin{equation*}
				\cI(y,\pi) \leq \max \left\{M', C_1'\cI(x,\pi) + C_2' \right\}.
			\end{equation*}
			\item \label{item:assumption:I:equi-cont} For every compact set $K \subseteq E$ and each $M \geq 0$ the collection of functions $\{\cI(\cdot,\pi)\}_{\pi \in \Omega_{K,M}}$ is equicontinuous. That is: for all $\varepsilon > 0$, there is a $\delta > 0$ such that for all $\pi \in \Omega_{K,M}$ and $x,y \in K$ such that $d(x,y) \leq \delta$ we have $|\mathcal{I}(x,\pi) - \mathcal{I}(y,\pi)| \leq \varepsilon$.
		\end{enumerate}
	\end{assumption}
	
	These assumptions are always satisfied for continuous and bounded $\cI$, but are also satisfied for much more elaborate functionals, like the one appearing in Section \ref{SF:sec:mean-field-fast-diffusion}. Condition~\ref{item:assumption:I:lsc} follows if the maps~$x\mapsto A_x^\mathrm{fast}\phi$ are continuous as a function of~$E$ to~$C(F)$ equipped with the supremum norm. Conditions~\ref{item:assumption:I:zero-measure} and~\ref{item:assumption:I:compact-sublevelsets} are always satisfied by the compactness assumption on~$F$. Again, we state these conditions to make the connection to~\cite{KrSc19} as clear as possible.
	
	\subsubsection{Assumptions for the action-integral form of the rate function}
	
	We shall assume some regularity of the Hamiltonian flow close to the boundary. 
	The assumption that follows can be dropped if there is no boundary, e.g. if  $E = \bR^d$. 
	In our context, $\cH$ is obtained from the functions $V_{x,p}$ and $\cI$. We translate Assumption 2.17 of \cite{KrSc19} to one on $V$.
	\begin{definition} \label{definition:tangent_cone}
		The tangent cone (sometimes also called \textit{Bouligand cotingent cone}) to $E$ in $\bR^d$ at $x$ is
		\begin{equation*}
			T_E(x) := \left\{z \in \bR^d \, \middle| \, \liminf_{\lambda \downarrow 0} \frac{d(y + \lambda z, E)}{\lambda} = 0\right\}.
		\end{equation*}
	\end{definition}
	Let $\Phi : \bR^d \rightarrow (-\infty,\infty]$ be convex. 
	Then the subdifferential set of $\Phi$ is defined by
	\begin{equation} \label{eqn:subdifferential}
		\partial_p \Phi(p_0) := \left\{
		\xi \in \mathbb{R}^d \,:\, \Phi(p) \geq \Phi(p_0) + \xi \cdot (p-p_0) \quad (\forall p \in \mathbb{R}^d)
		\right\}.
	\end{equation}
	\begin{assumption} \label{assumption:Hamiltonian_vector_field}
		The set~$E$ is closed and convex. The map $V : E \times \bR^d \times F \rightarrow \bR$ of Assumption \ref{assumption:compact_setting:convergence-of-nonlinear-generators} is such that $\partial_p V_{x,p_0}(z) \subseteq T_E(x)$ for all $p_0$, $x$ and $z$.
	\end{assumption}

	In~\cite{KrSc19}, this assumption is made on $\Lambda(x,p,\pi) = \int V_{x,p}(z) \pi(\dd z)$ instead of the integrand~$V_{x,p}(z)$. This property bootstraps from $V$ to $\Lambda$ as we will see in Proposition \ref{proposition:variational_resolvent}. 
	

	\section{Strategy of the proof---the Hamilton-Jacobi approach to large deviations} \label{section:strategy_proof}
	
	A key role in the proof of our large deviation theorems is played by Hamilton-Jacobi (-Bellman) equations. This connection was first established by \cite{FK06} and reproved with new arguments in \cite{Kr19,Kr20}. An outline of the key steps in this argument is given in Section \ref{section:outline_of_proof}.
	
	A key role is played by viscosity solutions to certain Hamilton-Jacobi equations and their convergence. We introduce these concepts first.

	\subsection{Preliminaries}

	We next introduce viscosity solutions for the Hamilton-Jacobi equation with Hamiltonians like $\mathcal{H}(x,p)$ of our introduction. The notion of viscosity solutions is built up out of the notion of a sub- and supersolutions. For later flexibility, we will introduce two Hamilton-Jacobi equations instead of one and define sub- and supersolutions for the two equations respectively. These definitions are fairly technical and can be skipped until the moment they are needed.
	
	Let $C_u(E)$ be the space of continuous functions that have an upper bound, let $C_l(E)$ be the space of continuous functions with a lower bound.

	\begin{definition}[Viscosity solutions and comparison principle] \label{definition:viscosity_solutions}
		Let $B_1 \subseteq C_l(E) \times C(E \times F)$ and $B_2 \subseteq C_u(E) \times C(E \times F)$ be two operators, $\lambda > 0$ and $h_1,h_2 \in C_b(E)$. Consider the Hamilton-Jacobi equations
		\begin{align}
			f - \lambda B_1 f & = h_1, \label{eqn:differential_equation1}  \\
			f - \lambda B_2 f & = h_2. \label{eqn:differential_equation2} 
		\end{align}
		We say that $u$ is a \textit{(viscosity) subsolution} of equation \eqref{eqn:differential_equation1} if $u$ is bounded, upper semi-continuous and if, for all $(f,g) \in B_1$ there exists a sequence $(x_n,z_n) \in E\times F$ such that
		\begin{gather*}
			\lim_{n \uparrow \infty} u(x_n) - f(x_n)  = \sup_x u(x) - f(x), \\
			\lim_{n \uparrow \infty} u(x_n) - \lambda g(x_n,z_n) - h_1(x_n) \leq 0.
		\end{gather*}
		We say that $v$ is a \textit{(viscosity) supersolution} of equation \eqref{eqn:differential_equation2} if $v$ is bounded, lower semi-continuous and if, for every $(f,g) \in B_2$ there exists a sequence $(x_n,z_n) \in E\times F$ such that
		\begin{gather*}
			\lim_{n \uparrow \infty} v(x_n) - f(x_n)  = \inf_x v(x) - f(x), \\
			\lim_{n \uparrow \infty} v(x_n) - \lambda g(x_n,z_n) - h_2(x_n) \geq 0.
		\end{gather*}
		We say that $u$ is a \textit{(viscosity) solution} of equations \eqref{eqn:differential_equation1} and \eqref{eqn:differential_equation2} if it is a subsolution to \eqref{eqn:differential_equation1} and a supersolution to \eqref{eqn:differential_equation2}.
		
		We say that \eqref{eqn:differential_equation1} and \eqref{eqn:differential_equation2} satisfies the \textit{comparison principle} if for every subsolution $u$ to \eqref{eqn:differential_equation1} and supersolution $v$ to \eqref{eqn:differential_equation2}, we have $\sup_x u(x) - v(x) \leq \sup_x h_1(x) - h_2(x)$.
	\end{definition}

	\begin{remark}
		We recover the usual definition of viscosity solutions in terms of an operator $B$ on $C_b(E)$ if $B:= B_1 = B_2$ and if the operator $B$ is single-valued and its images do not depend on $F$. In this case we write $Bf = g$ if and only if $(f,g) \in B$.
	\end{remark}

	\begin{remark} \label{remark:existence of optimizers}
		Consider the context of the previous remark. Consider the definition of subsolutions. Suppose that the testfunction $f \in \cD(B)$ has compact sublevel sets, then instead of working with a sequence $x_n$, there exists $x_0  \in E$ such that
		\begin{gather*}
			u(x_0) - f(x_0)  = \sup_x u(x) - f(x), \\
			u(x_0) - \lambda B f(x_0) - h(x_0) \leq 0.
		\end{gather*}
		A similar simplification holds in the case of supersolutions.
	\end{remark}

	We next turn to the convergence of a sequence of functions on different spaces and the derived concept of an extended limit of operators.
	
	\begin{definition}
		Let $f_n \in C_b(E_n \times F)$ and $f \in C_b(E \times F)$. We say that $\LIM f_n = f$ if 
		\begin{itemize}
			\item $\sup_n \vn{f_n} < \infty$,
			\item for all compact $K \subseteq E$, we have
			\begin{equation*}
				\lim_{n \rightarrow \infty} \sup_{y \in \eta_n^{-1}(K) \times F} \left|f_n(y,z) - f(\eta_n(y),z) \right| = 0.
			\end{equation*}
		\end{itemize}
	\end{definition}
	
	\begin{definition}
		Let $B_n \subseteq C_b(E_n \times F) \times C_b(E_n\times F)$. Define $ex-\LIM B_n$ as the set
		\begin{multline*}
			ex-\LIM B_n \\
			= \left\{(f,g) \in C_b(E \times F)^2 \, \middle| \, \exists \, (f_n,g_n) \in B_n: \, \LIM f_n = f, \LIM g_n = g \right\}.
		\end{multline*}
	\end{definition}
	
	\begin{definition}
		We say that a sequence of functions $f_n \in C_b(E)$ converges strictly to $f \in C_b(E)$ if $\sup_n \vn{f_n} < \infty$ and if $f_n$ converges to $f$ uniformly on compacts. See \cite{Se72} for a topological treatment of the strict topology.
	\end{definition}

	\subsection{Outline of the proof} 
	\label{section:outline_of_proof}

	The framework of results below can either be obtained via \cite[Theorem 7.18]{FK06} or via \cite{Kr20}. The key result that we will use is Theorem 7.10 of \cite{Kr20}, which is based on the following argument.

	\begin{itemize}
		\item Given exponential tightness of the processes $X_n$, it suffices to establish the large deviations of the finite dimensional distributions.
		\item Using Brycs theorem and the Markov property, large deviations for the finite dimensional distributions follow from large deviations at time $0$ and the convergence $V_n(t) \rightarrow V(t)$ of the conditional generating functions 
		\begin{equation*}
			V_n(t)f(y) := \frac{1}{r_n} \log \bE\left[e^{r_n f(Y_n(t))} \, \middle| \, Y_n(0) = y\right].
		\end{equation*}
		\item The generating functions $V_n(t)$ form a non-linear operator semigroup. Following classical theory of semigroups, the convergence of convergence of these semigroups follows from the convergence of their non-linear generators 
		\begin{gather*}
			\cD(H_n) := \left\{f \in C_b(E_n \times F) \, \middle| \, e^{r_nnf} \in \cD(A_n) \right\}, \\
			Hf(x) = \frac{1}{r_n} e^{-r_n f} A_n e^{r_n f}
		\end{gather*}
		to some operator $H$ of which it needs to be shown that it generates a semigroup $V(t)$.
		\item As $H$ is non-linear, classical methods to show that $H$ generates a semigroup fail. We thus resort to viscosity methods. The sufficient condition under which we have this property is the comparison principle for the Hamilton-Jacobi equation in terms of $H$.
	\end{itemize}
	
	To make all of this rigorous, we additionally introduce the resolvents $"R_n(\lambda) = (\bONE - \lambda H_n)^{-1}"$ of the operators $H_n$:
	\begin{multline*}
		R_n(\lambda)h(x) := \\
		\sup_{\bQ \in \cP(D_{E_n \times F}(\bR^+))} \left\{ \int_0^\infty \lambda^{-1} e^{-\lambda^{-1}t} \left( \int h(X(t)) \bQ(\dd X) - \frac{1}{r_n} S_t(\bQ \, | \, \PR_x ) \right) \dd t\right\}. 
	\end{multline*}

	Before giving the key result of \cite{Kr20}, we further give a weakened exponential tightness property.
	
	\begin{definition}
		Consider the context of Conditions \ref{condition:compact_setting:state-spaces} and \ref{condition:compact_setting:well-posedness-martingale-problem}. We say that the processes $(Y_n(t),Z_n(t))$ satisfy the \textit{exponential compact containment condition} at speed $r_n$ if for each compact set $K \subseteq E$, $T >0$ and $a > 0$ there is a compact set $\widehat{K} = \widehat{K}(K,T,a) \subseteq E$ such that 
		\begin{equation*}
			\limsup_{n \rightarrow \infty} \sup_{(y,z) \in \eta_n^{-1}(K) \times F} \frac{1}{r_n} \log P_{y,z}\left[Y_n(t) \notin \eta_n^{-1}(\widehat{K}) \text{ for some } t \in [0,T] \right] \leq - a.
		\end{equation*}	
	\end{definition}

	\begin{theorem}[Adaptation of Theorem 7.10 of \cite{Kr20} to our context] \label{theorem:abstract_LDP}
		Suppose that we are in the setting of Conditions \ref{condition:compact_setting:state-spaces} and \ref{condition:compact_setting:well-posedness-martingale-problem} and that the exponential compact containment condition holds.
		
		Denote $X_n = \eta_n(Y_n)$. Suppose that
		\begin{enumerate}
			\item \label{item:LDP_abstract_initialLDP} The large deviation principle holds for $X_n(0) = \eta_n(Y_n(0))$ with speed $r_n$ and good rate function $J_0$.
			\item \label{item:LDP_abstract_exptight} The processes $X_n = \eta_n(Y_n)$ are exponentially tight on $D_E(\bR^+)$ with speed $r_n$.
			\item \label{item:LDP_abstract_convergenceHn} There is an operator $H \subseteq C_b(E) \times C_b(E \times F)$ such that $H \subseteq ex-\LIM H_n$.
			\item \label{item:LDP_abstract_comparison} For all $h \in C_b(E)$ and $\lambda > 0$ the comparison principle holds for $f - \lambda Hf = h$.
		\end{enumerate}
		Then there are two families of operators $R(\lambda) : C_b(E) \rightarrow C_b(E)$, $\lambda > 0$ and $V(t) : C_b(E) \rightarrow C_b(E)$, $t \geq 0$, such that 
		\begin{itemize}
			\item There is a sequentially strictly dense set $D \subseteq C_b(E)$ such that for each $t > 0$ and $f \in D$, we have
			\begin{equation} \label{eqn:convergence_resolvent_to_semigroup_abstract_LDP}
				\lim_{m \rightarrow \infty} \vn{R\left(\frac{t}{m}\right)^m f  - V(t)f  } = 0.
			\end{equation}
			\item If $\lambda > 0$ and $\LIM h_n = h$, then $\LIM R_n(\lambda) h_n = R(\lambda) h$;
			\item For $\lambda > 0$ and $h \in C_b(E)$, the function $R(\lambda)h$ is the unique function that is a viscosity solution to $f - \lambda H f = h$;
			\item If $\LIM f_n = f$ and $t_n \rightarrow t$ we have $\LIM V_n(t_n)f_n = V(t)f$.
		\end{itemize}
		In addition, the processes $X_n = \eta_n(Y_n)$ satisfy a large deviation principle on $D_E(\bR^+)$ with speed $r_n$ and rate function
		\begin{equation} \label{eqn:LDP_rate2}
			J(\gamma) = J_0(\gamma(0)) + \sup_{k \geq 1} \sup_{\substack{0 = t_0 < t_1 < \dots, t_k \\ t_i \in \Delta_\gamma^c}} \sum_{i=1}^{k} J_{t_i - t_{i-1}}(\gamma(t_i) \, | \, \gamma(t_{i-1})).
		\end{equation}
		Here $\Delta_\gamma^c$ is the set of continuity points of $\gamma$. The conditional rate functions $I_t$ are given by
		\begin{equation*}
			J_t(y \, | \, x) = \sup_{f \in C_b(E)} \left\{f(y) - V(t)f(x) \right\}.
		\end{equation*}
	\end{theorem}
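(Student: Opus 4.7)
The theorem is framed as an adaptation of \cite[Theorem 7.10]{Kr20}, so the plan is to follow the Feng--Kurtz--Kraaij blueprint and verify that our hypotheses translate correctly. I would organise the proof into three layers: a resolvent layer, a semigroup layer, and a path-space layer.

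First, the resolvent layer. Starting from the pre-limit nonlinear resolvents $R_n(\lambda)$, I would show that any $\LIM$-cluster point $u$ of the sequence $R_n(\lambda) h_n$, with $\LIM h_n = h$, is a viscosity solution of $f - \lambda H f = h$. On the subsolution side, after perturbing the test function by $\varepsilon \Upsilon$ (using the containment function from Assumption \ref{assumption:results:regularity_of_V} to force compactness in $E$), one picks an optimizer $(x_\infty, z_\infty) \in E \times F$ of $u - f - \varepsilon \Upsilon$; compactness in the $F$-direction is automatic since $F$ is compact. The resolvent identity $R_n(\lambda) h_n - \lambda H_n R_n(\lambda) h_n = h_n$ is then passed to the limit using $H \subseteq ex-\LIM H_n$ (hypothesis (3)), producing the required subsolution inequality. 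The supersolution side is dual, and the comparison principle (hypothesis (4)) then forces uniqueness of the cluster point; this defines $R(\lambda) h$ as the unique viscosity solution.

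Second, the semigroup layer. With $R(\lambda)$ in hand, I would invoke the Crandall--Liggett-style exponential formula to construct $V(t) f := \lim_m R(t/m)^m f$ on a sequentially strictly dense $D \subseteq C_b(E)$ and extend $V(t)$ to a semigroup of strict contractions on $C_b(E)$. Passing the same formula through the pre-limit yields $\LIM V_n(t_n) f_n = V(t) f$ whenever $\LIM f_n = f$ and $t_n \to t$. Since $V_n(t) f(y) = r_n^{-1} \log \bE_y \! \left[ e^{r_n f(X_n(t))} \right]$, this is exactly the Varadhan--Bryc input one needs: combined with the initial LDP (hypothesis (1)) and the Markov property, Bryc's theorem yields the LDP for all finite-dimensional distributions of $X_n$ with the rate function expressed via $V(t)$. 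Exponential tightness (hypothesis (2)) then upgrades this to a full LDP on $D_E(\bR^+)$ with rate function \eqref{eqn:LDP_rate2}, via the standard concatenation over partitions of continuity points; the conditional rate functions $J_t(y \mid x)$ are precisely the Legendre transforms of $V(t)$ in the final-time argument, which is how they emerge from Bryc's theorem applied to the two-point distributions.

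\emph{Main obstacle.} Every ingredient above is essentially mechanical given our hypotheses; the one place requiring genuine adaptation of \cite{Kr20} is the viscosity identification in the resolvent layer, because the limit operator $H$ takes values in $C_b(E \times F)$ rather than $C_b(E)$. This forces the two-variable sub/supersolution formalism of Definition \ref{definition:viscosity_solutions} and careful tracking of the $z$-argument throughout the doubling-of-variables perturbation. Compactness of $F$ makes pulling the $F$-optimizer through the $\LIM$ limit painless, so the adaptation follows \cite{Kr20} closely and introduces no essentially new analytic ideas at this stage; the deep analytic content of the theorem is already absorbed into hypothesis~(4), which is where the companion paper \cite{KrSc19} does the real work.
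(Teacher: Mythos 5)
Your three-layer plan --- identify the $\LIM$-limits of the resolvents $R_n(\lambda)h_n$ as viscosity solutions of $f-\lambda Hf=h$ and pin them down by the comparison principle, build $V(t)$ from $R(\lambda)$ via the exponential formula, then combine Bryc's theorem, the Markov property and exponential tightness to obtain the path-space LDP with rate \eqref{eqn:LDP_rate2} --- is exactly the strategy behind this statement, which the paper does not reprove but imports as Theorem 7.10 of \cite{Kr20}; the outline in Section \ref{section:outline_of_proof} matches your sketch point for point. The only spot where your write-up is looser than the cited proof is that you presuppose existence of $\LIM$-cluster points of $R_n(\lambda)h_n$, whereas \cite{Kr20} works with relaxed upper/lower limits precisely to sidestep that compactness question, a technicality absorbed by the reference you are already invoking.
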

	On the basis of this abstract result, we derive our main result.
	\begin{proof}[Proof of Theorem \ref{theorem:LDP_general}]
		To apply Theorem \ref{theorem:abstract_LDP}, we have to verify \ref{item:LDP_abstract_initialLDP} to \ref{item:LDP_abstract_comparison}. Assumption \ref{item:LDP_abstract_initialLDP}, as it is an initial condition, will be assumed from the outset. We verify \ref{item:LDP_abstract_exptight} in Proposition \ref{proposition:exponential_compact_containment} and \ref{item:LDP_abstract_convergenceHn} in Proposition \ref{proposition:convergence_of_Hamiltonians} below.
		Assumption~\ref{item:LDP_abstract_comparison} will be established in Theorem \ref{theorem:comparison}. 
	\end{proof}
	
	\subsection{A limiting operator} \label{section:limiting_operator}
	
	Our first goal is to establish that there is some operator $H$ such that $H \subseteq ex-\LIM H_n$. Due to Condition \ref{condition:compact_setting:well-posedness-martingale-problem} and Assumptions \ref{assumption:compact_setting:convergence-of-nonlinear-generators} and \ref{assumption:compact_setting:convergence-of-nonlinear-generators-external} there is a clear candidate for $H$.
	
	\begin{definition}
		The operator $H \subseteq C_b(E) \times C_b(E \times F)$ with $\cD(H) = C_{cc}^\infty(E)$ is multi-valued. For $f \in C_{cc}^\infty(E)$, $x \in E$ and $\phi$ such that $e^\phi \in \cD(A^\mathrm{fast})$  set 
		\begin{equation*}
			H_{f,\phi}(x,z) := V_{x,\nabla f(x)}(z) + e^{-\phi(z)} A^\mathrm{fast}_x e^{\phi}(z).
		\end{equation*} 
		The operator $H$ is given by
		\begin{equation*}
			H := \left\{(f,H_{f,\phi}) \, \middle| \, f \in C_{cc}^\infty(E), \phi: e^{\phi} \in \cD(A^\mathrm{fast}) \right\}.
		\end{equation*}
		
	\end{definition}

	\begin{proposition} \label{proposition:convergence_of_Hamiltonians}
		For all $(f,g) \in H$ there are $f_n \in \cD(H_n)$ such that $\LIM f_n = f$ and $\LIM H_n f_n = g$.
	\end{proposition}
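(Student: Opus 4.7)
The plan is to exhibit a natural product-type approximation: for each $(f,g) = (f, H_{f,\phi}) \in H$ with $f \in C_{cc}^\infty(E)$ and $e^\phi \in \cD(A^\mathrm{fast})$, set
\begin{equation*}
f_n(y,z) := f(\eta_n(y)) + \frac{1}{r_n} \phi(z), \qquad (y,z) \in E_n \times F.
\end{equation*}
Since $\phi$ is bounded on the compact set $F$ and $r_n \to \infty$, the sequence $f_n$ is uniformly bounded and $|f_n(y,z) - f(\eta_n(y))| \leq \|\phi\|_\infty / r_n \to 0$, which yields $\LIM f_n = f$. To see that $f_n \in \cD(H_n)$, decompose $f = c + \tilde f$ with $c \in \bR$ and $\tilde f \in C_c^\infty(E) \subseteq D_0$. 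Then $e^{r_n f_n(y,z)} = e^{r_n c} \, e^{r_n \tilde f(\eta_n(y))} \, e^{\phi(z)}$, and the tensor structure, together with Assumption \ref{assumption:compact_setting:convergence-of-nonlinear-generators} (giving $e^{r_n \tilde f} \in \cD(A^\mathrm{slow}_{n,z})$) and the hypothesis $e^\phi \in \cD(A^\mathrm{fast})$, places $e^{r_n f_n}$ in $\cD(A_n)$.

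The key is that the exponential splits as a product of a function of $y$ and a function of $z$, so applying $A_n = A^\mathrm{slow}_{n,z} + r_n A^\mathrm{fast}_{n,y}$ yields two clean terms. A direct calculation gives
\begin{equation*}
H_n f_n(y,z) = \frac{1}{r_n} e^{-r_n f(\eta_n(y))} \bigl(A^\mathrm{slow}_{n,z} e^{r_n f \circ \eta_n}\bigr)(y) \;+\; e^{-\phi(z)} \bigl(A^\mathrm{fast}_{n,y} e^{\phi}\bigr)(z).
\end{equation*}
The target $g(\eta_n(y), z) = V_{\eta_n(y), \nabla f(\eta_n(y))}(z) + e^{-\phi(z)} A^\mathrm{fast}_{\eta_n(y)} e^\phi(z)$ is built of a slow and a fast piece matching exactly these two summands.

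For the slow summand, Assumption \ref{assumption:compact_setting:convergence-of-nonlinear-generators} delivers convergence to $V_{\eta_n(y), \nabla f(\eta_n(y))}(z)$ uniformly on $\eta_n^{-1}(K) \times F$ for every compact $K \subseteq E$, and the supremum bound in the same assumption gives the global bound needed for $\LIM$. For the fast summand, Assumption \ref{assumption:compact_setting:convergence-of-nonlinear-generators-external} supplies the uniform bound as well as sequential convergence $A^\mathrm{fast}_{n,y_n} e^\phi \to A^\mathrm{fast}_x e^\phi$ in $C(F)$ whenever $\eta_n(y_n) \to x$.

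The main obstacle I anticipate is promoting this sequential convergence into the uniform convergence on $\eta_n^{-1}(K) \times F$ required by the definition of $\LIM$, and matching it with $A^\mathrm{fast}_{\eta_n(y)} e^\phi$ rather than with the limit $A^\mathrm{fast}_x e^\phi$. I would handle this by a standard compactness/contradiction argument: first, a diagonal argument applied to Assumption \ref{assumption:compact_setting:convergence-of-nonlinear-generators-external} shows that $x \mapsto A^\mathrm{fast}_x e^\phi$ is continuous from $E$ into $C(F)$; then, if the supremum on $\eta_n^{-1}(K) \times F$ of $|e^{-\phi(z)}\bigl(A^\mathrm{fast}_{n,y} e^\phi - A^\mathrm{fast}_{\eta_n(y)} e^\phi\bigr)(z)|$ did not vanish, one could extract $(n_k, y_{n_k}, z_{n_k})$ with $\eta_{n_k}(y_{n_k}) \to x \in K$ and $z_{n_k} \to z^\star \in F$ along which the difference stays bounded below; Assumption \ref{assumption:compact_setting:convergence-of-nonlinear-generators-external} forces $A^\mathrm{fast}_{n_k, y_{n_k}} e^\phi \to A^\mathrm{fast}_x e^\phi$ uniformly on $F$, while the continuity above forces $A^\mathrm{fast}_{\eta_{n_k}(y_{n_k})} e^\phi \to A^\mathrm{fast}_x e^\phi$ in $C(F)$, contradicting the lower bound. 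Combining the two uniform estimates and the uniform bounds furnished by the assumptions yields $\LIM H_n f_n = g$, as desired.
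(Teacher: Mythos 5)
Your proposal is correct and follows essentially the same route as the paper: the same choice $f_n(y,z) = f(\eta_n(y)) + r_n^{-1}\phi(z)$, the same product-splitting of $e^{r_n f_n}$ into slow and fast parts, and the same appeal to Assumptions \ref{assumption:compact_setting:convergence-of-nonlinear-generators} and \ref{assumption:compact_setting:convergence-of-nonlinear-generators-external}. The only difference is that you spell out the compactness/contradiction argument upgrading the sequential convergence of $A^\mathrm{fast}_{n,y}e^\phi$ to uniform convergence over $\eta_n^{-1}(K)\times F$, a step the paper leaves implicit.
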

	
	\begin{proof}
		Fix an arbitrary function $f \in D_0$ and $\phi \in \cD(A^\mathrm{fast})$. Set $f_n(x,z) := f(\eta_n(x)) + r_n^{-1} \phi(z)$. Note that $f_n \in \cD(H_n)$. We will prove that $\LIM f_n = f$ and $\LIM H_n f_n = H_{f,\phi}$, which will establish the claim.
		
		\smallskip
		
		As $f$ and $\phi$ are bounded, it follows that $\vn{f_n - f} = r_n^{-1}\vn{\phi} \rightarrow 0$, which implies $\LIM f_n = f$. By Condition \ref{condition:compact_setting:well-posedness-martingale-problem}, the images $H_n f_n$ are given by
		\begin{align*}
			H_n f_n(x,z)
			=
			\frac{1}{r_n} e^{-r_nf(x)} A_{n,z}^\mathrm{slow} e^{r_nf(x)} + e^{-\phi(z)} \left(A^{\mathrm{fast}}_x e^{\phi}\right)(z).
		\end{align*}
		Thus, the result follows by Assumptions \ref{assumption:compact_setting:convergence-of-nonlinear-generators} and \ref{assumption:compact_setting:convergence-of-nonlinear-generators-external}.
	\end{proof}

	\subsection{Exponential tightness}
	
	To establish exponential tightness, we first note that by \cite[Corollary 4.19]{FK06} or \cite[Proposition 7.12]{Kr19b} it suffices in our context to establish the exponential compact containment condition. This is the content of the next proposition.
	\begin{proposition} \label{proposition:exponential_compact_containment}
		For each compact set $K \subseteq E$, $T >0$ and $a > 0$ there is a compact set $\widehat{K} = \widehat{K}(K,T,a) \subseteq E$ such that 
		\begin{equation*}
			\limsup_{n \rightarrow \infty} \sup_{(y,z) \in \eta_n^{-1}(K) \times F} \frac{1}{r_n} \log P_{y,z}\left[Y_n(t) \notin \eta_n^{-1}(\widehat{K}) \text{ for some } t \in [0,T] \right] \leq - a.
		\end{equation*}	
	\end{proposition}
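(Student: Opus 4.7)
The plan is to construct an exponential supermartingale from the containment function $\Upsilon$ of Assumption~\ref{item:assumption:slow_regularity:compact_containment} and combine it with Doob's maximal inequality. The key subtlety is that $\Upsilon$ is typically not bounded, so a preliminary truncation is needed.

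I first truncate $\Upsilon$ so that it becomes bounded while retaining the inequality $V_{x,\nabla \Upsilon}(z)\leq c_\Upsilon$. Pick a smooth non-decreasing function $\chi_L:\bR\to\bR$ with $\chi_L(s)=s$ for $s\leq L-1$, $\|\chi_L'\|_\infty\leq 1$, and $\chi_L$ eventually constant, and set $\Upsilon_L:=\chi_L\circ \Upsilon$. Then $\Upsilon_L\in C_b^1(E)$ and $\nabla \Upsilon_L(x)=\lambda(x)\nabla \Upsilon(x)$ with $\lambda(x)\in[0,1]$. Convexity of $p\mapsto V_{x,p}(z)$ and the identity $V_{x,0}(z)=0$ in Assumption~\ref{item:assumption:slow_regularity:convexity} yield
\[
V_{x,\nabla \Upsilon_L(x)}(z) \leq \lambda(x)\,V_{x,\nabla \Upsilon(x)}(z)+(1-\lambda(x))\,V_{x,0}(z) \leq c_\Upsilon.
\]
Setting $f_n:=\Upsilon_L\circ\eta_n$, which depends only on $y$ so that the fast part $A^\mathrm{fast}_{n,y}$ contributes zero, the first bullet of Assumption~\ref{assumption:compact_setting:convergence-of-nonlinear-generators} (applied to $\Upsilon_L\in D_0$) supplies a constant $C\geq 0$ with
\[
\sup_n \sup_{y\in E_n,\, z\in F}\,\frac{1}{r_n}\,e^{-r_n f_n(y)}\,A_n\bigl[e^{r_n f_n}\bigr](y,z) \leq C.
\]

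Because $e^{r_n f_n}$ is bounded and lies in $\mathcal{D}(A_n)$, standard exponential martingale theory for the martingale problem (Condition~\ref{condition:compact_setting:well-posedness-martingale-problem}) gives that
\[
N_n(t) := \exp\!\left(r_n f_n(Y_n(t)) - r_n\int_0^t H_n f_n(Y_n(s),Z_n(s))\,\dd s\right)
\]
is a non-negative martingale under $P_{y,z}$ with $N_n(0)=e^{r_n \Upsilon_L(\eta_n(y))}$. The pointwise bound $H_n f_n\leq C$ yields the pathwise inequality $e^{r_n \Upsilon_L(X_n(t))}\leq e^{r_n C t}\,N_n(t)$. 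Hence, Doob's maximal inequality for the non-negative martingale $N_n$ gives, for any $(y,z)\in\eta_n^{-1}(K)\times F$ and $M>0$,
\[
P_{y,z}\!\left[\sup_{t\in[0,T]}\Upsilon_L(X_n(t))\geq M\right] \leq e^{-r_n(M-CT)}\,\bE_{y,z}[N_n(0)] \leq e^{-r_n(M-CT-B)},
\]
where $B:=\sup_{x\in K}\Upsilon(x)<\infty$ by continuity of $\Upsilon$ and compactness of $K$.

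Given $T$ and $a$, choose $M:=a+CT+B$ and any $L\geq M$. Since $\Upsilon_L\leq \Upsilon$ on $E$ with equality on $\{\Upsilon\leq L\}$, the event $\{\Upsilon(X_n(t))>M\}$ implies $\Upsilon_L(X_n(t))\geq M$. Setting $\widehat{K}:=\{x\in E:\Upsilon(x)\leq M\}$, which is compact by Definition~\ref{def:results:compact-containment}, then delivers the required bound. The main obstacle is the technical verification that $\Upsilon_L$ can be used as a test function in Assumption~\ref{assumption:compact_setting:convergence-of-nonlinear-generators}: if $\Upsilon_L \notin D_0$, one must approximate $\Upsilon_L$ by $C_c^\infty$ functions in a way that preserves both the uniform upper bound on $H_n f_n$ and the identification with $\Upsilon$ on a large sublevel set. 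In typical examples this is routine, but the approximation has to be done carefully to keep the constant $C$ independent of the approximation parameter.
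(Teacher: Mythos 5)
Your overall strategy (truncate the containment function, build an exponential martingale for $f_n=\Upsilon_L\circ\eta_n$, and use a maximal/optional-stopping argument) is the same as the paper's, but there is a genuine gap in how you control the drift term of the martingale. The quantity that enters the exponential bound is the \emph{pre-limit} expression $H_nf_n(y,z)=\tfrac{1}{r_n}e^{-r_nf_n}A_{n,z}^{\mathrm{slow}}e^{r_nf_n}$, and you bound it globally only by the constant $C$ coming from the first bullet of Assumption~\ref{assumption:compact_setting:convergence-of-nonlinear-generators}. That constant is merely ``finite'' and depends on the test function, hence on the truncation level $L$. Your choice of level then requires $M=a+C(L)T+B$ together with $M\leq L-1$, i.e.\ $L-1\geq a+C(L)T+B$, which is circular: if $C(L)$ grows with $L$ (and nothing in the assumptions prevents this), no admissible $L$ exists. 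Relatedly, your nice convexity observation $V_{x,\nabla\Upsilon_L(x)}(z)\leq c_\Upsilon$ concerns only the \emph{limit} Hamiltonian $V$ and is never transferred to $H_nf_n$; the only mechanism in the assumptions for such a transfer is the second bullet of Assumption~\ref{assumption:compact_setting:convergence-of-nonlinear-generators}, which gives uniform convergence only on sets of the form $\eta_n^{-1}(K)\times F$ with $K$ compact, and convexity of $f\mapsto H_nf$ does not help because the coefficient $\chi_L'(\Upsilon(x))$ varies with $x$. Note also that since $A_{n,z}^{\mathrm{slow}}$ may be nonlocal (jump processes), constancy of $\Upsilon_L$ outside a compact set does not make $H_nf_n$ small there.

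The paper's proof avoids exactly this problem by localizing before estimating: it fixes $c$ with $K\subseteq\{\Upsilon\leq c\}$, sets $G=\{\Upsilon<c+\beta\}$ with $\beta$ chosen from $a$, $T$ and $c_\Upsilon$ only, stops the exponential martingale at the exit time of $\eta_n^{-1}(G)$, and then needs a bound on $g_n=H_nf_n$ only on $\eta_n^{-1}(G)\times F$. There the LIM-convergence $g_n\to g$ with $g(x,z)=V_{x,\nabla\Upsilon(x)}(z)\leq c_\Upsilon$ (valid because the truncation is the identity on $\overline{G}$) gives, for large $n$, a bound $c_\Upsilon+o(1)$ that is independent of the truncation, so the exponent closes with constants fixed in advance. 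To repair your argument you should insert the same stopping-time localization (or otherwise produce a pre-limit bound on $H_nf_n$ in terms of $c_\Upsilon$ rather than the unquantified constant from the first bullet); as written, the Doob-inequality step does not yield a bound of the required form.
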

	
	\begin{proof}
		By Assumption \ref{assumption:results:regularity_of_V} \ref{item:assumption:slow_regularity:compact_containment} we have $\sup_{x,z} V_{x,\nabla \Upsilon(x)}(z) \leq c_\Upsilon$. Choose $\beta > 0$ such that $Tc_\Upsilon + 1 - \beta \leq -a$. As $\Upsilon$ is continuous, there is some $c$ such that
		\begin{equation*}
			K \subseteq \left\{x \, \middle| \, \Upsilon(x) \leq c \right\}
		\end{equation*}
		Next, set $G := \left\{x \, \middle| \, \Upsilon(x) < c + \beta \right\}$ and note that $G$ is open. Let $\widehat{K}$ be the closure of $G$. Note that $\widehat{K}$ is compact.
		
		Let $f(x) := \iota \circ \Upsilon $ where $\iota$ is some smooth increasing function such that
		\begin{equation*}
			\iota(r) = \begin{cases}
				r & \text{if } r \leq \beta +c, \\
				\beta+ c + 1 & \text{if } r \geq \beta + c + 2.
			\end{cases}
		\end{equation*}
		It follows that $\iota \circ \Upsilon = \Upsilon$ on $\widehat{K}$ and is constant outside of a compact set. Set $f_n = f \circ \eta_n$, $g_n := H_n f_n$ and $g := \LIM g_n$ (which exists due to Assumption \ref{assumption:compact_setting:convergence-of-nonlinear-generators}). Note that $g(x,z) = V_{x,\nabla \Upsilon(x)}(z)$ if $x \in \widehat{K}$. Therefore, we have $\sup_{x \in \widehat{K}, z \in F} g(x,z) \leq c_\Upsilon$.

		Let $\tau$ be the stopping time $\tau := \inf \left\{t \geq 0 \, \middle| \, Y_n(t) \notin \eta_n^{-1}(G) \right\}$ and let
		\begin{equation*}
			M_n(t) := \exp \left\{r_n \left( f_n(Y_n(t)) - f_n(Y_n(t)) - \int_0^t g_n(Y_n(t),Z_n(t)) \dd s \right) \right\}.
		\end{equation*}
		By construction $M_n$ is a martingale, and by the optional stopping theorem $t \mapsto M_n(t \wedge \tau)$ is a martingale also. We obtain
		\begin{align*}
			& \PR_{y,z}\left[Y_n(t) \notin \widehat{K} \text{ for some } t \in [0,T]\right] \\
			& \leq \PR_{y,z}\left[Y_n(t) \notin \eta_n^{-1}(G) \times F \text{ for some } t \in [0,T]\right] \\
			& = \bE_{y,z}\left[\bONE_{\{Y_n(t) \notin \eta_n^{-1}(G) \text{ for some } t \in [0,T]\}} M_n(t \wedge \tau) M_n(t\wedge \tau)^{-1} \right] \\
			& \leq \exp\left\{- r_n \left(\inf_{y_1 \in G^c} \Upsilon(\eta_n(y_1)) - \Upsilon(\eta_n(y))   \right. \right. \\
			& \hspace{4cm} \left. \left. - T \sup_{y_2 \in \eta_n^{-1}(G), z_2 \in F} g_n(y_2,z_2) \right) \right\} \\
			& \hspace{2.5cm} \times  \bE_{y,z}\left[\bONE_{\{Y_n(t) \notin \eta_n^{-1}(G) \text{ for some } t \in [0,T]\}} M_n(t \wedge \tau) \right].
		\end{align*}
		As $\LIM f_n = f$ and $\LIM g_n = g$, we obtain that the term in the exponential is bounded by $ r_n\left(c_\Upsilon T - \beta \right) \leq -r_n a$ for sufficiently large $n$. The final expectation is bounded by $1$ due to the martingale property of $M_n(t \wedge \tau)$.
		
		We conclude that
		\begin{equation*}
			\limsup_n \sup_{y \in \eta_n^{-1}(K), z \in F} \frac{1}{r_n} \log \PR_{y,z}\left[Y_n(t) \notin \eta^{-1}(\widehat{K}) \text{ for some } t \in [0,T]\right] \leq -a.
		\end{equation*}
		
	\end{proof}

	\section{Proof of Comparison Principle via a framework of Hamiltonians} \label{section:comparison}
	In this section, we establish the comparison principle for the Hamilton-Jacobi equation $f - \lambda Hf = h$ for $H$ that was introduced in Section \ref{section:limiting_operator} above. 
	To do so, we will relate solutions to the Hamilton-Jacobi equation for $H$ to solutions of a related Hamilton-Jacobi-Bellman equation in terms of the operator $\cH$ of \eqref{eqn:variational_Hamiltonian}. That $\cH$ is continuous was established in \cite[Appendix A]{KrSc19} on the basis of Assumptions \ref{assumption:results:regularity_of_V} and \ref{assumption:results:regularity_I}.

	\begin{definition} \label{definition_effectiveH}
		The operator $\bfH \subseteq C_b^1(E) \times C_b(E)$ has domain $\cD(\bfH) = C_{cc}^\infty(E)$ and satisfies $\bfH f(x) = \cH(x, \dd f(x))$, where $\cH$ is the map 
		\begin{equation} \label{eqn:def_variational_Hamiltonian}
			\cH(x,p) = \sup_{\pi \in \cP(F)} \left\{\int V_{x,p}(z) \pi(\dd z) - \cI(x,\pi) \right\}
		\end{equation}
		that was introduced in \eqref{eqn:variational_Hamiltonian}
		with $\cI$ as in \eqref{eqn:def_DV_functional}. We also repeat it for completeness:
		\begin{equation*} 
			\cI(x,\pi) = - \inf_{\substack{ g \in \mathcal{D}(A^\mathrm{fast}) \\ \inf   g > 0 }} \int \frac{A^\mathrm{fast}_x g(z)}{g(z)} \pi(\dd z).
		\end{equation*}
	\end{definition}
	The Hamilton-Jacobi-Bellman equation $f - \lambda \bfH f = h$ and two related equations $f - \lambda H_\dagger f = h$ and $f - \lambda H_\ddagger f = h$ were studied in more general form  in the accompanying paper \cite{KrSc19}. There we work with a general (non-compact) control  space $\Theta$ instead of $\cP(F)$. In addition, we allow for more general 'internal Hamiltonian' $\Lambda$ and cost function $\cI$.
	\smallskip
	
	We prove the comparison principle for the Hamilton-Jacobi equation in terms of $H$ by relating it to a set of Hamilton-Jacobi equations with Hamiltonians $H_\dagger,H_\ddagger$ constructed from $\cH$. The comparison principle for the Hamilton-Jacobi equations in terms of $H_\dagger,H_\dagger$ was established in \cite{KrSc19}. Effectively, \cite{KrSc19} establishes the blue box and the two arrows on the right of Figure \ref{figure:Hamiltonians} under a generalization of Assumptions~\ref{assumption:results:regularity_of_V} and \ref{assumption:results:regularity_I}.
	Below, we complete the figure by proving the left-hand side of the diagram.
	
	\begin{figure}[h!]
		\begin{center}
			\begin{tikzpicture}
				\matrix (m) [matrix of math nodes,row sep=1em,column sep=4em,minimum width=2em]
				{
					{ } & H_1 &[7mm] H_\dagger &[5mm] { } \\
					H & { } & { } & \bfH \\
					{ }  & H_2 & H_\ddagger & { } \\};
				\path[-stealth]
				(m-2-1) edge node [above] {sub} (m-1-2)
				(m-2-1) edge node [below] {super \qquad { }} (m-3-2)
				(m-1-2) edge node [above] {sub \qquad { }} (m-1-3)
				(m-3-2) edge node [below] {super \qquad { }} (m-3-3)
				(m-2-4) edge node [above] {\qquad sub} (m-1-3)
				(m-2-4) edge node [below] {\qquad super} (m-3-3);
				
				\begin{pgfonlayer}{background}
					\node at (m-2-3) [rectangle,draw=blue!50,fill=blue!20,rounded corners, minimum width=1cm, minimum height=2.5cm]  {comparison};
				\end{pgfonlayer}
			\end{tikzpicture}
		\end{center}
		\caption{An arrow connecting an operator $A$ with operator $B$ with subscript 'sub' means that viscosity subsolutions of $f - \lambda A f = h$ are also viscosity subsolutions of $f - \lambda B f = h$. Similarly for arrows with a subscript 'super'. The box around the operators $H_\dagger$ and $H_\ddagger$ indicates that the comparison principle holds for subsolutions of $f - \lambda H_\dagger f = h$ and supersolutions of $f - \lambda H_\ddagger f = h$.}
		\label{SF:fig:CP-diagram-in-proof-of-CP}
	\end{figure}
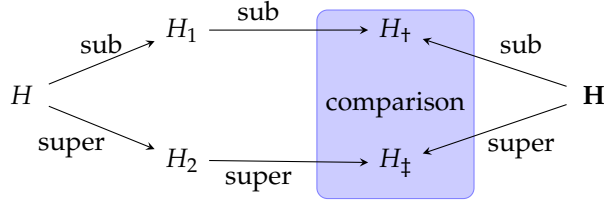

	We now prove the comparison principle for $f - \lambda Hf = h$ based on the results summarized in Figure~\ref{SF:fig:CP-diagram-in-proof-of-CP}. 
	\begin{theorem} \label{theorem:comparison}
		Let $h_1,h_2 \in C_b(E)$ and $\lambda >0$. Let $u$ be any subsolution to $f - \lambda H f = h_1$ and let $v$ be any supersolution to $f - \lambda H f = h_2$. Then we have that 
		\begin{equation*}
			\sup_x u(x) - v(x) \leq \sup_x h_1(x) - h_2(x).
		\end{equation*}
	\end{theorem}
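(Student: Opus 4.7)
I will follow the reduction scheme depicted in Figure~\ref{SF:fig:CP-diagram-in-proof-of-CP}. The comparison principle between subsolutions of $f - \lambda H_\dagger f = h_1$ and supersolutions of $f - \lambda H_\ddagger f = h_2$ (the blue box) is established in the companion paper~\cite{KrSc19} under Assumptions~\ref{assumption:results:regularity_of_V} and~\ref{assumption:results:regularity_I}, applied with the integrand $\Lambda(x,p,\pi) = \int V_{x,p}(z)\,\pi(\dd z)$ and the Donsker--Varadhan cost $\cI$. My task is therefore to establish the left-hand arrows of the diagram: every viscosity subsolution of $f - \lambda Hf = h_1$ is a viscosity subsolution of $f - \lambda H_\dagger f = h_1$, and the analogous statement for supersolutions and $H_\ddagger$. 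I will do this by first routing through the variational operator $\bfH$ of Definition~\ref{definition_effectiveH}; the subsequent step (from $\bfH$ to $H_\dagger$ and $H_\ddagger$) is precisely what~\cite{KrSc19} provides via the operators $H_1$ and $H_2$ appearing in the figure.

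\textbf{Core step: from $H$-subsolutions to $\bfH$-subsolutions.} Let $u$ be an $H$-subsolution of $f - \lambda H f = h_1$, and fix a test function $f \in C_{cc}^\infty(E) = \cD(\bfH)$. The guiding idea is that although the image $H_{f,\phi}(x,z) = V_{x,\nabla f(x)}(z) + e^{-\phi(z)} A^\mathrm{fast}_x e^{\phi}(z)$ depends on both $z$ and on the auxiliary test function $\phi$, the principal-eigenvalue Assumption~\ref{assumption:compact_setting:eigen_value}\ref{item:assumption:PI:solvePI} lets us choose $\phi$ so that $H_{f,\phi}(x_0,\cdot)$ is arbitrarily close to the constant $\cH(x_0,\nabla f(x_0))$ at a chosen base point $x_0$. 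Concretely: using Assumption~\ref{assumption:results:regularity_of_V}\ref{item:assumption:slow_regularity:compact_containment} (perturbing by $\varepsilon \Upsilon$ if needed) I may assume $u - f$ attains its supremum at some $x_0 \in E$. For each $\delta > 0$, apply~\ref{item:assumption:PI:solvePI} at $(x_0,\nabla f(x_0))$ to get a strictly positive $g_\delta \in \cD(A^\mathrm{fast})$ with $\sup_z | (V_{x_0,\nabla f(x_0)} + A^\mathrm{fast}_{x_0})g_\delta(z) - \cH(x_0,\nabla f(x_0)) g_\delta(z) | \leq \delta$. Set $\phi_\delta := \log g_\delta$; the divisibility of $\cD(A^\mathrm{fast})$ under the exponential map is ensured by Assumption~\ref{assumption:compact_setting:eigen_value}\ref{item:assumption:PI:domain}. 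Then $\sup_z |H_{f,\phi_\delta}(x_0,z) - \cH(x_0,\nabla f(x_0))| \leq \delta /\inf_z g_\delta(z)$. Next, feed the pair $(f,H_{f,\phi_\delta}) \in H$ into the subsolution definition of $u$: there is a sequence $(x_n,z_n)$ with $u(x_n) - f(x_n) \to \sup_x(u(x) - f(x))$ and $\limsup_n u(x_n) - \lambda H_{f,\phi_\delta}(x_n,z_n) - h_1(x_n) \leq 0$. Using continuity of $V$ in $(x,p)$ uniformly in $z$ (Assumption~\ref{assumption:results:regularity_of_V}\ref{item:assumption:slow_regularity:continuity}) and continuity of $x \mapsto A^\mathrm{fast}_x e^{\phi_\delta}$ in the supremum norm (Assumption~\ref{assumption:compact_setting:convergence-of-nonlinear-generators-external}), the map $x \mapsto H_{f,\phi_\delta}(x,\cdot)$ is continuous into $C(F)$. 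Compactness of $F$ lets me extract a subsequential limit with $x_n \to x_0$; this gives $u(x_0) - \lambda\cH(x_0,\nabla f(x_0)) - h_1(x_0) \leq C \delta$, and sending $\delta \downarrow 0$ yields the $\bfH$-subsolution inequality.

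\textbf{Supersolution passage and closing.} The symmetric argument for an $H$-supersolution $v$ proceeds identically, applying the principal-eigenvalue approximation at a minimizer of $v - f$ and choosing a sequence via the supersolution definition in Definition~\ref{definition:viscosity_solutions}, yielding the $\bfH$-supersolution inequality. By the results of~\cite{KrSc19} (right-hand arrows of the diagram: $\bfH$-subsolutions are $H_\dagger$-subsolutions and $\bfH$-supersolutions are $H_\ddagger$-supersolutions), we then land inside the blue box, and the comparison principle for $(H_\dagger,H_\ddagger)$ from~\cite{KrSc19} delivers $\sup_x (u(x) - v(x)) \leq \sup_x (h_1(x) - h_2(x))$.

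\textbf{Main obstacle.} The main technical care is in the double limit $n \to \infty$, $\delta \downarrow 0$. The eigenfunction $g_\delta$ is constructed at the single point $(x_0,\nabla f(x_0))$, yet the subsolution property produces a sequence $x_n$ converging to $x_0$ only after choosing $\phi = \phi_\delta$; one must ensure that the $z$-uniform eigenvalue approximation survives the $x_n \to x_0$ passage. This is where the continuity inputs~\ref{assumption:compact_setting:convergence-of-nonlinear-generators-external} and~\ref{item:assumption:slow_regularity:continuity} are essential, together with compactness of $F$, which together imply that $H_{f,\phi_\delta}$ is continuous on $E \times F$, so that for each fixed $\delta$ the error incurred in passing $x_n \to x_0$ vanishes. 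A secondary nuisance---ensuring $u - f$ admits a maximizer---is standard and handled by perturbation with the containment function $\Upsilon$ of Assumption~\ref{assumption:results:regularity_of_V}\ref{item:assumption:slow_regularity:compact_containment}.
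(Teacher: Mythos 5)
Your overall strategy (use the eigenvalue Assumption~\ref{assumption:compact_setting:eigen_value} to replace the $\phi$-dependent images of $H$ by the variational Hamiltonian $\cH$, then invoke the comparison principle for $H_\dagger,H_\ddagger$ from \cite{KrSc19}) is the right one, but the left leg of your diagram --- passing \emph{directly} from $H$-subsolutions to $\bfH$-subsolutions --- has two genuine gaps, and in fact the paper deliberately avoids this arrow. First, the perturbation step: $\cD(H)=\cD(\bfH)=C_{cc}^\infty(E)$, and $(1-\varepsilon)f+\varepsilon\Upsilon$ is not constant outside a compact set, so it is an admissible test function for neither operator; you cannot ``assume $u-f$ attains its supremum'' without leaving the domain. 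The paper resolves this by introducing the intermediate operators $H_1,H_2$, whose test functions contain the $\varepsilon\Upsilon$ term by construction, and by the cutoff argument in the proof of Lemma~\ref{lemma:viscosity_solutions_compactify1}, where $\gamma\circ f_1^\varepsilon$ is constant outside a compact set (hence in $\cD(H)$) and convexity of $p\mapsto V_{x,p}$ and of $\psi\mapsto e^{-\psi}(A^\mathrm{fast}_x e^{\psi})$ is used to dominate the image by $H^\varepsilon_{1,f,\phi}$.

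Second, and more seriously, the double limit in your core step does not close as written. The eigenfunction $\phi_\delta$ is constructed at the pre-selected point $(x_0,\nabla f(x_0))$, but the sequence $(x_n,z_n)$ supplied by Definition~\ref{definition:viscosity_solutions} for the pair $(f,H_{f,\phi_\delta})$ is only known to be maximizing for $u-f$: compactness of $F$ controls $z_n$ only, and $x_n$ need not converge to \emph{your} $x_0$ --- it may converge to a different maximizer, or not converge at all, since $f\in C_{cc}^\infty(E)$ gives no compactness in $E$. Continuity of $x\mapsto H_{f,\phi_\delta}(x,\cdot)$ does not repair this, because the approximate-eigenvalue identity $H_{f,\phi_\delta}(x,\cdot)\approx\cH(x,\nabla f(x))$ is guaranteed only at $x=x_0$. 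This is exactly the role of Lemma~\ref{lemma:strong_viscosity_solutions}: one adds a bump $\hat f\geq 0$ vanishing only at $x_0$ to make $x_0$ the unique maximizer and uses that the image depends on the test function only through its gradient at $x_0$, thereby obtaining the subsolution inequality \emph{at} $x_0$; and the extraction of a convergent maximizing subsequence there relies on the compact sublevel sets of the $\Upsilon$-perturbed test functions of $H_1$, which are unavailable for $\bfH$'s domain. So the argument must be routed $H\to H_1\to H_\dagger$ (and $H\to H_2\to H_\ddagger$) as in Figure~\ref{SF:fig:CP-diagram-in-proof-of-CP}, via Lemmas~\ref{lemma:viscosity_solutions_compactify1}, \ref{lemma:strong_viscosity_solutions} and \ref{lemma:viscosity_solutions_arrows_based_on_eigenvalue}; the arrows $\bfH\to H_\dagger,H_\ddagger$ that you invoke from \cite{KrSc19} are correct but are used in the paper for the action-integral representation of the rate function, not for this comparison, and the arrow $H\to\bfH$ you propose is neither proved nor needed.
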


	\begin{proof}
		Fix~$h_1,h_2\in C_b(E)$ and~$\lambda>0$. Let~$u$ be a viscosity subsolution and~$v$ be a viscosity supersolution to~$(1-\lambda H)f=h$. By Figure~\ref{SF:fig:CP-diagram-in-proof-of-CP}, the function~$u$ is a viscosity subsolution to~$(1-\lambda H_\dagger)f=h$ and~$v$ is a viscosity supersolution to~$(1-\lambda H_\ddagger)f=h$. Hence by the comparison principle for $H_\dagger,H_\dagger$ established in Theorem \ref{theorem:comparison_from_otherpaper} below,~$\sup_x u(x) - uv(x) \leq \sup_x h_1(x) - h_2(x)$, which finishes the proof.
	\end{proof}
	
	The rest of this section is devoted to establishing Figure~\ref{SF:fig:CP-diagram-in-proof-of-CP}. 
	
	\subsection{Definition of auxiliary  operators} \label{subsection:definition_of_Hamiltonians}

	We introduce the operators $H_\dagger,H_\ddagger$ and $H_1,H_2$. In both cases, the new Hamiltonians will serve as natural upper and lower bounds for $\bfH$ and $H$ respectively. These new Hamiltonians are defined in terms of the containment function $\Upsilon$, which allows us to restrict our analysis to compact sets. Recall Assumption~\ref{item:assumption:slow_regularity:compact_containment} and the constant $C_\Upsilon := \sup_{x,z} V_{x,\nabla \Upsilon(x)}(z)$ therein.

	Denote by $C_l^\infty(E)$ the set of smooth functions on $E$ that have a lower bound and by $C_u^\infty(E)$ the set of smooth functions on $E$ that have an upper bound.

	\begin{definition}
		\begin{itemize}
			\item For $f \in C_l^\infty(E)$  and $\varepsilon \in (0,1)$  set 
			\begin{gather*}
				f^\varepsilon_\dagger := (1-\varepsilon) f + \varepsilon \Upsilon, \\
				H_{\dagger,f}^\varepsilon(x) := (1-\varepsilon) \bfH f(x) + \varepsilon C_\Upsilon,
			\end{gather*}
			and set
			\begin{equation*}
				H_\dagger := \left\{(f^\varepsilon_\dagger,H_{\dagger,f}^\varepsilon) \, \middle| \, f \in C_l^\infty(E), \varepsilon \in (0,1) \right\}.
			\end{equation*} 
			\item For $f \in C_u^\infty(E)$ and $\varepsilon \in (0,1)$  set 
			\begin{gather*}
				f^\varepsilon_\ddagger := (1+\varepsilon) f - \varepsilon \Upsilon, \\
				H_{\ddagger,f}^\varepsilon(x) := (1+\varepsilon) \bfH f(x) - \varepsilon C_\Upsilon,
			\end{gather*}
			and set
			\begin{equation*}
				H_\ddagger := \left\{(f^\varepsilon_\ddagger,H_{\ddagger,f}^\varepsilon) \, \middle| \, f \in C_u^\infty(E), \varepsilon \in (0,1) \right\}.
			\end{equation*} 
		\end{itemize}
	\end{definition}

	\begin{definition} \label{definition:H1H2}
		\begin{itemize}
			\item For $f \in C_l^\infty(E)$ , $\varepsilon \in (0,1)$ and $\phi$ such that $e^\phi \in \cD(A^\mathrm{fast})$ set 
			\begin{gather*}
				f^\varepsilon_1 := (1-\varepsilon) f + \varepsilon \Upsilon, \\
				H^\varepsilon_{1,f,\phi}(x,z) :=
				(1-\varepsilon) \left( V_{x,\nabla f(x)}(z) + e^{-\phi(z)} A^\mathrm{fast}_x e^{\phi}(z)\right) + \varepsilon C_\Upsilon ,
			\end{gather*}
			and set
			\begin{equation*}
				H_1 := \left\{(f^\varepsilon_1,H^\varepsilon_{1,f,\phi}) \, \middle| \, f \in C_l^\infty(E), \varepsilon \in (0,1), \phi: \, e^\phi \in \cD(A^\mathrm{fast})  \right\}.
			\end{equation*} 
			\item For $f \in C_u^\infty(E)$, $\varepsilon \in (0,1)$ and $\phi$ such that $e^\phi \in \cD(A^\mathrm{fast})$ set 
			\begin{gather*}
				f^\varepsilon_2 := (1+\varepsilon) f - \varepsilon \Upsilon, \\
				H^\varepsilon_{2,f,\phi}(x,z) :=
				(1+\varepsilon) \left( V_{x,\nabla f(x)}(z)  + e^{-\phi(z)} A^\mathrm{fast}_x e^{\phi}(z) \right) - \varepsilon C_\Upsilon,
			\end{gather*}
			and set
			\begin{equation*}
				H_2 := \left\{(f^\varepsilon_2,H^\varepsilon_{2,f,\phi}) \, \middle| \, f \in C_u^\infty(E), \varepsilon \in (0,1), \phi: \, e^\phi \in \cD(A^\mathrm{fast})  \right\}.
			\end{equation*}
		\end{itemize}
	\end{definition}

	\subsection{The comparison principle for \texorpdfstring{$H_\dagger$}{Hdagger} and \texorpdfstring{$H_\ddagger$}{Hddagger}}
	
	The next theorem contains the comparison principle for $H_\dagger$ and $H_\ddagger$. This result is the key statement obtained in \cite{KrSc19}. We specialize it to our setting.

	\begin{theorem} \label{theorem:comparison_from_otherpaper}
		Let $h_1,h_2 \in C_b(E)$ and $\lambda >0$. Let $u$ be any subsolution to $f - \lambda H_\dagger f = h_1$ and let $v$ be any supersolution to $f - \lambda H_\ddagger f = h_2$. Then we have that 
		\begin{equation*}
			\sup_x u(x) - v(x) \leq \sup_x h_1(x) - h_2(x).
		\end{equation*}
	\end{theorem}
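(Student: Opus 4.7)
The plan is to invoke the abstract comparison principle established in the companion paper \cite{KrSc19}, specialized to this setting with control space $\Theta = \cP(F)$, internal Hamiltonian $\Lambda(x,p,\pi) = \int V_{x,p}(z)\,\pi(\dd z)$, and cost functional $\mathcal{I}$ as in \eqref{eqn:def_DV_functional}. Under this identification one has $\mathbf{H} f(x) = \sup_{\pi}\bigl\{ \Lambda(x, \nabla f(x), \pi) - \mathcal{I}(x,\pi) \bigr\}$, and the operators $H_\dagger$, $H_\ddagger$ coincide verbatim with the upper/lower Hamilton--Jacobi--Bellman operators treated there. The proof therefore reduces to verifying the hypotheses of that abstract theorem.

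The first step is to translate our assumptions. The continuity, convexity, and containment properties in Assumption~\ref{assumption:results:regularity_of_V}\ref{item:assumption:slow_regularity:continuity}--\ref{item:assumption:slow_regularity:compact_containment} pass from the integrand $V_{x,p}(z)$ to $\Lambda(x,p,\pi)$ by integration against $\pi \in \cP(F)$ and dominated convergence (using compactness of $F$); the growth estimate \ref{item:assumption:slow_regularity:controlled_growth} lifts in the same way to control $\Lambda(x,p,\pi_1)$ by $\Lambda(x,p,\pi_2)$ uniformly; and the continuity estimate \ref{item:assumption:slow_regularity:continuity_estimate} is stated directly at the level of $\Lambda$. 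Similarly, Assumption~\ref{assumption:results:regularity_I} supplies exactly the structural package required of $\mathcal{I}$ in \cite{KrSc19}: lower semicontinuity, existence of zero-measures, compactness of the sublevel sets $\Theta_{K,M}$, local majorization, and equicontinuity of $\mathcal{I}(\cdot,\pi)$ over $\Omega_{K,M}$. With these matchings the abstract theorem of \cite{KrSc19} applies and yields the desired comparison estimate.

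For orientation, the proof in \cite{KrSc19} itself proceeds by doubling of variables. Assuming $\sup(u-v) > \sup(h_1 - h_2)$, one maximizes $u(x) - v(y) - \alpha \Psi(x,y)$, exploiting the containment terms already baked into the test functions $f_\dagger^\varepsilon = (1-\varepsilon)f + \varepsilon \Upsilon$ and $f_\ddagger^\varepsilon = (1+\varepsilon)f - \varepsilon \Upsilon$ to localize approximate maximizers $(x_{\varepsilon,\alpha}, y_{\varepsilon,\alpha})$ to compact sublevel sets of $\Upsilon$. Feeding the appropriate test functions into the sub- and super-solution inequalities at these points and picking near-maximizing $\pi_{\varepsilon,\alpha} \in \cP(F)$ in the variational expression for $\cH$, one reduces the contradiction to controlling the differences of $\Lambda$ and $\mathcal{I}$ evaluated along the triple $(x_{\varepsilon,\alpha}, y_{\varepsilon,\alpha}, \pi_{\varepsilon,\alpha})$ at the conjugate momenta $\pm \alpha \nabla \Psi$. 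The continuity estimate \ref{item:assumption:slow_regularity:continuity_estimate} handles the $\Lambda$-difference and the equicontinuity in \ref{item:assumption:I:equi-cont} handles the $\mathcal{I}$-difference, producing the required contradiction after sending $\alpha \to \infty$ and then $\varepsilon \downarrow 0$.

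The main subtlety, and the reason fundamental collections in Definition~\ref{def:results:continuity_estimate} must carry a $\pi_{\varepsilon,\alpha}$-component, is that the supremum in $\cH$ couples the $x$- and $y$-dependence of $\Lambda$ and $\mathcal{I}$: one must ensure that near-optimal $\pi_{\varepsilon,\alpha}$ stay in a common compact set uniformly in $\alpha$, so that the continuity estimate is applicable simultaneously to both the $\Lambda$- and $\mathcal{I}$-terms. This uniformity is guaranteed precisely by the compactness of $\Theta_{K,M}$ in Assumption~\ref{item:assumption:I:compact-sublevelsets} together with the growth estimate \ref{item:assumption:slow_regularity:controlled_growth}. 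In our specialization no new analysis is required beyond confirming these translations; the work of \cite{KrSc19} does the rest.
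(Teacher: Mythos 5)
Your proposal is correct and takes essentially the same route as the paper: the paper's proof likewise consists of invoking the abstract comparison principle of \cite{KrSc19} (their Proposition 3.4) with the identifications $\Theta = \cP(F)$ and $\Lambda(x,p,\pi) = \int V_{x,p}(z)\,\pi(\dd z)$, and noting that its hypotheses (Assumptions 2.14 and 2.15 there) are directly supplied by Assumptions \ref{assumption:results:regularity_of_V} and \ref{assumption:results:regularity_I}. Your additional sketch of the doubling-of-variables argument inside \cite{KrSc19} is consistent but not part of the paper's (deliberately short) proof.
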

	
	\begin{proof}
		This result follows from~\cite[Proposition 3.4]{KrSc19}. Recall that we use $\Theta = \cP(F)$ and $\Lambda(x,p,\pi) = \int V_{x,p}(z) \pi(\dd z)$. To apply the proposition, we have to verify \cite[Assumptions 2.14 and 2.15]{KrSc19}. These, however, are directly implied by Assumptions \ref{assumption:results:regularity_of_V} and \ref{assumption:results:regularity_I} of this paper.
	\end{proof}
	
	\subsection{Transfer of sub- and supersolutions based on the solution of an eigenvalue problem} \label{subsection:implications_from_ev}

	\begin{lemma}\label{lemma:viscosity_solutions_arrows_based_on_eigenvalue}
		Fix $\lambda > 0$ and $h \in C_b(E)$. 
		\begin{enumerate}[(a)]
			\item Every subsolution to $f - \lambda H_1 f = h$ is also a subsolution to $f - \lambda H_\dagger f = h$.
			\item Every supersolution to $f - \lambda H_1 f = h$ is also a supersolution to $f - \lambda H_\ddagger f = h$.
		\end{enumerate}
	\end{lemma}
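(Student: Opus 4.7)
The crucial observation is that the test-function parts agree, $f^\varepsilon_1=f^\varepsilon_\dagger$ and $f^\varepsilon_2=f^\varepsilon_\ddagger$, so only the image parts need to be compared; the bridge is the approximate principal-eigenvalue identity of Assumption~\ref{assumption:compact_setting:eigen_value}\ref{item:assumption:PI:solvePI}. For part~(a), fix a test $(f^\varepsilon_\dagger,H^\varepsilon_{\dagger,f})\in H_\dagger$. The function $f^\varepsilon_\dagger=(1-\varepsilon)f+\varepsilon\Upsilon$ has compact sublevel sets from below (since $\Upsilon$ is a containment function and $f\in C_l^\infty(E)$) and $u$ is bounded upper semi-continuous, so $u-f^\varepsilon_\dagger$ attains its supremum at some $x^*\in E$. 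Write $p^*:=\nabla f(x^*)$. For each $\delta>0$, invoke \ref{item:assumption:PI:solvePI} to obtain $g_\delta>0$ in $\cD(A^\mathrm{fast})$ such that $\phi_\delta:=\log g_\delta$ satisfies
\begin{equation*}
V_{x^*,p^*}(z)+e^{-\phi_\delta(z)}A^\mathrm{fast}_{x^*}e^{\phi_\delta}(z)\leq \cH(x^*,p^*)+\delta \quad \text{for all } z\in F,
\end{equation*}
equivalently, $H^\varepsilon_{1,f,\phi_\delta}(x^*,z)\leq H^\varepsilon_{\dagger,f}(x^*)+(1-\varepsilon)\delta$ uniformly in $z$.

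The technical obstacle is that the $H_1$ subsolution test with a \emph{fixed} $\phi_\delta$ returns a maximizing sequence converging to \emph{some} maximizer of $u-f^\varepsilon_\dagger$, which need not be the prescribed $x^*$ where the eigenvalue bound is sharp. To force convergence to $x^*$, I would perturb the test function: pick $\chi\in C^\infty(E)$ nonnegative, strictly positive off $x^*$, with $\chi(x^*)=0$ and $\nabla\chi(x^*)=0$ (a smoothed $|x-x^*|^2$ works), and set $\tilde f_\eta:=f+\eta\chi$. Then $\tilde f_\eta\in C_l^\infty(E)$, $\nabla\tilde f_\eta(x^*)=p^*$, and for any $\eta>0$ the point $x^*$ is the unique maximizer of $u-\big((1-\varepsilon)\tilde f_\eta+\varepsilon\Upsilon\big)$. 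Applying the $H_1$ subsolution property of $u$ to the test pair built from $\tilde f_\eta$ and $\phi_\delta$ yields a sequence $(y_m,w_m)$ with $y_m\to x^*$ by compactness and $w_m\to\bar z$ along a subsequence (since $F$ is compact). Passing to the limit using continuity of $V$ (Assumption~\ref{item:assumption:slow_regularity:continuity}) and of $y\mapsto A^\mathrm{fast}_y g_\delta$ (Assumption~\ref{assumption:compact_setting:convergence-of-nonlinear-generators-external}), and exploiting $\nabla\chi(x^*)=0$ so the perturbation leaves $p^*$ unchanged, gives
\begin{equation*}
u(x^*)-\lambda H^\varepsilon_{1,f,\phi_\delta}(x^*,\bar z)-h(x^*)\leq 0.
\end{equation*}
Combined with the eigenvalue bound at $x^*$ this reads $u(x^*)-\lambda H^\varepsilon_{\dagger,f}(x^*)-h(x^*)\leq\lambda(1-\varepsilon)\delta$, and sending $\delta\to 0$ certifies the $H_\dagger$ subsolution property by the constant sequence $x_n=x^*$ (any $z_n$, as $H^\varepsilon_{\dagger,f}$ is $z$-independent).

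Part~(b) is completely dual. For $f\in C_u^\infty(E)$, the test $f^\varepsilon_\ddagger=(1+\varepsilon)f-\varepsilon\Upsilon$ has compact sublevel sets from above, so $v-f^\varepsilon_\ddagger$ attains its infimum at some $x^*$; one uses the opposite bound $V_{x^*,p^*}(z)+e^{-\phi_\delta(z)}A^\mathrm{fast}_{x^*}e^{\phi_\delta}(z)\geq \cH(x^*,p^*)-\delta$ coming from the same approximate eigenvalue (the absolute-value statement in \ref{item:assumption:PI:solvePI} gives both sides), perturbs by $-\eta\chi$ to pin the unique minimizer at $x^*$, and transfers the $H_2$ supersolution test to $H_\ddagger$. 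The main difficulty throughout is this localization-vs-eigenvalue mismatch; the vanishing-gradient perturbation $\eta\chi$ is the clean device that resolves it without disturbing the gradient data entering the eigenvalue identity.
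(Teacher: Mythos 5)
Your proposal is correct and follows essentially the same route as the paper: the paper also fixes a maximizer $x_0$ of $u-f_1^\varepsilon$, invokes Assumption~\ref{assumption:compact_setting:eigen_value}\ref{item:assumption:PI:solvePI} to produce $\phi_\delta$ at $(x_0,\nabla f(x_0))$, and sends $\delta\to 0$. The only difference is organizational: the paper isolates your localization device (perturbing the test function by a nonnegative smooth function vanishing, with zero gradient, only at the designated maximizer, then passing to the limit along the maximizing sequence via compact sublevel sets and continuity) as a separate auxiliary statement, Lemma~\ref{lemma:strong_viscosity_solutions}, whereas you carry it out inline — the argument itself is the same.
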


	The definition of viscosity solutions,  Definition \ref{definition:viscosity_solutions}, is written down in terms of the existence of a sequence of points that maximizes $u-f$ or minimizes $v-f$. To prove the lemma above, we would like to have the subsolution and supersolution inequalities for any point that maximizes or minimizes the difference. This is achieved by the following auxiliary lemma.
	
	\begin{lemma} \label{lemma:strong_viscosity_solutions}
		Fix $\lambda > 0$ and $h \in C_b(E)$. 
		\begin{enumerate}[(a)]
			\item Let $u$ be a subsolution to $f - \lambda H_1 f = h$, then for all $(f,g) \in H_1$ and $x_0 \in E$ such that
			\begin{equation*}
				u(x_0) - f(x_0) = \sup_x u(x) - f(x)
			\end{equation*}
			there exists a $z \in F$ such that 
			\begin{equation*}
				u(x_0) - \lambda g(x_0,z)  \leq h(x_0).
			\end{equation*}
			\item Let $v$ be a supersolution to $f - \lambda H_2 f = h$, then for all $(f,g) \in H_2$ and $x_0 \in E$ such that
			\begin{equation*}
				v(x_0) - f(x_0) = \inf_x v(x) - f(x)
			\end{equation*}
			there exists a $z \in F$ such that 
			\begin{equation*}
				v(x_0) - \lambda g(x_0,z)  \geq h(x_0).
			\end{equation*}
		\end{enumerate}
	\end{lemma}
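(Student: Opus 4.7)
The plan is to prove part (a) by perturbing the test function so that the approximating sequence furnished by the viscosity subsolution definition is forced to concentrate at the prescribed maximiser $x_0$, while leaving the image of the test pair at $x_0$ unchanged; part (b) will then follow by the symmetric perturbation. Given $(f,g) \in H_1$ of the form $f = (1-\varepsilon) f_0 + \varepsilon \Upsilon$ with $g = H^\varepsilon_{1,f_0,\phi}$, and given $x_0$ realising $\sup_x(u-f)$, I would introduce an auxiliary smooth function $\chi \in C^\infty(E)$ with $\chi(x_0)=0$, $\chi(x)>0$ for $x\neq x_0$, $\nabla\chi(x_0)=0$, and with the coercivity property that any sequence with $\chi(x_n) \to 0$ admits a subsequence converging to $x_0$; the concrete choice $\chi(x) = |x-x_0|^2$ suffices on the convex set $E\subseteq\mathbb R^d$.

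For each $\delta>0$, I would form the admissible perturbation $f_0' := f_0 + \tfrac{\delta}{1-\varepsilon}\chi \in C_l^\infty(E)$, giving a new test pair $(f',g') := \bigl(f + \delta\chi,\, H^\varepsilon_{1,f_0',\phi}\bigr) \in H_1$. The algebraic heart of the argument is that $\nabla\chi(x_0)=0$ implies $\nabla f_0'(x_0) = \nabla f_0(x_0)$, so $g'(x_0,z) = g(x_0,z)$ for every $z\in F$. Since $\chi \geq 0$ with $\chi(x_0)=0$, a short calculation shows that $x_0$ still maximises $u-f'$ and that $\sup(u-f') = \sup(u-f)$.

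Applying the viscosity subsolution property of $u$ to $(f',g')$ delivers a sequence $(x_n, z_n) \in E \times F$ with $u(x_n) - f'(x_n) \to \sup(u-f')$ and $\limsup_n\bigl(u(x_n) - \lambda g'(x_n,z_n) - h(x_n)\bigr) \leq 0$. Rewriting the first convergence as $(u-f)(x_n) - \delta\chi(x_n) \to \sup(u-f)$ and using $(u-f)(x_n) \leq \sup(u-f)$ forces $\chi(x_n) \to 0$, so by the choice of $\chi$ we have $x_n \to x_0$ along a subsequence; compactness of $F$ yields $z_n \to z^* \in F$. Continuity of $f'$ upgrades $u(x_n) - f'(x_n) \to u(x_0) - f'(x_0)$ to $u(x_n) \to u(x_0)$, and continuity of $g'$ on $E\times F$ (which follows from Assumption~\ref{item:assumption:slow_regularity:continuity} on $(x,p)\mapsto V_{x,p}$ together with Assumption~\ref{assumption:compact_setting:convergence-of-nonlinear-generators-external} on $A^{\mathrm{fast}}_x$) allows passage to the limit in the second estimate, giving $u(x_0) - \lambda g'(x_0,z^*) \leq h(x_0)$, which is the required inequality at $z:=z^*$ since $g'(x_0,z^*) = g(x_0,z^*)$.

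Part (b) is identical in structure, with the mirrored perturbation $f' = f - \delta\chi$ realised by replacing $f_0$ in $H_2$ with $f_0 - \tfrac{\delta}{1+\varepsilon}\chi \in C_u^\infty(E)$; the sign change is precisely what is needed so that $x_0$ remains a minimiser of $v-f'$ with $\inf(v-f') = \inf(v-f)$, and the same extraction and passage to the limit applies with inequalities reversed. The only genuine obstacle in this programme is the condition $\nabla\chi(x_0)=0$: without it the perturbation would alter the momentum fed into $V$, and one would extract information about $g$ at the wrong momentum rather than at $\nabla f_0(x_0)$. The choice $\chi(x) = |x-x_0|^2$ resolves this cleanly, so the argument is otherwise a straightforward bookkeeping of viscosity perturbation together with continuity of the Hamiltonian graph in $E\times F$.
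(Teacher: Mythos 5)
Your proof is correct and takes essentially the same route as the paper: perturb the test function by a nonnegative smooth function vanishing only at $x_0$ and with vanishing gradient there, so that $x_0$ becomes the unique optimizer while $g(x_0,\cdot)$ is left unchanged, then use compactness of $F$ and continuity of the image function to pass to the limit in the subsolution (resp.\ supersolution) inequality. The only cosmetic differences are that you force the optimizing sequence to converge to $x_0$ directly via $\chi(x_n)\to 0$ for the quadratic penalty, where the paper first extracts a limit point using compactness of the sublevel sets of the test function, and that you make the condition $\nabla\chi(x_0)=0$ explicit, which the paper uses implicitly in its choice of $\hat f$.
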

	
	The following proof is inspired by \cite[Lemma 9.9]{FK06}.
	
	\begin{proof}
		We only prove (a). Let $u$ be a subsolution to $f - \lambda H_1 f = h$ and $(f,g) \in H_1$. For later use, we explicitly give the form of $g$. Fix $\varepsilon \in (0,1)$ and $\phi$ such that
		\begin{equation} \label{eqn:g_in_optimizing_point_lemma}
			g = H_{1,f,\phi}^\varepsilon
		\end{equation}
		as in Definition \ref{definition:H1H2}.

		\smallskip
		
		\textit{Step 1}: We start with a preliminary observation based on the compactness of the level sets of $f$ in $E \times F$. The compactness implies that the optimizing sequence $(x_n,z_n)$ in the definition of the notion of a viscosity subsolution allows for a converging subsequence. Using the continuity of all functions involved, we thus find the existence of a point $(x_0,z) \in E \times F$ such that
		\begin{gather}
			u(x_0) - f(x_0) = \sup_x u(x) - f(x), \label{eqn:subsol_in_point1} \\
			u(x_0) - \lambda g(x_0,z)  \leq h(x_0). \label{eqn:subsol_in_point2}
		\end{gather}

		\textit{Step 2}: We proceed by showing that for any $x_0$ such that \eqref{eqn:subsol_in_point1} is satisfied there is some $z$ such that \ref{eqn:subsol_in_point2} holds. Thus, let $x_0$ be such that $u(x_0) - f(x_0) = \sup_x u(x) - f(x)$. Pick a function $\hat{f} \in C_{cc}^\infty(E)$ such that $\hat{f}(x_0)  = 0$ and $\hat{f}(x) > 0$ for $x \neq x_0$. Set $f_0 = f+ \hat{f}$ and let $g_0$ be such that 
		\begin{equation*}
			g_0 = H_{1,f_0,\phi}^\varepsilon
		\end{equation*}
		where $\varepsilon$ and $\phi$ are as in \eqref{eqn:g_in_optimizing_point_lemma}. It follows that $(f_0,g_0) \in H_1$. 
		
		\smallskip

		By construction $x_0$ is the unique point such that $u(x_0) - f_0(x_0) = \sup_x u(x) - f_0(x)$, so that by the sub-solution property studied in step 1 for $(f_0,g_0)$ instead of $(f,g)$, we find the existence of $z_0$ such that
		\begin{equation*}
			u(x_0) - \lambda g(x_0,z_0) = u(x_0) - \lambda g_0(x_0,z_0) \leq h_0(x_0).
		\end{equation*}
		As $\nabla f_0(x_0) = \nabla f(x_0)$ and $g(x_0,z_0)$ and $g_0(x_0,z_0)$ only depend on $f$ and $f_0$ via their derivatives at $x_0$ it follows that $g_0(x_0,z_0) = g(x_0,z_0)$. We thus find
		\begin{equation*}
			u(x_0) - \lambda g(x_0,z) = u(x_0) - \lambda g_0(x_0,z) \leq h_0(x_0)
		\end{equation*}
		establishing the claim.
	\end{proof}

	\begin{proof}[Proof of Lemma \ref{lemma:viscosity_solutions_arrows_based_on_eigenvalue}]
		We only prove the subsolution statement. Fix $\lambda > 0$ and $h \in C_b(E)$. 
		
		Let $u$ be a subsolution of $f - \lambda H_1 f = h$. We prove it is also a subsolution of $f - \lambda H_\dagger f = h$. Let $f^\varepsilon_1 = (1-\varepsilon)f + \varepsilon \Upsilon \in \cD(H_1)$ and let $x_0$ be such that
		\begin{equation*}
			u(x_0) - f^\varepsilon_1(x_0) = \sup_x u(x) - f_1^\varepsilon(x).
		\end{equation*}
		For each $\delta > 0$ we find by Assumption \ref{assumption:compact_setting:eigen_value} a function $\phi_\delta$ such that $e^{\phi_\delta} \in \cD(A^\mathrm{fast})$ and
		\begin{equation} \label{eqn:inequality_based_on_ev}
			\cH(x,p) \geq V_{x_0,\nabla f(x_0)}(z) - e^{-\phi_\delta(z)}\left(A_{x_0}^\mathrm{fast} e^{\phi_\delta}\right)(z) - \delta
		\end{equation}
		for all $z \in F$. As
		\begin{equation*}
			\left(f^\varepsilon_1, (1-\varepsilon) \left( V_{x,\nabla f(x)}(z) + e^{-\phi_\delta(z)} A^\mathrm{fast}_x e^{\phi_\delta(z)}\right) + \varepsilon C_\Upsilon \right) \in H_1,
		\end{equation*}
		we find by the subsolution property of $u$ and Lemma \ref{lemma:strong_viscosity_solutions} that there exists $z_0$ such that
		\begin{align*}
			h(x_0) & \geq u(x_0) - \lambda \left((1-\varepsilon) \left( V_{x_0,\nabla f(x_0)}(z_0) + e^{-\phi_\delta(z_0)} A^\mathrm{fast}_{x_0} e^{\phi_\delta(z_0)}\right) + \varepsilon C_\Upsilon\right) \\
			& \geq	u(x_0) - \lambda \left((1-\varepsilon)  \cH(x_0,\nabla f(x_0)) + \varepsilon C_\Upsilon\right) - \lambda(1-\varepsilon)\delta.
		\end{align*}
		where the second inequality follows by \eqref{eqn:inequality_based_on_ev}.
		
		Sending $\delta \rightarrow 0$ establishes that $u$ is a subsolution for $f - \lambda H_\dagger f = h$.
	\end{proof}

	\subsection{Transfer of sub- and supersolutions based on compact containment} \label{subsection:implications_from_compact_containment}

	The operator $H$ and $\bfH$ are related to $H_1, H_2$ and $H_\dagger,H_\ddagger$ by the following two Lemma's respectively.
	
	\begin{lemma} \label{lemma:viscosity_solutions_compactify1}
		Fix $\lambda > 0$ and $h \in C_b(E)$. 
		\begin{enumerate}[(a)]
			\item Every subsolution to $f - \lambda H f = h$ is also a subsolution to $f - \lambda H_1 f = h$.
			\item Every supersolution to $f - \lambda H f = h$ is also a supersolution to $f - \lambda H_2 f = h$.
		\end{enumerate}
	\end{lemma}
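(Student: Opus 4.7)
The plan is to prove part (a), the subsolution statement; part (b) will follow symmetrically. The strategy is: approximate the $H_1$ test function $f_1^\varepsilon = (1-\varepsilon)f + \varepsilon \Upsilon$ by a compactly-constant test function admissible for $H$, pair it with a rescaled $\phi^*$, and transfer the $H$-subsolution inequality to the $H_1$-subsolution inequality using convexity estimates.

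Let $u$ be a viscosity subsolution of $f - \lambda H f = h$, and fix a pair $(f_1^\varepsilon, H_{1,f,\phi}^\varepsilon) \in H_1$ with $f \in C_l^\infty(E)$, $\varepsilon \in (0,1)$ and $\phi$ admissible. Because $\Upsilon$ is a containment function (Assumption~\ref{assumption:results:regularity_of_V}\ref{item:assumption:slow_regularity:compact_containment}) and $f$ is lower bounded, $f_1^\varepsilon$ has compact sublevel sets, so the bounded upper semi-continuous function $u - f_1^\varepsilon$ attains its supremum at some $x_0 \in E$. First I will approximate: define $f_k := \iota_k \circ f_1^\varepsilon \in C_{cc}^\infty(E)$ using a smooth truncation $\iota_k$ that acts as the identity for $r \leq k$ and as the constant $k+1$ for $r \geq k+2$, so that $f_k = f_1^\varepsilon$ on a neighbourhood of $x_0$ as soon as $k$ exceeds $f_1^\varepsilon(x_0)$. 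Adding a bump $\hat f \in C_{cc}^\infty(E)$ with $\hat f(x_0) = 0$, $\nabla \hat f(x_0) = 0$ and $\hat f > 0$ elsewhere, the function $g_k := f_k + \hat f$ again lies in $C_{cc}^\infty(E)$; for $k$ sufficiently large, $u - g_k$ is uniquely maximized at $x_0$ with the same supremum value as $u - f_1^\varepsilon$ (using that $u$ is bounded, so the constant region of $g_k$ contributes less than $\sup(u - f_1^\varepsilon)$).

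Next I set $\phi^* := (1-\varepsilon)\phi$, which is admissible by Assumption~\ref{assumption:compact_setting:eigen_value}\ref{item:assumption:PI:domain} since $e^{\phi^*} = (e^\phi)^{1-\varepsilon} \in \cD(A^\mathrm{fast})$. Applying the $H$-subsolution property of $u$ to $(g_k, H_{g_k, \phi^*}) \in H$, the uniqueness of the maximizer together with compactness of $F$ and continuity of the map $(x,z) \mapsto H_{g_k,\phi^*}(x,z)$ extracts a limit point $z_0 \in F$ with $u(x_0) - \lambda H_{g_k, \phi^*}(x_0, z_0) \leq h(x_0)$. At $x_0$ we have $\nabla g_k(x_0) = \nabla f_1^\varepsilon(x_0) = (1-\varepsilon) \nabla f(x_0) + \varepsilon \nabla \Upsilon(x_0)$, and two convexity estimates close the argument. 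Convexity of $p \mapsto V_{x,p}(z)$ (Assumption~\ref{assumption:results:regularity_of_V}\ref{item:assumption:slow_regularity:convexity}) together with $V_{x,\nabla\Upsilon(x)}(z) \leq C_\Upsilon$ gives $V_{x_0,\nabla g_k(x_0)}(z_0) \leq (1-\varepsilon) V_{x_0,\nabla f(x_0)}(z_0) + \varepsilon C_\Upsilon$. Convexity of the nonlinear log-Laplace map $\psi \mapsto e^{-\psi} A^\mathrm{fast}_{x_0} e^\psi$ at zero -- a standard consequence of H\"older's inequality applied to the Markov semigroup generated by $A^\mathrm{fast}_{x_0}$, combined with $A^\mathrm{fast}_{x_0} \bONE = 0$ -- gives $e^{-\phi^*(z_0)} A^\mathrm{fast}_{x_0} e^{\phi^*}(z_0) \leq (1-\varepsilon) e^{-\phi(z_0)} A^\mathrm{fast}_{x_0} e^\phi(z_0)$. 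Adding the two inequalities yields $H_{g_k, \phi^*}(x_0, z_0) \leq H_{1,f,\phi}^\varepsilon(x_0, z_0)$, so $u(x_0) - \lambda H_{1,f,\phi}^\varepsilon(x_0, z_0) \leq h(x_0)$, and the constant sequence $(x_0, z_0)$ verifies the $H_1$-subsolution definition.

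The main obstacle is that $H_1$ carries the factor $(1-\varepsilon)$ in front of \emph{both} the $V$-term and the $A^\mathrm{fast}$-term, whereas $H$ has no such factor. Convexity in $p$ handles the $V$-term directly, but absorbing the same factor into the fast-operator term is what forces the rescaling $\phi \mapsto (1-\varepsilon)\phi$ and the appeal to convexity of the nonlinear log-Laplace operator; this latter convexity, while standard, does need to be verified or invoked carefully. The supersolution statement (b) will be obtained by the mirror construction: $f_2^\varepsilon = (1+\varepsilon) f - \varepsilon \Upsilon$ (whose shape forces $u - f_2^\varepsilon$ to attain its infimum on a compact set), the choice $\phi^* = (1+\varepsilon)\phi$ (admissible by closure of $\cD(A^\mathrm{fast})$ under composition with smooth maps, cf.\ the remark following Assumption~\ref{assumption:compact_setting:eigen_value}), and the reverse inequalities $V_{x, \nabla f_2^\varepsilon(x)}(z) \geq (1+\varepsilon) V_{x, \nabla f(x)}(z) - \varepsilon C_\Upsilon$ and $e^{-\phi^*} A^\mathrm{fast}_x e^{\phi^*} \geq (1+\varepsilon) e^{-\phi} A^\mathrm{fast}_x e^\phi$, both of which come from the same convexity estimates rearranged.
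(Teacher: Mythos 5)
Your proof of part (a) is correct and is essentially the paper's own argument: truncate $f_1^\varepsilon=(1-\varepsilon)f+\varepsilon\Upsilon$ via the containment function to obtain a test function in $C_{cc}^\infty(E)$, pair it with the rescaled exponent $(1-\varepsilon)\phi$ (admissible by Assumption~\ref{assumption:compact_setting:eigen_value}\ref{item:assumption:PI:domain}), and use convexity of $p\mapsto V_{x,p}(z)$ together with convexity of $\psi\mapsto e^{-\psi}A^\mathrm{fast}_x e^{\psi}$ (and $V_{x,0}=0$, $V_{x,\nabla\Upsilon(x)}\leq C_\Upsilon$) to dominate the $H$-image by $H^\varepsilon_{1,f,\phi}$; your extra bump-function step, which lets you work at a single maximizer instead of along the defining sequence, is a harmless variant that parallels the paper's Lemma~\ref{lemma:strong_viscosity_solutions}. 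For part (b), which the paper dismisses as ``similar'', be aware that admissibility of $(1+\varepsilon)\phi$ is \emph{not} covered by Assumption~\ref{assumption:compact_setting:eigen_value}\ref{item:assumption:PI:domain} as stated (it only yields powers $g^{1-\varepsilon}$ with exponent below one), so your appeal to closure of $\cD(A^\mathrm{fast})$ under composition with smooth functions is an additional hypothesis taken from the remark rather than from the assumptions themselves --- a gap the paper's own ``similar'' claim shares.
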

	
	\begin{lemma} \label{lemma:viscosity_solutions_compactify2}
		Fix $\lambda > 0$ and $h \in C_b(E)$. 
		\begin{enumerate}[(a)]
			\item Every subsolution to $f - \lambda \bfH f = h$ is also a subsolution to $f - \lambda H_\dagger f = h$.
			\item Every supersolution to $f - \lambda \bfH f = h$ is also a supersolution to $f - \lambda H_\ddagger f = h$.
		\end{enumerate}
	\end{lemma}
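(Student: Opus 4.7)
My plan is to handle both parts by the same mechanism, focusing on (a); part (b) follows by an entirely parallel argument with $f^\varepsilon_\ddagger = (1+\varepsilon)f - \varepsilon \Upsilon$ replacing $f^\varepsilon_\dagger$ and inequalities reversed. The two ingredients I need are: (i) a pointwise convexity bound $\cH(x, \nabla f^\varepsilon_\dagger(x)) \leq H^\varepsilon_{\dagger,f}(x)$; and (ii) a way to feed $f^\varepsilon_\dagger$, which does not live in $\cD(\bfH) = C^\infty_{cc}(E)$, into the subsolution property of $u$ by localizing it to a genuine test function for $\bfH$.

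For (i), I observe that convexity of $p \mapsto V_{x,p}(z)$ (Assumption~\ref{item:assumption:slow_regularity:convexity}) transfers to $p \mapsto \cH(x,p)$ since the supremum in~\eqref{eqn:def_variational_Hamiltonian} is taken over maps affine in $V_{x,p}$. Writing $\nabla f^\varepsilon_\dagger(x) = (1-\varepsilon)\nabla f(x) + \varepsilon \nabla \Upsilon(x)$ as a convex combination, convexity yields $\cH(x, \nabla f^\varepsilon_\dagger(x)) \leq (1-\varepsilon)\bfH f(x) + \varepsilon \cH(x, \nabla \Upsilon(x))$. The last term is bounded by $C_\Upsilon$: the containment inequality $V_{x, \nabla \Upsilon(x)}(z) \leq C_\Upsilon$ combined with $\inf_\pi \cI(x,\pi) = 0$ (Assumption~\ref{item:assumption:I:zero-measure}) gives $\cH(x, \nabla \Upsilon(x)) \leq C_\Upsilon$, producing the pointwise bound.

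For (ii), I use that $u$ is bounded above while $f^\varepsilon_\dagger \geq \varepsilon \Upsilon + (1-\varepsilon)\inf f$ has compact sublevel sets, so the supremum of $u - f^\varepsilon_\dagger$ is attained on some compact set $K$. Using a smooth cutoff adapted to a compact neighborhood of $K$, I will construct $\tilde f \in C^\infty_{cc}(E)$ that agrees with $f^\varepsilon_\dagger$ on a neighborhood of $K$ and satisfies $\tilde f \geq f^\varepsilon_\dagger$ globally, with the exterior constant chosen large enough that every maximizer of $u - \tilde f$ stays inside the cutoff region. Then $\sup (u - \tilde f) = \sup (u - f^\varepsilon_\dagger)$ is attained at the same maximizers $x_0 \in K$, with $\nabla \tilde f(x_0) = \nabla f^\varepsilon_\dagger(x_0)$. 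Applying the subsolution property of $u$ for $\bfH$ to $\tilde f$ produces an approximating sequence that converges, by compactness, to such an $x_0$ along which $u(x_0) - \lambda \bfH \tilde f(x_0) \leq h(x_0)$. Combining with step (i) evaluated at $x_0$ delivers the subsolution inequality $u(x_0) - \lambda H^\varepsilon_{\dagger,f}(x_0) \leq h(x_0)$.

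The main obstacle I anticipate is the cutoff construction: I need $\tilde f = f^\varepsilon_\dagger$ on a genuine \emph{neighborhood} of every maximizer (to match gradients, not merely values), while simultaneously guaranteeing $\tilde f \geq f^\varepsilon_\dagger$ globally and forbidding spurious maxima of $u - \tilde f$ from appearing in the transition region. Once the cutoff is in place, the remaining ingredients---continuity of $\cH$ in $(x,p)$ from~\cite[Appendix~A]{KrSc19} and upper semicontinuity of $u$ for extracting a limiting maximizer---enter routinely. For part (b), the convex combination that powers the analogous estimate is $\nabla f(x) = \tfrac{1}{1+\varepsilon}\nabla f^\varepsilon_\ddagger(x) + \tfrac{\varepsilon}{1+\varepsilon}\nabla \Upsilon(x)$, which produces $\cH(x, \nabla f^\varepsilon_\ddagger(x)) \geq H^\varepsilon_{\ddagger,f}(x)$; the localization then confines minimizers of $v - f^\varepsilon_\ddagger$ to a compact set by exploiting that $f^\varepsilon_\ddagger \to -\infty$ as $\Upsilon \to \infty$.
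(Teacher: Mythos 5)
Your overall mechanism---the convexity estimate $\cH(x,\nabla f^\varepsilon_\dagger(x)) \leq (1-\varepsilon)\bfH f(x) + \varepsilon C_\Upsilon$, obtained from convexity of $p \mapsto V_{x,p}(z)$ under the variational formula together with $V_{x,\nabla\Upsilon(x)}(z)\leq C_\Upsilon$, combined with a localization that turns $f^\varepsilon_\dagger$ into an admissible test function for $\bfH$---is exactly the route the paper takes (it defers to Lemma 3.3 of \cite{KrSc19}; the in-paper proof of Lemma \ref{lemma:viscosity_solutions_compactify1} runs the identical scheme). Step (i) is correct, up to a small miscitation: the bound $\cH(x,\nabla\Upsilon(x))\leq C_\Upsilon$ uses nonnegativity of $\cI(x,\cdot)$ (immediate since constants lie in $\cD(A^\mathrm{fast})$), not Assumption~\ref{item:assumption:I:zero-measure}, which only provides a zero of $\cI$ and would serve a lower bound.

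The genuine gap is the cutoff in your step (ii). You require $\tilde f \in C^\infty_{cc}(E)$ with $\tilde f \geq f^\varepsilon_\dagger$ globally and $\tilde f = f^\varepsilon_\dagger$ near the maximizers. Such a function does not exist unless $E$ is compact: $\tilde f$ is constant outside a compact set, hence bounded, whereas $f^\varepsilon_\dagger = (1-\varepsilon)f + \varepsilon\Upsilon \geq (1-\varepsilon)\inf f + \varepsilon\Upsilon$ diverges along any sequence leaving all compacts, because the sublevel sets of $\Upsilon$ are compact and $f \in C_l^\infty(E)$ is bounded below (and in the compact case no cutoff is needed at all). The ``inequalities reversed'' version you invoke for (b) hits the same obstruction, since there $f^\varepsilon_\ddagger \to -\infty$. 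The viable construction is the opposite truncation, as in the paper's proof of Lemma \ref{lemma:viscosity_solutions_compactify1}: take a smooth nondecreasing $\gamma$ with $\gamma(r)=r$ for $r\leq M$ and $\gamma$ constant for $r\geq M+2$, and set $\tilde f := \gamma\circ f^\varepsilon_\dagger \leq f^\varepsilon_\dagger$, which lies in $C^\infty_{cc}(E)$ and agrees with $f^\varepsilon_\dagger$ on $\{f^\varepsilon_\dagger \leq M\}$. Since now $u-\tilde f \geq u - f^\varepsilon_\dagger$, spurious maximizers could a priori appear in the truncation region, and this is the point your domination requirement was meant to sidestep; it is instead ruled out by choosing $M$ large: for $f^\varepsilon_\dagger(x)\geq M$ one has $u(x)-\tilde f(x) \leq \sup u - M$, so picking $M > \sup u - \bigl(u(x_1) - f^\varepsilon_\dagger(x_1)\bigr)$ for one fixed reference point $x_1$ forces every near-maximizing sequence of $u-\tilde f$ into $\{f^\varepsilon_\dagger \leq M\}$, where values and gradients coincide with those of $f^\varepsilon_\dagger$. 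With that correction your step (i) closes the argument, and the mirror-image truncation (constant below a sufficiently negative level) yields the supersolution statement (b).
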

	
	Lemma \ref{lemma:viscosity_solutions_compactify2} has been proven in Lemma 3.3 of \cite{KrSc19}. Of Lemma \ref{lemma:viscosity_solutions_compactify1}, we will only prove (a), and its proof is similar to that that of Lemma 3.3 (a) of \cite{KrSc19}.
	
	\begin{proof}
		Fix $\lambda > 0$ and $h \in C_b(E)$. Let $u$ be a subsolution to $f - \lambda H f = h$. We prove it is also a subsolution to $f - \lambda H_1 f = h$. Fix $\varepsilon \in (0,1)$, $\phi$ such that $e^\phi \in \mathcal{D}(A^\mathrm{fast})$, and $f \in C_{l}^\infty(E)$, so that $(f^\varepsilon_1,H^\varepsilon_{1,f,\phi}) \in H_1$. We will prove that there are $(x_n,z_n)$ such that
		\begin{gather} 
			\lim_n u(x_n) - f^\varepsilon_1(x_n) = \sup_x u(x) - f^\varepsilon_1(x),\label{eqn:proof_lemma_conditions_for_subsolution_first} \\
			\limsup_n u(x_n) - \lambda H^\varepsilon_{1,f,\phi}(x_n,z_n) - h(x_n) \leq 0. \label{eqn:proof_lemma_conditions_for_subsolution_second}
		\end{gather}
		
		We have that $M := \varepsilon^{-1} \sup_y u(y) - (1-\varepsilon)f(y) < \infty$ as $u$ is bounded and $f \in C_l(E)$. It follows that the sequence $x_n$ along which the limit in \eqref{eqn:proof_lemma_conditions_for_subsolution_first} is attained is contained in the compact set $K := \left\{x \, | \, \Upsilon(x) \leq M \right\}$.

		Let $\gamma : \bR \rightarrow \bR$ be a smooth increasing function such that
		\begin{equation*}
			\gamma(r) = \begin{cases}
				r & \text{if } r \leq M, \\
				M + 1 & \text{if } r \geq M+2.
			\end{cases}
		\end{equation*}
		Denote by $f_\varepsilon$ the function on $E$ defined by 
		\begin{equation*}
			f_\varepsilon(x) := \gamma\left((1-\varepsilon)f(x) + \varepsilon \Upsilon(x) \right) = \gamma(f_1^\varepsilon(x)).
		\end{equation*}
		By construction $f_\varepsilon$ is smooth and constant outside of a compact set and thus lies in $\cD(H) = C_{cc}^\infty(E)$. As $e^\phi \in \mathcal{D}(A^\mathrm{fast})$ we have by Assumption \ref{item:assumption:PI:domain} that also $e^{(1-\varepsilon)\phi} \in \mathcal{D}(A^\mathrm{fast})$. We conclude that $(f_\varepsilon, H_{f_{\varepsilon},(1-\varepsilon)\phi}) \in H$. 
		
		As $u$ is a viscosity subsolution for $f - \lambda Hf = h$ there exist $x_n \in K \subseteq E$ (by our choice of $K$) and $z_n \in F$ with
		\begin{gather}
			\lim_n u(x_n) - f_\varepsilon(x_n) = \sup_x u(x) - f_\varepsilon(x), \label{eqn:visc_subsol_sup} \\
			\limsup_n u(x_n) - \lambda H_{f_{\varepsilon},(1-\varepsilon)\phi}(x_n,z_n) - h(x_n) \leq 0. \label{eqn:visc_subsol_upperbound}
		\end{gather}
		As $f_\varepsilon$ equals $f_1^\varepsilon$ on $K$, we have from \eqref{eqn:visc_subsol_sup} that also
		\begin{equation*}
			\lim_n u(x_n) - f_1^\varepsilon(x_n) = \sup_x u(x) - f_1^\varepsilon(x),
		\end{equation*}
		
		establishing \eqref{eqn:proof_lemma_conditions_for_subsolution_first}. Convexity of $p \mapsto V_{x,p}$ and $\psi \mapsto e^{-\psi(z)}\left(A_x^\mathrm{fast} e^\psi\right)(z)$ yields for arbitrary $(x,z)$ the elementary estimate
		\begin{align*}
			H_{f_{\varepsilon},(1-\varepsilon)\phi}(x,z) = & V_{x,\nabla f_\varepsilon}(z) + e^{-(1-\varepsilon)\phi(z)} \left(A^\mathrm{fast}_x e^{(1-\varepsilon)\phi}\right)(z) \\
			& \leq (1-\varepsilon) V_{x,\nabla f(x)}(z) + \varepsilon V_{x,\nabla \Upsilon(x)} + (1-\varepsilon)e^{-\phi(z)} \left(A^\mathrm{fast}_x e^{\phi}\right)(z) \\
			& = H^\varepsilon_{1,f,\phi}(x,z).
		\end{align*} 
		Combining this inequality with \eqref{eqn:visc_subsol_upperbound} yields
		\begin{multline*}
			\limsup_n u(x_n) - \lambda H^\varepsilon_{1,f,\phi}(x,z) - h(x_n) \\
			\leq \limsup_n u(x_n) - \lambda H_{f_{\varepsilon},(1-\varepsilon)\phi}(x_n,z_n) - h(x_n) \leq 0,
		\end{multline*}
		establishing \eqref{eqn:proof_lemma_conditions_for_subsolution_second}. This concludes the proof.
	\end{proof}

	\section{Action-integral representations of the rate function} \label{section:variational_representation}
	In this section, we will establish the two representations for the rate function given in Theorem \ref{theorem:LDP_general_Lagrangian_two_represenatations}. 
	The results are based on three main steps.
	\begin{itemize}
		\item We again utilize the result summarized in Figure \ref{figure:Hamiltonians} below. So far we have only given the viscosity solutions $R(\lambda)h$ to $f - \lambda Hf$ that arises from the large deviation structure of Theorem \ref{theorem:abstract_LDP}. The figure shows that $R(\lambda)h$ is the unique function that is a sub- and supersolution to the equations $f - \lambda H_\dagger f = h$ and $f - \lambda H_\ddagger f = h$ respectively.
		\item We use variational methods of \cite[Chapter 8]{FK06} to construct a viscosity solution $\bfR(\lambda) h$ to $f - \lambda \bfH f = h$ based on a running cost in terms of a Lagrangian $\cL$ that is the Legendre transform of $\cH$.
		
		By  Figure \ref{figure:Hamiltonians}, we must have $R(\lambda) h = \bfR(\lambda)h$. Starting from the equality of resolvents we work to an equality for semigroups. Afterwards, we dualize to obtain the first variational representation for $J$.
		\item As a final ingredient, we apply results from convex analysis \cite{Ro70,HULe01,Pa97} to rewrite the Lagrangian in terms of a double optimization to obtain our second representation for $J$.
	\end{itemize}
	
	\begin{figure}[h]
		\centering
		\begin{tikzpicture}
			\matrix (m) [matrix of math nodes,row sep=1em,column sep=4em,minimum width=2em]
			{
				{ } & H_1 &[7mm] H_\dagger &[5mm] { } \\
				H & { } & { } & \bfH \\
				{ }  & H_2 & H_\ddagger & { } \\};
			\path[-stealth]
			(m-2-1) edge node [above] {sub} (m-1-2)
			(m-2-1) edge node [below] {super \qquad { }} (m-3-2)
			(m-1-2) edge node [above] {sub \qquad { }} (m-1-3)
			(m-3-2) edge node [below] {super \qquad { }} (m-3-3)
			(m-2-4) edge node [above] {\qquad sub} (m-1-3)
			(m-2-4) edge node [below] {\qquad super} (m-3-3);
			
			\begin{pgfonlayer}{background}
				\node at (m-2-3) [rectangle,draw=blue!50,fill=blue!20,rounded corners, minimum width=1cm, minimum height=2.5cm]  {comparison};
			\end{pgfonlayer}
		\end{tikzpicture}
		\caption{Relations between Hamiltonians}
		\label{figure:Hamiltonians}
	\end{figure}
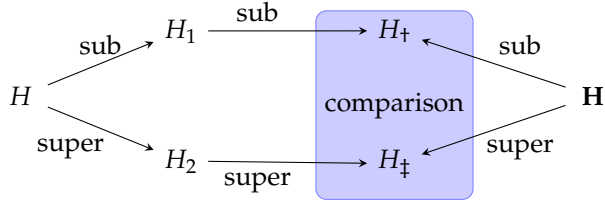
	Recall that $\bfH$ was based on the variational $\cH$ introduced in \eqref{eqn:def_variational_Hamiltonian}. The Legendre dual $\cL : E \times \bR^d \rightarrow [0,\infty]$ of $\cH$ is given by
	\begin{equation*}
		\cL(x,v) := \sup_{p\in\mathbb{R}^d} \left[\ip{p}{v} - \cH(x,p)\right].
	\end{equation*}
	In our new variational resolvent and semigroup, this Lagrangian plays the role of a running cost. In the following Theorem, $\cA\cC$ is the collection of absolutely continuous paths in $E$.
	\begin{proposition} \label{proposition:variational_resolvent}
		Suppose that Assumptions \ref{assumption:results:regularity_of_V}, \ref{assumption:results:regularity_I} and \ref{assumption:Hamiltonian_vector_field} are satisfied. For each $\lambda > 0$ and $h \in C_b(E)$, let $\bfR(\lambda)h$ be given by
		\begin{equation*}
			\bfR(\lambda) h(x) = \sup_{\substack{\gamma \in \mathcal{A}\mathcal{C}\\ \gamma(0) = x}} \int_0^\infty \lambda^{-1} e^{-\lambda^{-1}t} \left[h(\gamma(t)) - \int_0^t \mathcal{L}(\gamma(s),\dot{\gamma}(s))\right] \, \dd t.
		\end{equation*}
		Then $\bfR(\lambda)h$ is the unique viscosity solution to $f - \lambda \bfH f = h$.
	\end{proposition}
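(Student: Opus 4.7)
Uniqueness comes for free from the machinery already built. By Lemma \ref{lemma:viscosity_solutions_compactify2}, every subsolution (resp.\ supersolution) to $f - \lambda \bfH f = h$ is a subsolution to $f - \lambda H_\dagger f = h$ (resp.\ supersolution to $f - \lambda H_\ddagger f = h$), and Theorem \ref{theorem:comparison_from_otherpaper} then closes the comparison principle for $\bfH$. Hence any two viscosity solutions agree, and it remains only to exhibit one.

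\textbf{Existence, outline.} The plan is to show that the variational formula $\bfR(\lambda)h$ is such a viscosity solution, following the value-function approach of \cite[Chapter 8]{FK06}. The construction has three ingredients that need to be verified in our setting:
\begin{enumerate}[(i)]
\item \emph{Regularity of $\cH$ and $\cL$:} continuity and convexity-in-$p$ of $\cH$ together with coercivity of $\cL$ uniform on compacts of $E$. Continuity of $\cH$ was proved in \cite[Appendix A]{KrSc19}; convexity of $p \mapsto \cH(x,p)$ is immediate from Assumption \ref{item:assumption:slow_regularity:convexity} via the variational formula; coercivity of $\cL$ follows from the containment function $\Upsilon$. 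Together these give boundedness and upper semicontinuity of $\bfR(\lambda)h$.
\item \emph{Bootstrapping Assumption \ref{assumption:Hamiltonian_vector_field} from $V$ to $\cH$:} the inclusion $\partial_p V_{x,p}(z) \subseteq T_E(x)$ must be lifted to $\partial_p \cH(x,p) \subseteq T_E(x)$. This lift allows the supremum in $\bfR(\lambda)h$ to be restricted to curves remaining in $E$, and is crucial because $E$ has a boundary.
\item \emph{Dynamic programming and viscosity verification:} the standard concatenation argument yields
\begin{equation*}
\bfR(\lambda)h(x) = \sup_{\substack{\gamma \in \cA\cC \\ \gamma(0)=x}} \left\{\int_0^s \lambda^{-1} e^{-\lambda^{-1} t}\!\left[h(\gamma(t)) - \!\int_0^t \!\cL(\gamma(r),\dot\gamma(r))\,\dd r\right]\dd t + e^{-\lambda^{-1}s} \bfR(\lambda)h(\gamma(s))\right\}
\end{equation*}
for every $s > 0$. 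Testing this identity against $f \in C_{cc}^\infty(E)$ at an extremum of $\bfR(\lambda)h - f$, and invoking the Legendre duality $\langle p,v\rangle \leq \cH(x,p) + \cL(x,v)$ with equality at optimizers, produces the viscosity subsolution and supersolution inequalities against $\bfH$.
\end{enumerate}

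\textbf{Main obstacle.} The analytic heart of the argument is step (ii), the tangency condition. The Hamiltonian $\cH$ is a double supremum $\sup_\pi\{\int V_{x,p}(z)\pi(\dd z) - \cI(x,\pi)\}$, and optimal measures $\pi^\star$ need not depend continuously on $(x,p)$; one therefore cannot simply differentiate a specific maximizer. Instead one argues via the envelope representation $\partial_p \cH(x,p) \subseteq \overline{\mathrm{conv}}\bigcup_{\pi \text{ optimal}} \partial_p \Lambda(x,p,\pi)$ combined with $\partial_p \Lambda(x,p,\pi) \subseteq \overline{\mathrm{conv}}\bigcup_z \partial_p V_{x,p}(z)$; both inclusions preserve membership in $T_E(x)$ thanks to convexity and closedness of the tangent cone, which in turn rely on convexity of $E$ (Assumption \ref{assumption:Hamiltonian_vector_field}). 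Once (ii) is available, the verification in (iii) proceeds by a straightforward adaptation of \cite[Chapter 8]{FK06}; the only non-routine point is to restrict throughout to test functions in $C_{cc}^\infty(E)$, matching the domain of $\bfH$ in Definition \ref{definition_effectiveH}.
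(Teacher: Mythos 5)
Your uniqueness argument matches the paper's own framework exactly (Lemma \ref{lemma:viscosity_solutions_compactify2} together with Theorem \ref{theorem:comparison_from_otherpaper}), but for existence you take a genuinely different route. The paper does not redo the Feng--Kurtz value-function construction at all: it invokes \cite[Theorem 2.8]{KrSc19}, which already asserts that $\bfR(\lambda)h$ is the unique viscosity solution, so the only work left is checking its hypotheses --- \cite[Assumptions 2.14 and 2.15]{KrSc19} follow from Assumptions \ref{assumption:results:regularity_of_V} and \ref{assumption:results:regularity_I}, and \cite[Assumption 2.17]{KrSc19}, which is the tangency condition for $\Lambda(x,p,\pi)=\int V_{x,p}(z)\,\pi(\dd z)$ rather than for $\cH$, is deduced from $\partial_p V_{x,p}(z)\subseteq T_E(x)$ via \cite[Theorem 3]{Pa97} with $\varepsilon=0$ and the closedness and convexity of $T_E(x)$. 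You instead sketch a direct verification for $\bfH$, which forces you to lift tangency all the way to $\partial_p\cH(x,p)\subseteq T_E(x)$ through an envelope formula; this is workable (the maximizer set in $\pi$ is nonempty and compact since $\cP(F)$ is compact, $\Lambda$ is weakly continuous in $\pi$ and $\cI$ is lower semicontinuous, so the Ioffe--Valadier representation applies, and the $\partial_p\Lambda$ step is exactly the paper's Pa97 argument), and it correctly isolates the same key mechanism --- the subdifferential/integral interchange plus convexity of $E$ --- that the paper emphasizes. What your route costs is that your step (iii) is far from ``straightforward'': the dynamic programming identity for the resolvent, lower semicontinuity and compactness of the action functional, and above all the existence of absolutely continuous curves solving the differential inclusion $\dot\gamma(t)\in\partial_p\cH(\gamma(t),\nabla f(\gamma(t)))$ that remain in $E$ (which is where the tangency condition is actually consumed) constitute the bulk of \cite[Theorem 2.8]{KrSc19} and of \cite[Chapter 8]{FK06}, and none of it is carried out in your outline. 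Finally, a small slip in (i): superlinearity/coercivity of $\cL(x,\cdot)$ comes from finiteness of $\cH(x,p)$ for every $p$ (compactness of $F$ and $\cI\geq 0$), not from the containment function $\Upsilon$, whose role is the compact containment of near-optimal trajectories.
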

	Before giving the proof, we repeat the definition of the subdifferential set of a convex functional given in \eqref{eqn:subdifferential}. Let $\Phi : \bR^d \rightarrow (-\infty,\infty]$ be convex. Then the subdifferential set of $\Phi$ is given by
	\begin{equation*}
		\partial_p \Phi(p_0) := \left\{
		\xi \in \mathbb{R}^d \, \middle| \, \Phi(p) \geq \Phi(p_0) + \xi \cdot (p-p_0) \quad (\forall p \in \mathbb{R}^d)
		\right\}.
	\end{equation*}
	\begin{proof}[Proof of Proposition \ref{proposition:variational_resolvent}]
		As above, this result follows as a consequence of results in \cite{KrSc19}. In this case we consider \cite[Theorem 2.8]{KrSc19}. To apply this result, we have to verify Assumptions \cite[Assumption 2.14]{KrSc19}, \cite[Assumption 2.15]{KrSc19} and \cite[Assumption 2.17]{KrSc19}. As above, the first two assumptions are implied by Assumptions \ref{assumption:results:regularity_of_V} and \ref{assumption:results:regularity_I} of this paper. 
		
		To establish Assumption \cite[Assumption 2.17]{KrSc19}, we need to verify that
		\begin{equation*}
			\partial_p \left(\int V_{x,p}(z) \pi(\dd z) \right) \subseteq T_E(x), \qquad \text{for any } \qquad \pi \in \cP(F).
		\end{equation*}
		This follows immediately from \cite[Theorem 3]{Pa97} applied with $\varepsilon = 0$ and Assumption \ref{assumption:Hamiltonian_vector_field}.
	\end{proof}

	By the results summarized in Figure \ref{figure:Hamiltonians}, we can therefore conclude that $R(\lambda)h = \bfR(\lambda)h$ for all $h \in C_b(E)$ and $\lambda > 0$. Next, consider the semigroup $V(t)$ of Theorem \ref{theorem:abstract_LDP} and the variational semigroup $\bfV$ on $C_b(E)$ defined by
	\begin{equation*}
		\bfV(t) f(x) := \sup_{\substack{\gamma \in \mathcal{A}\mathcal{C}\\ \gamma(0) = x}} f(\gamma(t)) -  \int_0^t \mathcal{L}(\gamma(s),\dot{\gamma}(s)).
	\end{equation*}
	
	\begin{proposition}
		Suppose that Assumptions~\ref{assumption:results:regularity_of_V} and~\ref{assumption:results:regularity_I} are satisfied for $\Lambda$ and~$\mathcal{I}$, and that $\mathcal{H}$ satisfies Assumption~\ref{assumption:Hamiltonian_vector_field}.
		Then $V(t)f = \bfV(t)f$ for all $t \geq 0$ and $f \in C_b(E)$.
	\end{proposition}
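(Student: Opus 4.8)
The plan is to obtain the identity $V(t)=\bfV(t)$ from the equality of resolvents $R(\lambda)=\bfR(\lambda)$, which we have just recorded above, together with the exponential (Post--Widder type) formula that recovers each of the two semigroups from its resolvent. Concretely: Theorem~\ref{theorem:abstract_LDP} tells us that $V(t)f=\lim_{m\to\infty}R(t/m)^m f$ for every $f$ in a sequentially strictly dense set $D\subseteq C_b(E)$; if the analogous statement $\bfV(t)f=\lim_{m\to\infty}\bfR(t/m)^m f$ holds for the variational objects, then, since $R(\lambda)h=\bfR(\lambda)h$ for all $h\in C_b(E)$ and $\lambda>0$ by Proposition~\ref{proposition:variational_resolvent} and the comparison principle of Theorem~\ref{theorem:comparison_from_otherpaper} (passing through Lemmas~\ref{lemma:viscosity_solutions_compactify1}, \ref{lemma:viscosity_solutions_arrows_based_on_eigenvalue} and~\ref{lemma:viscosity_solutions_compactify2}), iterating gives $R(t/m)^m f=\bfR(t/m)^m f$ and hence $V(t)f=\bfV(t)f$ for $f\in D$.

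The substantive step is therefore to establish the exponential formula for the variational (Nisio) semigroup: for every $f\in C_b(E)$ and $t>0$,
\[
\lim_{m\to\infty}\vn{\bfR\!\left(\tfrac{t}{m}\right)^{\!m} f-\bfV(t)f}=0 .
\]
I would invoke the Nisio-semigroup theory of~\cite[Chapter 8]{FK06} (in the form already used to prove Proposition~\ref{proposition:variational_resolvent} via~\cite{KrSc19}), whose hypotheses are exactly of the type we have verified. The ingredients are: (i) $\bfV$ is a genuine one-parameter semigroup, $\bfV(t+s)=\bfV(t)\bfV(s)$, which follows by concatenating absolutely continuous paths and a measurable-selection argument producing near-optimal controls; (ii) $\bfR(\lambda)$ and $\bfV(t)$ are monotone and commute with the addition of constants, so that the Crandall--Liggett / exponential-formula machinery applies; (iii) the Lagrangian $\cL$, being the Legendre transform of the continuous Hamiltonian $\cH$, is lower semicontinuous and coercive, so that paths with bounded action stay in a fixed compact subset of $E$ --- this is where the containment function $\Upsilon$ of Assumption~\ref{assumption:results:regularity_of_V}\,\ref{item:assumption:slow_regularity:compact_containment} and Assumption~\ref{assumption:Hamiltonian_vector_field} enter, precisely as in the proof of Proposition~\ref{proposition:variational_resolvent}. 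Granting (i)--(iii), the exponential formula holds verbatim.

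To finish, one combines the two convergences on $D$ to get $V(t)f=\bfV(t)f$ for all $f\in D$ and $t\ge 0$, and then extends the identity to all of $C_b(E)$ using that $D$ is sequentially strictly dense and that both $V(t)$ and $\bfV(t)$ are strictly continuous --- for $V(t)$ this is part of Theorem~\ref{theorem:abstract_LDP} (strict continuity of $R(\lambda)$ passes to $V(t)$ through the exponential formula), and for $\bfV(t)$ it again follows from the coercivity of $\cL$ together with the compact containment from $\Upsilon$. I expect step (ii)--(iii) above to be the main obstacle: each ingredient is classical, but making the path-concatenation and measurable-selection argument for the semigroup property rigorous, and checking coercivity and compact containment in the present state space $E$ (which may be $n$-dependent at the prelimit level and carry a boundary), is where the genuine work lies; the remainder is bookkeeping around the diagram of Figure~\ref{figure:Hamiltonians}.
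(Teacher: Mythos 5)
Your proposal is correct and follows essentially the same route as the paper: equality of resolvents $R(\lambda)h=\bfR(\lambda)h$ via the comparison diagram, the exponential formula $R(t/m)^m f\to V(t)f$ on a sequentially strictly dense set $D$ from Theorem~\ref{theorem:abstract_LDP}, the analogous formula for the variational semigroup from the Nisio-semigroup theory of \cite[Chapter~8]{FK06} (the paper simply cites Lemma~8.18 there, rather than re-verifying the machinery as you outline), and finally extension from $D$ to $C_b(E)$ by sequential strict continuity of $V(t)$ and $\bfV(t)$ (the latter being \cite[Lemma~8.22]{FK06}).
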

	
	\begin{proof}
		By \eqref{eqn:convergence_resolvent_to_semigroup_abstract_LDP}, there is some sequentially strictly dense set $D \subseteq C_b(E)$ such that for $f \in D$
		\begin{equation} \label{eqn:convergence_R_to_V}
			\lim_{m \rightarrow \infty} \vn{R\left(\frac{t}{m}\right)^m f - V(t)f} = 0.
		\end{equation}
		Similarly, we find by \cite[Lemma 8.18]{FK06} that for all $f \in C_b(E)$ and $x \in E$
		\begin{equation} \label{eqn:convergence_bfR_to_bfV}
			\lim_{m \rightarrow \infty} R\left(\frac{t}{m}\right)^m f(x) = \bfV(t)f(x).
		\end{equation}
		As Figure \ref{figure:Hamiltonians} implies that $R(\lambda)h$ = $\bfR(\lambda)h$ for $h \in C_b(E)$, we conclude from \eqref{eqn:convergence_R_to_V} and \eqref{eqn:convergence_bfR_to_bfV} that $V(t)f = \bfV(t)f$ for all $t$ and $f \in D$.
		
		\smallskip
		
		Now recall that $D$ is sequentially strictly dense by assumption so that equality for all $f \in C_b(E)$ follows if $V(t)$ and $\bfV(t)$ are sequentially continuous. The first statement follows by Theorems \cite[Theorem 7.10]{Kr19} and \cite[Theorem 6.1]{Kr20} on which Theorem \ref{theorem:LDP_general} is based. The second statement follows by Lemma \cite[Lemma 8.22]{FK06}. We conclude that $V(t)f = \bfV(t)f$ for all $f \in C_b(E)$ and $t \geq 0$.
	\end{proof}
	
	The argument of the proof of \cite[Theorem 8.14]{FK06} combined with the fact that $v \mapsto \cL(x,v)$ is convex leads to the following result.
	
	\begin{lemma} \label{lemma:representation_rate_function_initial}
		Suppose that Assumptions~\ref{assumption:results:regularity_of_V} and~\ref{assumption:results:regularity_I} are satisfied for $\Lambda$ and~$\mathcal{I}$, and that $\mathcal{H}$ satisfies Assumption~\ref{assumption:Hamiltonian_vector_field}. Then the rate function $J$ of Theorem \ref{theorem:LDP_general} can be rewritten as 
		\begin{equation*}
			J(\gamma) = \begin{cases}
				J_0(\gamma(0)) + \int_0^\infty \cL(\gamma(s),\dot{\gamma}(s)) \dd s & \text{if } \gamma \in \cA\cC, \\
				\infty & \text{otherwise}.
			\end{cases}
		\end{equation*}
	\end{lemma}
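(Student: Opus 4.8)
The plan is to combine the identity $V(t)=\bfV(t)$ from the preceding proposition with the abstract representation \eqref{eqn:LDP_rate2} of $J$, and then to run the argument of \cite[Theorem 8.14]{FK06}, using the convexity of $v\mapsto\cL(x,v)$ to bypass the convex-relaxation step that appears there for general Hamiltonians. First I would record the structural facts about the Lagrangian that drive this machinery. Since $V_{x,0}\equiv 0$ by~\ref{item:assumption:slow_regularity:convexity} and $\cI(x,\cdot)\ge 0$ vanishes at some $\pi_x^0$ by~\ref{item:assumption:I:zero-measure}, one has $\cH(x,0)=0$, so $\cL(x,v)\ge\ip{0}{v}-\cH(x,0)=0$; as a supremum of affine maps $v\mapsto\cL(x,v)$ is convex and lower semicontinuous, with infimum $-\cH(x,0)=0$. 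Because $F$ is compact and $V$ is continuous, $\cH(x,\cdot)$ is finite and, by \cite[Appendix A]{KrSc19}, continuous, hence $\cL(x,\cdot)$ is superlinear, therefore coercive with compact sublevel sets and attaining its zero; joint lower semicontinuity of $(x,v)\mapsto\cL(x,v)$ and local uniformity of the coercivity on compact $x$-sets follow from continuity of $\cH$ together with the containment function $\Upsilon$. These are exactly the hypotheses under which the variational apparatus of \cite[Chapter 8]{FK06} applies.

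Next I would identify the conditional rate functions $J_t(y\mid x)=\sup_{f\in C_b(E)}[f(y)-V(t)f(x)]$ with a minimal action. Writing $L_t(x,y):=\inf\{\int_0^t\cL(\gamma(s),\dot\gamma(s))\,\dd s:\gamma\in\cA\cC,\ \gamma(0)=x,\ \gamma(t)=y\}$ and grouping paths in the definition of $\bfV$ by their endpoint yields $\bfV(t)f(x)=\sup_y[f(y)-L_t(x,y)]$, so that $J_t(y\mid x)=\sup_f\inf_{y'}[f(y)-f(y')+L_t(x,y')]$. The choice $y'=y$ gives $J_t(y\mid x)\le L_t(x,y)$; for the reverse, coercivity confines the inner infimum to a fixed compact set on which $y'\mapsto L_t(x,y')$ is lower semicontinuous, so approximating it from below by bounded Lipschitz functions (inf-convolutions) produces admissible $f$ with $\inf_{y'}[\,\cdots]\to L_t(x,y)$; hence $J_t(y\mid x)=L_t(x,y)$. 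I would also record that $L$ is superadditive, $L_{s+t}(x,z)\le L_s(x,y)+L_t(y,z)$, and that almost-minimal paths may be cut at intermediate times, exactly as in \cite[Chapter 8]{FK06}.

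It then remains, after substituting $J_{t_i-t_{i-1}}(\gamma(t_i)\mid\gamma(t_{i-1}))=L_{t_i-t_{i-1}}(\gamma(t_{i-1}),\gamma(t_i))$ into \eqref{eqn:LDP_rate2}, to show that $\sup_k\sup_{0=t_0<\dots<t_k,\ t_i\in\Delta_\gamma^c}\sum_{i=1}^k L_{t_i-t_{i-1}}(\gamma(t_{i-1}),\gamma(t_i))$ equals $\int_0^\infty\cL(\gamma(s),\dot\gamma(s))\,\dd s$ for $\gamma\in\cA\cC$ and $+\infty$ otherwise. For $\gamma\in\cA\cC$ the path is continuous, so $\Delta_\gamma^c=\bR^+$; the upper bound follows by using $\gamma$ itself as a competitor on each subinterval, and the matching lower bound from Jensen's inequality for the convex map $v\mapsto\cL(\gamma(s),v)$ on each subinterval, combined with a refinement of the partition and the lower semicontinuity from the first step. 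For $\gamma\notin\cA\cC$, a partition sum remaining bounded along a cofinal family of partitions would force, via the superlinear bound $\cL(x,v)\ge\theta(|v|)$ (uniform over the compact set in which such a $\gamma$ must remain), both finite total $\theta$-variation and absolute continuity of $\gamma$, a contradiction; in particular a genuine càdlàg jump already makes the sums diverge, which is why restricting to $t_i\in\Delta_\gamma^c$ is harmless. This reproduces the argument of \cite[Theorem 8.14]{FK06}, now unencumbered by the convex-relaxation bookkeeping since $\cL(x,\cdot)$ is already convex.

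The steps I expect to be most delicate are the duality $J_t(\cdot\mid\cdot)=L_t(\cdot,\cdot)$ and the partition-refinement limit: both hinge on the coercivity, compact sublevel sets and lower semicontinuity of $\cL$ established in the first step, and the refinement argument must use the convexity of $v\mapsto\cL(x,v)$ and the local uniformity of its superlinearity simultaneously in order to squeeze the supremum of $\sum L$ up to the action integral and to exclude non-absolutely-continuous trajectories.
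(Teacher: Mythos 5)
Your proposal is correct and takes essentially the same route as the paper: the paper's ``proof'' of this lemma is a single sentence invoking the argument of \cite[Theorem~8.14]{FK06} together with the convexity of $v\mapsto\cL(x,v)$, relying on $V(t)=\bfV(t)$ from the preceding proposition. Your expansion — recording nonnegativity, convexity, lower semicontinuity and superlinearity of $\cL$ via $\cH(x,0)=0$ and finiteness of $\cH(x,\cdot)$, establishing $J_t(y\mid x)=L_t(x,y)$ by a Nisio-semigroup/inf-convolution duality, and then squeezing the partition supremum onto the action integral for $\cA\cC$ paths while excluding non-absolutely-continuous paths via the coercivity bound — is precisely the content of that Feng--Kurtz argument, with the convex-relaxation step bypassed exactly as the paper indicates.
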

	
	The final step to derive Theorem \ref{theorem:LDP_general} from that of Theorem \ref{theorem:abstract_LDP} is to establish that we can rewrite $\cL$. This rewrite is a consequence of results in convex analysis and follows under much weaker assumptions: convexity of $\pi \mapsto \cI(x,\pi)$ and $p \mapsto V_{x,p}(z)$, which are satisfied in our setting.

	\begin{proposition} \label{proposition:representation_of_Lagrangian}
		Suppose that Assumptions \ref{assumption:results:regularity_of_V} and \ref{assumption:results:regularity_I} are satisfied.
		Let $\Phi(v,\pi)$ be the set of functions $w \in L^1(F,\pi)$ such that 
		\begin{equation*}
			\int w(z) \pi(\dd z) = v.
		\end{equation*}
		Then we have 
		\begin{equation*}
			\cL(x,v) = \inf_\pi \inf_{w \in \Phi(v,\pi)} \left\{\int \cL_z(x,w(z)) \pi(\dd z) + \cI(x,\pi)  \right\}.
		\end{equation*}
	\end{proposition}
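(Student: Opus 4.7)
The plan is to reduce the identity to two classical convex-analysis operations: a minimax swap via Sion's theorem, followed by a Rockafellar-type integral duality that identifies the inner Legendre transform with an infimum over functions of fixed mean.

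Substituting the variational formula \eqref{eqn:def_variational_Hamiltonian} for $\cH$ into the definition of $\cL$ gives
\begin{equation*}
  \cL(x,v) = \sup_{p \in \bR^d}\inf_{\pi \in \cP(F)}
  \left\{\ip{p}{v} - \int V_{x,p}(z)\,\pi(\dd z) + \cI(x,\pi)\right\}.
\end{equation*}
Denote by $\Psi(p,\pi)$ the expression inside the braces. For fixed $p$, the map $\pi \mapsto \Psi(p,\pi)$ is convex (affine plus $\cI(x,\cdot)$) and lower semi-continuous by \ref{item:assumption:I:lsc} combined with continuity of $z \mapsto V_{x,p}(z)$ from \ref{item:assumption:slow_regularity:continuity}. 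For fixed $\pi$, the map $p \mapsto \Psi(p,\pi)$ is concave by \ref{item:assumption:slow_regularity:convexity} and continuous by \ref{item:assumption:slow_regularity:continuity}. Since $\cP(F)$ is convex and weakly compact (as $F$ is compact Polish), Sion's minimax theorem applies and yields
\begin{equation*}
  \cL(x,v) = \inf_{\pi \in \cP(F)} \left\{\cI(x,\pi) + \Lambda_\pi^*(x,v)\right\},
  \qquad \Lambda_\pi^*(x,v) := \sup_{p \in \bR^d}\left[\ip{p}{v} - \int V_{x,p}(z)\,\pi(\dd z)\right].
\end{equation*}

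The second step is to identify $\Lambda_\pi^*(x,v) = \inf_{w \in \Phi(v,\pi)} \int \cL_z(x,w(z))\,\pi(\dd z)$. The inequality ``$\leq$'' is immediate from pointwise Fenchel--Young: for any $p \in \bR^d$ and $w \in \Phi(v,\pi)$,
\begin{equation*}
  \ip{p}{v} - \int V_{x,p}(z)\,\pi(\dd z) = \int\bigl[\ip{p}{w(z)} - V_{x,p}(z)\bigr]\,\pi(\dd z) \leq \int \cL_z(x,w(z))\,\pi(\dd z).
\end{equation*}
For the reverse inequality I would set $G(v) := \inf_{w \in \Phi(v,\pi)} \int \cL_z(x,w(z))\,\pi(\dd z)$, verify that $G$ is proper, convex, and lower semi-continuous, and compute $G^*(p)$. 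Interchanging the supremum over $w \in L^1(F,\pi)$ with the integral in $z$ gives $G^*(p) = \int \sup_\xi [\ip{p}{\xi} - \cL_z(x,\xi)]\,\pi(\dd z) = \int V_{x,p}(z)\,\pi(\dd z)$, so the Fenchel--Moreau theorem forces $G = \Lambda_\pi^*(x,\cdot)$.

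The main obstacle is the measurable-selection step in computing $G^*$: exchanging the $w$-supremum and the $z$-integral requires that $(z,\xi) \mapsto \cL_z(x,\xi)$ be a convex normal integrand, which follows from joint continuity of $(x,p,z)\mapsto V_{x,p}(z)$ in \ref{item:assumption:slow_regularity:continuity} together with convexity in $p$ from \ref{item:assumption:slow_regularity:convexity}; this is the classical integral-conjugate result of \cite{Ro70}. Lower semi-continuity and properness of $G$ are where \ref{item:assumption:slow_regularity:controlled_growth} enters, since it controls the sublevel sets of the integrand uniformly in $z$ and thus permits the standard closedness argument for infimal convolutions with the affine constraint $\int w\,\dd\pi = v$. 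Combining the two steps yields the stated representation.
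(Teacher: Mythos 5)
Your reduction via Sion's minimax theorem (using convexity of $\pi \mapsto \cI(x,\pi)$ and compactness of $\cP(F)$) is exactly the paper's first step. For the remaining identity $\sup_p\left[\ip{p}{v} - \int V_{x,p}\,\dd\pi\right] = \inf_{w\in\Phi(v,\pi)}\int\cL_z(x,w(z))\,\pi(\dd z)$ you take a genuinely different route: biconjugation of the value function $G$. The paper instead works pointwise on $\text{rel.\ int.\ dom}\,\cL_\pi(x,\cdot)$, picks $p^*\in\partial_v\cL_\pi(x,v)$, invokes the measurable-selection result of \cite[Theorem 3]{Pa97} to produce $w^*\in\Phi(v,\pi)$ with $w^*(z)\in\partial_p V_{x,p^*}(z)$ $\pi$-a.s., and then extends from the relative interior to the whole domain by convexity (\cite[Corollary 7.3.4]{Ro70}). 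Your route trades the subgradient selection and relative-interior bookkeeping for Rockafellar's interchange theorem for conjugates of integral functionals on the decomposable space $L^1(F,\pi)$ (note that this interchange theorem is not in the book \cite{Ro70}; you would need Rockafellar's papers on integral functionals or Rockafellar--Wets) plus a direct proof that $G$ is proper, convex and lower semi-continuous, after which Fenchel--Moreau closes the argument. The paper's structure is designed precisely to avoid ever proving lower semi-continuity of the value function; yours cannot avoid it.

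That verification is where your sketch has a genuine gap: you attribute lsc and properness of $G$ to Assumption \ref{item:assumption:slow_regularity:controlled_growth}, but that condition only compares $V_{x,p}(z_1)$ with $V_{x,p}(z_2)$ across points of $F$ and yields no coercivity of the integrand $\cL_z(x,\cdot)$, which is what closedness of the infimal projection under the constraint $\int w\,\dd\pi = v$ actually requires (without uniform superlinearity, a minimizing sequence $w_n$ can concentrate and the value function may fail to be lsc). The ingredient that does the work is Assumption \ref{item:assumption:slow_regularity:continuity} together with compactness of $F$: since $p\mapsto V_{x,p}\in C(F)$ is continuous in the supremum norm, $C_R:=\sup_{|p|\le R}\sup_{z} V_{x,p}(z)<\infty$, hence $\cL_z(x,\xi)\ge R|\xi|-C_R$ for every $R$, uniformly in $z$. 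This uniform superlinearity gives, via de la Vall\'ee-Poussin and Dunford--Pettis, weak $L^1(\pi)$-compactness of sequences $w_n$ with bounded cost, and weak lower semi-continuity of $w\mapsto\int \cL_z(x,w(z))\,\pi(\dd z)$ (convex normal integrand, using \ref{item:assumption:slow_regularity:convexity} and joint continuity for normality), so $G$ is lsc; properness follows from a measurable selection of minimizers of $\cL_z(x,\cdot)$, which are uniformly bounded by the same coercivity bound and have value $0$ since $V_{x,0}\equiv 0$. With these points supplied (and the interchange step justified by normality of the integrand, decomposability of $L^1(\pi)$ and finiteness of $\int V_{x,p}\,\dd\pi$), your argument is complete and yields the same conclusion as the paper's.
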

	
	The proof below only uses results from convex analysis, see \cite{Ro70,HULe01,Pa97}. These results have been stated for $\bR^d$, which is also the setting to which we restrict ourselves in this paper. We believe, however, that the result should extend to a more general setting, but we were unable to find their generalizations in the literature on convex analysis.
	
	Before we prove Proposition \ref{proposition:representation_of_Lagrangian}, we start with two auxiliary lemmas.
	\begin{lemma} \label{lemma:DV_is_convex}
		Consider the context of Proposition \ref{proposition:representation_of_Lagrangian}.	For every $x \in E$, the map $\pi \mapsto \cI(x,\pi)$ is convex.
	\end{lemma}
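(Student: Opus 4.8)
The plan is to read off convexity directly from the variational definition of $\cI(x,\cdot)$: it is a pointwise supremum of affine functionals of $\pi$, and any such supremum is convex. Accordingly, the argument will not use any of the regularity hypotheses beyond the compactness of $F$ and the fact that $A^\mathrm{fast}_x$ maps its domain into $C(F)$.

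Concretely, I would fix $x \in E$ and, for each admissible test function $g \in \cD(A^\mathrm{fast})$ with $\inf g > 0$, introduce the functional
\begin{equation*}
	\Phi_g(\pi) := - \int \frac{A^\mathrm{fast}_x g(z)}{g(z)} \, \pi(\dd z), \qquad \pi \in \cP(F).
\end{equation*}
First I would note that $A^\mathrm{fast}_x g \in C(F)$ and $g$ is bounded away from $0$ on the compact space $F$, so $A^\mathrm{fast}_x g / g$ is a bounded continuous function on $F$; hence $\Phi_g$ is a well-defined, finite, and \emph{affine} functional on $\cP(F)$, being the restriction to probability measures of a linear functional on finite signed measures. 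Since by definition $\cI(x,\pi) = \sup_g \Phi_g(\pi)$, with the supremum taken over all such $g$, convexity follows from the elementary observation that a supremum of affine functions is convex: for $\pi_0,\pi_1 \in \cP(F)$, $\lambda \in [0,1]$, and $\pi_\lambda := (1-\lambda)\pi_0 + \lambda \pi_1 \in \cP(F)$, affinity of each $\Phi_g$ gives
\begin{equation*}
	\Phi_g(\pi_\lambda) = (1-\lambda)\Phi_g(\pi_0) + \lambda \Phi_g(\pi_1) \leq (1-\lambda)\,\cI(x,\pi_0) + \lambda\,\cI(x,\pi_1),
\end{equation*}
and taking the supremum over $g$ on the left-hand side yields $\cI(x,\pi_\lambda) \leq (1-\lambda)\,\cI(x,\pi_0) + \lambda\,\cI(x,\pi_1)$, the desired inequality (vacuously true when the right-hand side is $+\infty$).

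There is essentially no obstacle here; the only point deserving a sentence of care is that each $\Phi_g$ is genuinely finite and affine on all of $\cP(F)$, which is exactly where the compactness of $F$ from Condition~\ref{condition:compact_setting:state-spaces} and the mapping property $A^\mathrm{fast}_x g \in C(F)$ are used. No further properties of the pair $(V,\cI)$ enter into this lemma.
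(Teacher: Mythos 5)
Your proof is correct and follows essentially the same route as the paper, which also observes that $\cI(x,\cdot)$ is by definition a supremum over test functions $\phi$ of linear (affine) functionals $\pi \mapsto -\int \frac{A^\mathrm{fast}_x \phi}{\phi}\,\dd\pi$ and is therefore convex. Your additional remarks on finiteness of each functional via compactness of $F$ and the mapping property of $A^\mathrm{fast}_x$ are fine but not needed beyond what the paper states.
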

	\begin{proof}
		Fix $x \in E$. As 
		\begin{equation*}
			\cI(x,\pi) = \sup_{\substack{\phi \in \cD(A^\mathrm{fast}) \\ \inf \phi > 0}} - \int \frac{A^\mathrm{fast}_x \phi(z)}{\phi(z)} \pi(\dd z)
		\end{equation*}
		is given as the supremum over linear maps, $\pi \mapsto \cI(x,\pi)$ is convex. 
	\end{proof}
	\begin{lemma} \label{lemma:constant_momentum}
		Consider the context of Proposition \ref{proposition:representation_of_Lagrangian}. Fix $\pi \in \cP(F)$ and denote 
		\begin{align*}
			\cH_\pi(x,p) & = \int V_{x,p}(z) \pi(\dd z), \\
			\cL_\pi(x,v) & = \sup_p  \ip{p}{v} - \int V_{x,p}(z) \pi(\dd z).
		\end{align*}
		
		Let $\Phi(v,\pi)$ be the set functions $w \in L^1(F,\pi)$ such that 
		\begin{equation*}
			\int w(z) \pi(\dd z) = v.
		\end{equation*}

		Suppose that $v \in \text{rel. int. dom } \cL_\pi(x,\cdot)$. Then:
		\begin{enumerate}[(a)]
			\item \label{item:convex_analysis_existence_p*} There is a $p^* \in \bR^d$ such that $v \in \partial_p \cH_\pi(x,p^*)$,
			\item \label{item:convex_analysis_optimal_speeds} There is a $w^* \in \Phi(v,\pi)$ such that $w^*(z) \in \partial_p V_{x,p^*}(z)$ $\pi$ almost surely.
			\item \label{item:equality_of_Lagrangians_in_relint} We have 
			\begin{multline*}
				\sup_{p} \ip{p}{v} - \int V_{x,p}(z) \pi(\dd z) \\
				= \inf_{\substack{w(z) \\ \int w \dd \pi = v}} \sup_{p(z)} \int   \ip{p(z)}{w(z)} - V_{x,p(z)}(z) \pi(\dd z),
			\end{multline*}
			where $\sup_{p(z)}$ is to be interpreted as the supremum over measurable functions $p : F \rightarrow \bR^d$.
		\end{enumerate}

	\end{lemma}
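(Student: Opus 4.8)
The plan is to prove the three statements in order, with (a) and (b) supplying the primal--dual pair that makes the nontrivial inequality in (c) (and the attainment of the infimum there) work. Throughout, convexity \ref{item:assumption:slow_regularity:convexity} of $p \mapsto V_{x,p}(z)$, continuity \ref{item:assumption:slow_regularity:continuity} of $(x,p) \mapsto V_{x,p}$, and compactness of $F$ make $\cH_\pi(x,\cdot)$ a finite, continuous---hence closed---convex function on $\bR^d$, so that $\cL_\pi(x,\cdot)$ is its conjugate and, being the conjugate of a proper convex function with $\cL_\pi(x,v) \geq \ip{0}{v} - \cH_\pi(x,0) = 0$ (using $V_{x,0} \equiv 0$), it is itself a proper closed convex function. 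For part (a) I would then invoke the standard fact (see \cite{Ro70}) that a proper closed convex function has nonempty subdifferential at every point of the relative interior of its domain: pick $p^* \in \partial_p \cL_\pi(x,v)$, and note that by the Fenchel--Young equality together with $\cL_\pi(x,\cdot)^* = \cH_\pi(x,\cdot)$ this is equivalent to $v \in \partial_p \cH_\pi(x,p^*)$.

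For part (b) the task is to upgrade the subgradient of the integral functional to a measurable selection of pointwise subgradients, and I would do this by comparing support functions. The support function of the compact convex set $\partial_p \cH_\pi(x,p^*)$ in a direction $q$ is the one-sided directional derivative $\cH_\pi'(x,p^*;q)$. Since the difference quotients $t^{-1}\bigl(V_{x,p^*+tq}(z) - V_{x,p^*}(z)\bigr)$ decrease as $t \downarrow 0$ to the directional derivative $V_x'(p^*;q)(z)$ of $p \mapsto V_{x,p}(z)$, and are dominated at $t=1$ by a function continuous in $z$, hence bounded on the compact set $F$, monotone convergence yields $\cH_\pi'(x,p^*;q) = \int V_x'(p^*;q)(z)\,\pi(\dd z)$. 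On the other hand $q \mapsto V_x'(p^*;q)(z)$ is exactly the support function of $\partial_p V_{x,p^*}(z)$; the correspondence $z \mapsto \partial_p V_{x,p^*}(z)$ is measurable (its support functions are upper semicontinuous in $z$), nonempty, compact- and convex-valued, and uniformly bounded (the local Lipschitz bound of the convex function $V_{x,\cdot}(z)$ is uniform over the compact $F$), so its Aumann integral $\int \partial_p V_{x,p^*}(z)\,\pi(\dd z)$ is nonempty, convex, compact, and has support function $q \mapsto \int V_x'(p^*;q)(z)\,\pi(\dd z)$. Having the same support function, the Aumann integral and $\partial_p \cH_\pi(x,p^*)$ coincide; hence $v$ lies in the former, which means precisely that there is a measurable (indeed bounded, hence $L^1(F,\pi)$) selection $w^*$ with $w^*(z) \in \partial_p V_{x,p^*}(z)$ $\pi$-almost surely and $\int w^*\,\dd\pi = v$. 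I expect this identification of the subdifferential of the integral functional with the Aumann integral of the pointwise subdifferentials to be the main obstacle, as it bundles together measurability of the set-valued map, uniform integrability of the difference quotients, and the interchange of limit and integral; one could alternatively quote it from the literature on convex integral functionals.

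For part (c) I would first record that, for any fixed $w \in L^1(F,\pi)$, interchanging the pointwise supremum over $p$ with the integral gives $\int \cL_z(x,w(z))\,\pi(\dd z) = \sup_{p(\cdot)} \int \bigl(\ip{p(z)}{w(z)} - V_{x,p(z)}(z)\bigr)\,\pi(\dd z)$, where the supremum is over measurable $p : F \to \bR^d$; the inequality ``$\geq$'' is immediate, and ``$\leq$'' follows by measurably selecting near-maximizers of the integrand, which is continuous in $p$ and measurable in $z$ (with both sides equal to $+\infty$ when $z \mapsto \cL_z(x,w(z))$ fails to be integrable from above). Thus (c) is equivalent to $\cL_\pi(x,v) = \inf_{w \in \Phi(v,\pi)} \int \cL_z(x,w(z))\,\pi(\dd z)$. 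The inequality ``$\leq$'' is weak duality: for every $p \in \bR^d$ and $w \in \Phi(v,\pi)$, $\ip{p}{v} - \cH_\pi(x,p) = \int \bigl(\ip{p}{w(z)} - V_{x,p}(z)\bigr)\,\pi(\dd z) \leq \int \cL_z(x,w(z))\,\pi(\dd z)$, and one takes the supremum over $p$ and the infimum over $w$. For ``$\geq$'' and the attainment, I would use the pair $(p^*,w^*)$ from (a) and (b): the membership $w^*(z) \in \partial_p V_{x,p^*}(z)$ is the Fenchel equality $\cL_z(x,w^*(z)) = \ip{p^*}{w^*(z)} - V_{x,p^*}(z)$ for $\pi$-a.e.\ $z$, which upon integration and use of $\int w^*\,\dd\pi = v$ gives $\int \cL_z(x,w^*(z))\,\pi(\dd z) = \ip{p^*}{v} - \cH_\pi(x,p^*)$, and this equals $\cL_\pi(x,v)$ because $v \in \partial_p \cH_\pi(x,p^*)$. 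This closes the chain of inequalities and simultaneously shows that the infimum is attained at $w^*$, completing the proof.
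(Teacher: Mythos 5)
Your proposal is correct and follows the same overall strategy as the paper: obtain $p^*$ from nonemptiness of $\partial_v \cL_\pi(x,v)$ on the relative interior of the domain and conjugate duality (the paper cites \cite[Theorem 23.4, Theorem 23.5]{Ro70}), then close part (c) exactly as the paper does, via weak duality plus the Fenchel equalities at $(p^*,w^*)$. The only genuine divergence is part (b): the paper simply quotes \cite[Theorem 3]{Pa97} (with $\varepsilon=0$) to produce the integrable selection $w^*$ with $w^*(z)\in\partial_p V_{x,p^*}(z)$ $\pi$-a.s.\ and $\int w^*\,\dd\pi = v$, whereas you re-derive this by identifying $\partial_p\cH_\pi(x,p^*)$ with the Aumann integral of $z\mapsto \partial_p V_{x,p^*}(z)$ through support functions, monotone convergence of difference quotients, and a measurable selection argument; this is sound (the uniform bounds you need do follow from continuity of $(x,p)\mapsto V_{x,p}$ in $C(F)$ and compactness of $F$) and buys a self-contained proof at the cost of carrying the measurability and integrable-boundedness bookkeeping that the citation hides. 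Your extra preliminary step in (c), interchanging the supremum over measurable $p(\cdot)$ with the integral to rewrite the right-hand side as $\inf_{w}\int \cL_z(x,w(z))\,\pi(\dd z)$, is not needed for the lemma as stated but is harmless and is in fact used implicitly later in the paper's proof of the surrounding proposition.
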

	
	For the proof of this lemma, we will use the notion of the relative interior of a convex set. If $A \subseteq \bR^d$ is a convex set, then $\text{rel. int. } A$ is the interior of $A$ inside the smallest affine hyperplane in $\bR^d$ that contains $A$. For a convex functional $\Phi : E \rightarrow (-\infty,\infty]$ the domain of $\Phi$, denoted by $\text{dom } \Phi$, is the set of points $x \in E$ where $\Phi(x) < \infty$.
	
	\begin{proof}[Proof of Lemma \ref{lemma:constant_momentum}]
		The Legendre transform of $\cH_\pi$ is equal to $\cL_\pi$. Since~$v \in \text{rel. int. dom } \cL_\pi(x,\cdot)$, we have by  \cite[Theorem 23.4]{Ro70} or \cite[Theorem E.1.4.2]{HULe01} that $\partial_v \cL_\pi(x,v)$ is non-empty. Let $p^* \in \partial_v \cL_\pi(x,v)$. Then by  \cite[Theorem 23.5]{Ro70} or \cite[Propposition E.1.4.3]{HULe01}, we have $v \in \partial_p H_\pi(x,p^*)$ establishing \ref{item:convex_analysis_existence_p*}.
		
		By Theorem 3 of \cite{Pa97} applied for $\varepsilon = 0$, we find a $\pi$ integrable function $w^*$ such that $\int w^*(z) \pi(\dd z) = v$ and $w(z) \in \partial_p V_{x,p^*}(z)$ $\pi$ almost surely, establishing \ref{item:convex_analysis_optimal_speeds}.
		
		\smallskip
		
		We proceed with the proof of \ref{item:equality_of_Lagrangians_in_relint}. First of all, note that
		\begin{multline*}
			\sup_{p} \ip{p}{v} - \int V_{x,p}(z) \pi(\dd z) \\
			\leq \inf_{\substack{w(z) \\ \int w \dd \pi = v}} \sup_{p(z)} \int \ip{p(z)}{w(z)} - V_{x,p(z)}(z) \pi(\dd z).
		\end{multline*}
		For the other inequality, note that 
		\begin{align*}
			&  \inf_{\substack{w(z) \\ \int w \dd \pi = v}} \sup_{p(z)} \int \ip{p(z)}{w(z)} - V_{x,p(z)}(z) \pi(\dd z) \\
			& \quad \leq \sup_{p(z)} \int \ip{p(z)}{w^*(z)} - V_{x,p(z)}(z) \pi(\dd z) \\
			& \quad = \int \ip{p^*}{w^*(z)} - V_{x,p^*}(z) \pi(\dd z) \\
			& \quad = \ip{p^*}{v} - \int V_{x,p^*}(z) \pi(\dd z) \\
			& \quad \leq \sup_p \ip{p}{v} - \int V_{x,p}(z) \pi(\dd z)
		\end{align*}
		where we used in line 3 that $w^*(z) \in \partial_p V_{x,p^*}(z)$ $\pi$ almost surely and  \cite[Theorem 23.5]{Ro70} or \cite[Propposition E.1.4.3]{HULe01}.
	\end{proof}
	
	\begin{proof}[Proof of Proposition \ref{proposition:representation_of_Lagrangian}]
		First of all, note that
		\begin{align}
			\cL(x,v) & = \sup_p \ip{p}{v} - \cH(x,p)  \notag \\
			& = \sup_p \inf_{\pi \in \cP(F)} \ip{p}{v} - \int V_{x,p}(z) \pi(\dd z) + \cI(x,\pi) \notag \\
			& = \inf_{\pi \in \cP(F)} \sup_p  \ip{p}{v} - \int V_{x,p}(z) \pi(\dd z) + \cI(x,\pi) \label{eqn:sion_lagrangian}
		\end{align}
		by Sion's minimax lemma, using that by Lemma \ref{lemma:DV_is_convex} the map $\pi \mapsto \cI(x,\pi)$ is convex.
		
		Fix $\pi \in \cP(F)$. Denote by
		\begin{align*}
			\cL_\pi(x,v) & = \sup_p  \ip{p}{v} - \int V_{x,p}(z) \pi(\dd z), \\
			\widehat{\cL}_\pi(x,v) & = \inf_{w: \int w(z) \pi(\dd z) = v} \int \cL_z(x,w(z)) \pi(\dd z).
		\end{align*}
		
		Thus, by \eqref{eqn:sion_lagrangian}, our proposition follows if for all $(x,v)$ and $\pi$ we have
		\begin{equation} \label{eqn:equality_Lagrangians_pi}
			\cL_\pi(x,v) = \widehat{\cL}_\pi(x,v).
		\end{equation}
		Fix $(x,v)$ and $\pi$.
		
		\smallskip
		
		\textit{Step 1:} We establish $\cL_\pi(x,v) \leq \widehat{\cL}_\pi(x,v)$. For any integrable function $z \mapsto w(z)$ such that $\int w(z) \pi(\dd z) = v$, we have
		\begin{equation*}
			\cL_\pi(x,v) = \sup_p \int \ip{p}{w(z)} - V_{x,p}(z) \pi(\dd z) 
		\end{equation*}
		implying that
		\begin{equation}
			\cL_\pi(x,v) = \inf_{w: \int w(z) \pi(\dd z) = v} \sup_p \int \ip{p}{w(z)} - V_{x,p}(z) \pi(\dd z) \leq \widehat{\cL}_\pi(x,v) 
		\end{equation}
		by taking the supremum over $p$ inside the integral. We conclude that $\cL_\pi(x,v) \leq \widehat{\cL}_\pi(x,v)$.
		
		\smallskip
		
		\textit{Step 2:} We now establish that if $v \in \text{rel. int. dom } \cL(x,\cdot)$ then $\cL_\pi(x,v) = \widehat{\cL}_\pi(x,v)$. Indeed, by Lemma \ref{lemma:constant_momentum} \ref{item:equality_of_Lagrangians_in_relint}, we have
		\begin{align*}
			\cL_\pi(x,v) & = \sup_{p}   \ip{p}{v} - \int V_{x,p(z)}(z) \pi(\dd z) \\
			& = \inf_{\substack{w(z) \\ \int w \dd \pi = v}} \sup_{p(z)} \int   \ip{p(z)}{w(z)} - V_{x,p(z)}(z) \pi(\dd z) \\
			& = \inf_{\substack{w(z) \\ \int w \dd \pi = v}}  \int   \cL_z(x,w(z)) \pi(\dd z) \\
			& = \widehat{\cL}_\pi(x,v).
		\end{align*}

		\textit{Step 3:} We now establish that $\cL_\pi(x,v) = \widehat{\cL}_\pi(x,v)$. By step 1, we have $\text{rel. int. dom } \widehat{\cL}_\pi(x,\cdot) \subseteq \text{rel. int. dom } \cL_\pi(x,\cdot)$ as the Lagrangians are ordered point-wise. By step 2, we have that if $v \in \text{rel. int. dom } \cL_\pi(x,\cdot)$, then $v \in \text{dom } \cL_\pi(x,\cdot)$. This implies that $\text{rel. int. dom } \widehat{\cL}_\pi(x,\cdot) = \text{rel. int. dom } \cL_\pi(x,\cdot)$ and that $\cL_\pi(x,\cdot) = \widehat{\cL}_\pi(x,\cdot)$ on this set. We conclude that $\cL_\pi(x,v) = \widehat{\cL}_\pi(x,v)$ by \cite[Corollary 7.3.4]{Ro70} (this can also be derived from \cite[Proposition B.1.2.6]{HULe01}). 
	\end{proof}

	\section{Mean-field interacting particles coupled to fast diffusion}
	\label{SF:sec:mean-field-fast-diffusion}

	In this section, we provide a large-deviation result for mean-field interacting jump processes coupled to a fast diffusion process. Concretely, we take the simultaneous limit of infinitely many particles and infinite time-scale separation, and are interested in the large deviations of the empirical density-flux pairs of the mean-field system.
	
	\subsection{The setting of weakly interacting jump processes coupled to a fast process}

	For formulating the large-deviation result (Theorem~\ref{SF:thm:LDP-mean-field}), we first introduce the processes~$X_n(t)$ (Eq.~\eqref{SF:eq:def:density-flux} below) and~$Z_n(t)$ (Eq.~\eqref{SF:eq:intro-mean-field:fast-generator} below) independently from one another, and then consider the coupling. 
	We start with describing the mean-field system.

	\paragraph{The slow process: weakly interacting jump processes.}

	We consider a system of~$n$ jump processes 
	\begin{equation*}
		Y_n(t) := (Y_{n,1}(t),\dots,Y_{n,n}(t)) \in \{1,\dots,q\}^n.
	\end{equation*}
	on a finite state space. We assume that the processes are fully exchangeable, jump one-by-one, and interact weakly: their jump rates depend on their empirical measure
	\begin{equation*}
		\mu_n(t) = \mu_n(Y_n(t)) := n^{-1} \sum_{i=1}^n \delta_{Y_{n,i}(t)}
	\end{equation*}
	
	Our aim is to study the large deviations of the trajectory of empirical measures $\mu_n(t)$ as $n$ gets large. Following 
	\cite{BeChFaGa18,Re18,PaRe19,Kr17}, we will include the one-way fluxes (level 2.5 large deviations) as their inclusion gives greater insight into the problem at hand, and simplifies greatly the Lagrangian.
	
	Denote by 
	\begin{equation*}
		\Gamma := \left\{(a,b) \in \{1,\dots,q\}^2 \, \middle| \, a \neq b\right\}  
	\end{equation*}
	the set of one-way edges in $\{1,2,\dots,q\}$. Denote by $t \mapsto W_{n,i}(t) \in \bN^\Gamma$ the process that counts the number of times the i'th particle jumps over each bond,
	\begin{equation*}
		W_{n,i}(t)(a,b) := \#\left\{0\leq s \leq t \, \middle| \, \left(Y_{n,i}(s-), Y_{n,i}(s)\right) = (a,b) \right\}.
	\end{equation*}
	We regard~$W_{n,i}(t)$ as a vector taking values in~$\mathbb{N}^\Gamma$. The average fluxes over all bonds is captured by the \emph{empirical flux}~$W_{n}$ defined as
	\begin{equation*}
		W_n(t) :=\frac{1}{n}\sum_{i=1}^n W_{n,i}(t).
	\end{equation*}
	To make the connection with the general results of Section \ref{sec:results:general-framework}, we will choose as a slow process the pair empirical density and fluxes: 
	\begin{equation}\label{SF:eq:def:density-flux}
		X_n(t) := \left(\mu_n(Y_n(t)),W_n(t)\right) \in E:= \cP(\{1,\dots,q\}) \times [0,\infty)^\Gamma.
	\end{equation}

	\paragraph{The fast process: drift-diffusion}
	The process $Z_n(t)$ is a drift-diffusion process on the flat torus~$\mathbb{T}^m$. 
	While all arguments that follow still hold true on a closed, smooth, compact manifold~$F$, we do not consider this generalization here to avoid deviating from our main goal.

	\paragraph{The generator of the coupled slow-fast system}

	Above, we described two processes: the density-flux process~$X_n(t)$, and the drift diffusion $Z_n(t)$. We now state their generator, in which we make explicit the coupling between $X_n$ and $Z_n$.
	
	\smallskip
	
	We introduce first the state space. First, we write $\cP_n:=\{(1/n)\sum_{i=1}^n\delta_{q_i}\,:\,q_i\in\{1,\dots,q\}\}$ the subset of $\cP(\{1,\dots,q\}$ of~$n$-atomic measures. We find that $E_n = P_n \times (\frac{1}{n} \bN)^\Gamma \times \bT^d$. 
	
	\smallskip
	
	We start with the generator of the slow process $X_n(t)$. If~$\mu_n(Y_n(t))=:\mu$ and $Z_n(t) = z$, then the transition of a particle from~$a$ to~$b$ occurs at rate~$r_n(a,b,\mu,z)$. The number of particles in state~$a$ is $n\cdot \mu_a$. Hence the rate at which the configuration~$\mu$ transitions to the configuration $\mu + (\delta_b-\delta_a)/n$ is given by~$n\cdot \mu_a\cdot r_n(a,b,\mu,z)$. Simultaneous with this transition, the empirical flux changes from $w$ to $w +\frac{1}{n}\delta_{(a,b)}$. Therefore, the generator~$A_{n,z}^\mathrm{slow}:C_b(E_n)\to C_b(E_n)$ of the jump process~$X_t^n$ is 
	\begin{equation}\label{SF:eq:intro-mean-field:slow-generator}
		A_{n,z}^\mathrm{slow} f(x) = \sum_{a,b;a\neq b} n\cdot \mu_a\cdot r_n(a,b,\mu,z)\left[f(x_{a\to b}^n)-f(x)\right],
	\end{equation}
	where for a state~$x=(\mu,w)\in E_n$, we denote by~$x_{a\to b}^n$ the state after the jump. Since after the jump, exactly one particle has changed its state from~$a$ to~$b$,
	\begin{equation*}
		x_{a\to b}^n = \left(\mu + \frac{1}{n}(\delta_b-\delta_a), w+\frac{1}{n}\delta_{(a,b)}\right).
	\end{equation*}
	
	\smallskip
	
	The generator $A_{n,x}^\mathrm{fast}$ of the fast process $Z_n(t)$ is a second-order uniformly-elliptic differential operator with a domain $\mathcal{D}(A^\mathrm{fast})$ which contains $C^2(F)$  and such that for each choice of local coordinates and $f \in C^2(F)$ we have
	\begin{equation}\label{SF:eq:intro-mean-field:fast-generator}
		A_{n,x}^\mathrm{fast} f(z) = \sum_{i=1}^m b_n^i(z)\partial_i f(z) + \sum_{ij=1}^ma_n^{ij}(z)\partial_i\partial_j f(z).
	\end{equation} 
	$a_n(z) = \sigma_n(z)\sigma_n(z)^T$ are symmetric positive-definite matrices and the~$b_n(z)$ are vector fields. For details on the construction of the process from the operator, we refer to~\cite[Theorem~IV.6.1]{IkWa14} and the discussion thereafter.
	
	\smallskip
	
	To obtain a slow-fast system, we let the diffusion process run on the time-scale of order~$n$. As a consequence, the generator~$A_n$ of the couple~$(X_n(t),Z_n(t))$ is
	\begin{equation}\label{SF:eq:intro-mean-field:generator-slow-fast}
		A_n f(x,z) := A_{n,z}^\mathrm{slow}f(\cdot,z)(x) + n\cdot  A_{n,x}^\mathrm{fast}f(x,\cdot)(z).
	\end{equation}

	The following regularity condition is imposed in order to ensure that we obtain a Feller-continuous process~$(X_n(t),Z_n(t))$ solving the martingale problem~\cite[Theorem~2.1, Section~2.5 and Theorem~2.18]{YiZh09}.
	\begin{condition}[Regularity]\label{SF:condition:mean-field:reg-coefficients}
		For each~$i,j\in 1$,~$n=1,2,\dots$, we have:
		\begin{enumerate}[label=(\arabic*)] 
			\item For each~$x\in E$,~$a_n^{ij}(x,\cdot)\in C^2(F)$ and~$b_n^i(x,\cdot)\in C^1(F)$. 
			\item There is a constant~$C>0$ such that $\ip{a_n(x,z)\xi}{\xi}\geq C|\xi|^2$ for all~$\xi\in T_z F$ and for all~$(x,z)\in E_n\times F$.
			\item For each~$(a,b)\in\Gamma$, the jump rates~$r_n(a,b,\mu,z)$ depend continuously on~$(\mu,z)$, and $r_n(a,b,\mu,\cdot)\in C^1(F)$ for each~$\mu\in P_n$. 
		\end{enumerate}
	\end{condition}

	\subsection{Large deviations for weakly interacting jump processes coupled to a fast diffusion}

	We aim take the limit~$n\to\infty$ and study the large deviations of the trajectory $t \mapsto X_n(t)$ of our density-flux process on the path-space $D_E(\bR^+)$.
	
	\smallskip
	
	To be able to do this, we assume in the following two assumptions that the $n$-dependent rates and coefficients converge in an appropriate way as $n$ goes to infinity and that their limits have appropriate (weak) regularity conditions. We will additionally assume that the limit $r$ of the jump rates $r_n$ on each bond is either $0$ or bounded away from $0$.
	
	Note in particular that in next assumption, we assume that the limit $r$ of the jump rates $r_n$ is only continuous and not Lipschitz continuous as is often used in limits of mean-field interacting jump processes, see \cite{DuRaWu16,PaRe19,Kr17,BuDuFiRa15a,BuDuFiRa15b}.

	\begin{assumption}[Convergence of rates]\label{SF:mean-field:conv-rates}
		There is a kernel~$r=r(a,b,\mu,z)$ such that for each edge~$(a,b)\in\Gamma$,
		\begin{equation*}
			\lim_{n\to\infty}\sup_{\mu\in \mathcal{P}_n}\sup_{z\in F}\left| r_n(a,b,\mu,z)-r(a,b,\mu,z)\right| = 0.
		\end{equation*}
		There are constants~$0<r_\mathrm{min}\leq r_{\mathrm{max}}<\infty$ such that for all edges~$(a,b)\in\Gamma$ satisfying~$\sup_{\mu,z}r(a,b,\mu,z)>0$, we have
		\begin{equation*}
			r_\mathrm{min}\leq \inf_{\mu,z}r(a,b,\mu,z) \leq \sup_{\mu,z}r(a,b,\mu,z) \leq r_\mathrm{max}.
		\end{equation*}
	\end{assumption}

	\begin{assumption}[Convergence of coefficients]\label{SF:mean-field:conv-coeff}
		For each~$i,j$, there are functions~$b^i$ and~$\sigma^{ij}$ on~$E\times F$ such that whenever~$x_n=(\mu_n,w_n)\to (\mu,w)$, then
		\begin{equation*}
			\|b^i(\mu,\cdot)-b_n^i(x_n,\cdot)\|_{F}\to 0\quad\text{and}\quad \|a^{ij}(\mu,\cdot)-a_n^{ij}(x_n,\cdot)\|_{F}\to 0,
		\end{equation*}
		where~$\|g\|_F=\sup_F|g|$. The maps~$\mu\mapsto a^{ij}(\mu,\cdot)$ and $\mu \mapsto b^i(\mu,\cdot)$ are continuous as functions from~$\mathcal{P}(\{1,\dots,q\})$ to~$C(F)$ equipped with the uniform norm.
	\end{assumption}
	
	Before stating the large deviation result, we introduce the limiting objects that appear in the variational expression for the rate function.
	\begin{itemize}
		\item The slow Hamiltonian; for~$(x,p)\in E\times \mathbb{R}^d$,
		\begin{equation*}
			V_{x,p}(z) = \sum_{ab} \mu_a  r(a,b,\mu,z)\left[e^{p_b-p_a + p_{ab}}-1\right].
		\end{equation*}
		\item The Donsker-Varadhan functional; for~$x\in E$,
		\begin{equation*}
			\mathcal{I}(\mu,\pi) = -\inf_{\substack{u\in C^2(F) \\ \inf u>0}} \int_F \frac{A_\mu^\mathrm{fast}u}{u}\,\dd\pi,
		\end{equation*}
		where~$A_\mu^\mathrm{fast}u(z) := \sum_i b^n(\mu,z)\partial_i u(z) + \sum_{ij} a^{ij}(\mu,z)\partial_i u(z)\partial_j u(z)$. 
	\end{itemize}
	Finally, denote by $S(\alpha | \beta)$ the map
	\begin{equation*}
		S(\alpha \, | \, \beta) := \begin{cases}
			\beta & \text{if } \alpha = 0, \\
			\alpha \log \left(\alpha/\beta\right) - (\alpha-\beta) & \text{if } \alpha \neq 0, \beta \neq 0, \\
			+\infty & \text{if } \alpha \neq 0, \beta = 0.
		\end{cases}
	\end{equation*}
	
	Theorems \ref{theorem:LDP_general} and \ref{theorem:LDP_general_Lagrangian_two_represenatations} applied to present context, yields the following large deviation result.

	\begin{theorem}[Large deviations of the density-flux process] \label{SF:thm:LDP-mean-field}
		Let~$(X_n,Z_n)$ be the Markov process with generator~\eqref{SF:eq:intro-mean-field:generator-slow-fast}.
		Suppose that Assumptions~\ref{SF:mean-field:conv-rates} and~\ref{SF:mean-field:conv-coeff} hold true and that $X_n(0)$ satisfies a large-deviation principle with good rate function $J_0:E\to[0,\infty]$ on $E = \cP(\{1,\dots,q\}) \times [0,\infty)^\Gamma$.
		\smallskip
		
		Then $\{X_n\}_{n = 1,2\dots}$ satisfies a large-deviation principle on $D_{E}(\bR^+)$ with good rate function $J$:
		\begin{equation*}
			J(\gamma) = \begin{cases}
				J_0(\gamma(0)) + \int_0^\infty \cL(\gamma(s),\dot{\gamma}(s)) \dd s & \text{if } \gamma \in \cA\cC, \\
				\infty & \text{otherwise}.
			\end{cases}
		\end{equation*}
		$\cL$ has two representations:
		\begin{description}
			\item[Dual of the principal eigenvalue] The Lagrangian $\cL$ is given by
			\begin{equation*}
				\cL(x,v) =  \sup_p \left\{ \ip{p}{v} - \cH(x,p) \right\}, 
			\end{equation*}
			where
			\begin{equation*}
				\sup_\pi \left\{\int V_{x,p}(z) \pi(\dd z) - \cI(\mu,\pi) \right\}.
			\end{equation*}
			\item[Optimizing over velocities] For a path~$\gamma:[0,\infty)\to E$,~$\gamma=(\mu,w)$, the Lagrangian~$\mathcal{L}$ is finite only if $\partial_t\mu_a = \sum_b \partial_t (w_{ba}-w_{ab})$. If this is the case
			\begin{multline*}
				\cL\left(\gamma,\partial_t\gamma\right) \\
				= \inf_{\pi \in \cP(F)} \inf_{u \in \Phi(\partial_t w,\pi)}  \bigg\{ \sum_{(a,b) \in \Gamma} \int_F S(u_{ab}(z)\,|\, \mu_a  r(a,b,\mu,z)) \pi(\dd z) 
				\\+ \cI(\mu,\pi) \bigg\},
			\end{multline*}
			where $\Phi(\partial_t w,\pi)$ is the set of measurable functions $u_{ab}(z)$ for $z \in F$ and $(a,b) \in \Gamma$ such that~$\int u_{ab}(z) \pi(\dd z) = \partial_t w_{ab}$.
		\end{description}
		
	\end{theorem}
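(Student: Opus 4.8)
The plan is to obtain Theorem~\ref{SF:thm:LDP-mean-field} as a direct specialization of Theorems~\ref{theorem:LDP_general} and~\ref{theorem:LDP_general_Lagrangian_two_represenatations}: all the work consists in verifying, for the generator~\eqref{SF:eq:intro-mean-field:generator-slow-fast}, Conditions~\ref{condition:compact_setting:state-spaces} and~\ref{condition:compact_setting:well-posedness-martingale-problem} together with Assumptions~\ref{assumption:compact_setting:convergence-of-nonlinear-generators}, \ref{assumption:compact_setting:convergence-of-nonlinear-generators-external}, \ref{assumption:compact_setting:eigen_value}, \ref{assumption:results:regularity_of_V}, \ref{assumption:results:regularity_I} and~\ref{assumption:Hamiltonian_vector_field} (the large deviations of $X_n(0)$ being assumed). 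First I would set up the state spaces: $F=\bT^m$ is compact, and $E_n=\mathcal{P}_n\times(\tfrac1n\bN)^\Gamma$ embeds into $E=\mathcal{P}(\{1,\dots,q\})\times[0,\infty)^\Gamma\subseteq\bR^q\times\bR^\Gamma$ via the obvious inclusion, a continuous asymptotically dense embedding; since the measure factor is already compact, the only noncompact directions are the monotone flux coordinates, and the compact-containment clauses of Condition~\ref{condition:compact_setting:state-spaces} are elementary. Well-posedness of the $(A_n,\mu)$-martingale problem and the Feller continuity in Condition~\ref{condition:compact_setting:well-posedness-martingale-problem} follow from~\cite{YiZh09,IkWa14} using the regularity in Condition~\ref{SF:condition:mean-field:reg-coefficients}.

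Next I would compute the non-linear generators. For the slow part, applying $\tfrac1n e^{-nf}A^\mathrm{slow}_{n,z}e^{nf}$ to $f\in C_{cc}^\infty(E)$ with $p=\nabla f(x)$, $x=(\mu,w)$, and using the jump map $x\mapsto x^n_{a\to b}$ together with Assumption~\ref{SF:mean-field:conv-rates}, gives in the limit $V_{x,p}(z)=\sum_{ab}\mu_a r(a,b,\mu,z)[e^{p_b-p_a+p_{ab}}-1]$, and the uniform bound required in Assumption~\ref{assumption:compact_setting:convergence-of-nonlinear-generators} holds because the rates are uniformly bounded and test functions in $C^\infty_{cc}(E)$ have bounded gradient. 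For the fast part, $e^{-\phi}A^\mathrm{fast}_{n,x}e^{\phi}=A^\mathrm{fast}_{n,x}\phi+\langle a_n(x,\cdot)\nabla\phi,\nabla\phi\rangle$ for $\phi\in C^2(F)$, which converges uniformly on $F$ to $A^\mathrm{fast}_\mu\phi+\langle a(\mu,\cdot)\nabla\phi,\nabla\phi\rangle$ by Assumption~\ref{SF:mean-field:conv-coeff}, yielding Assumption~\ref{assumption:compact_setting:convergence-of-nonlinear-generators-external}. The principal-eigenvalue Assumption~\ref{assumption:compact_setting:eigen_value} follows from the Krein--Rutman theorem applied to the uniformly elliptic operator $A^\mathrm{fast}_\mu$ perturbed by the bounded continuous potential $V_{x,p}\in C(F)$: it produces a strictly positive $C^2$ eigenfunction $g$ with $(V_{x,p}+A^\mathrm{fast}_\mu)g=\cH(x,p)g$, verifying~\ref{item:assumption:PI:solvePI}; clause~\ref{item:assumption:PI:domain} holds because $\mathcal{D}(A^\mathrm{fast})\supseteq C^2(F)$ is stable under $g\mapsto g^{1-\varepsilon}$ for strictly positive $g$.

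The analytic core is Assumption~\ref{assumption:results:regularity_of_V}. Continuity of $(x,p)\mapsto V_{x,p}\in C(F)$, convexity of $p\mapsto V_{x,p}(z)$ and $V_{x,0}(z)=0$ are immediate from the explicit formula. As containment function I would take $\Upsilon(\mu,w):=\sum_{(a,b)\in\Gamma}\log(1+w_{ab})$: its sublevel sets are compact in $E$, and since $\nabla_\mu\Upsilon=0$ and $\partial_{w_{ab}}\Upsilon=(1+w_{ab})^{-1}\in(0,1]$ one gets $V_{x,\nabla\Upsilon(x)}(z)\le\sum_{ab}\mu_a r_{\mathrm{max}}(e-1)=:c_\Upsilon$. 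Controlled growth~\ref{item:assumption:slow_regularity:controlled_growth} follows because on each active bond the limiting rate lies in $[r_{\mathrm{min}},r_{\mathrm{max}}]$ and is identically $0$ on the remaining bonds, so $V_{x,p}(z_1)$ and $V_{x,p}(z_2)$ are comparable up to constants uniform over compacts. The clause~\ref{item:assumption:slow_regularity:continuity_estimate}, the continuity estimate for $\Lambda(x,p,\pi)=\int V_{x,p}(z)\,\pi(\mathrm{d}z)$, is the step I expect to be the main obstacle: because $V$ depends \emph{exponentially} on the flux momenta and the limiting rates are only continuous in $(\mu,z)$, the usual quadratic penalization fails to control the doubled momenta. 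The plan is to use a penalization $\Psi((\mu,w),(\mu',w'))=|\mu-\mu'|^2+\sum_{ab}\bigl(\sqrt{1+w_{ab}}-\sqrt{1+w_{ab}'}\bigr)^2$, as in the flux large-deviation literature~\cite{Re18,PaRe19,Kr17}, so that the penalized momenta $\alpha\nabla\Psi$ remain bounded on the relevant edges; one then estimates $\Lambda(x_{\varepsilon,\alpha},\cdot)-\Lambda(y_{\varepsilon,\alpha},\cdot)$ by a uniform-continuity argument for $r$ on the compact set $K_\varepsilon\times F$, absorbing the error as $\alpha\to\infty$ using~\eqref{eqn:control_on_Gbasic_sup}--\eqref{eqn:control_on_Gbasic_inf}; alternatively one invokes the variant continuity estimate of Appendix~\ref{section:continuity_estimate_general}.

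Finally, Assumption~\ref{assumption:results:regularity_I} on $\cI(\mu,\pi)$: lower semicontinuity follows from continuity of $\mu\mapsto A^\mathrm{fast}_\mu\phi$ into $C(F)$; the zero measure~\ref{item:assumption:I:zero-measure} is the unique invariant probability measure of the nondegenerate diffusion $A^\mathrm{fast}_\mu$; compactness of the sublevel sets~\ref{item:assumption:I:compact-sublevelsets} holds since $F$ is compact; and~\ref{item:assumption:I:finiteness}--\ref{item:assumption:I:equi-cont} follow from uniform ellipticity and the $C^2$/$C^1$ regularity of the coefficients, which make the Donsker--Varadhan functional comparable for nearby~$\mu$. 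Assumption~\ref{assumption:Hamiltonian_vector_field} holds since $E$ is the product of a simplex and an orthant, hence closed and convex, and a direct computation of $\partial_p V_{x,p_0}(z)$ shows that its $\mu$-component is a nonnegative combination of the vectors $\delta_b-\delta_a$ whose outgoing contribution at a vanishing coordinate $\mu_a=0$ is annihilated by the prefactor $\mu_a$, so it lies in $T_{\mathcal{P}(\{1,\dots,q\})}(\mu)$, while its flux component is componentwise nonnegative, hence in $T_{[0,\infty)^\Gamma}(w)$. With all hypotheses checked, Theorem~\ref{theorem:LDP_general} gives the large deviation principle and Theorem~\ref{theorem:LDP_general_Lagrangian_two_represenatations}(i) the principal-eigenvalue representation; for (ii) I would compute $\cL_z(x,v)=\sup_p\langle p,v\rangle-V_{x,p}(z)$ explicitly, which is finite only when the discrete continuity equation $\dot\mu_a=\sum_b(\dot w_{ba}-\dot w_{ab})$ holds and then equals $\sum_{(a,b)\in\Gamma}S(\dot w_{ab}\,|\,\mu_a r(a,b,\mu,z))$, and substitute this into the double-optimization formula of Theorem~\ref{theorem:LDP_general_Lagrangian_two_represenatations}(ii) to obtain the stated second representation.
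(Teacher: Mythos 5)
Your proposal follows the paper's own route: both prove the theorem by specializing Theorems~\ref{theorem:LDP_general} and~\ref{theorem:LDP_general_Lagrangian_two_represenatations}, verifying the state-space and martingale-problem conditions via~\cite{YiZh09}, computing $V_{x,p}(z)$ by the same Taylor-expansion argument, obtaining the principal-eigenvalue assumption from uniform ellipticity, and finishing with the explicit Legendre transform of $V_{x,\cdot}(z)$ giving the entropy function $S$ under the discrete continuity equation. The only substantive difference is that the paper disposes of Assumptions~\ref{assumption:results:regularity_of_V}, \ref{assumption:results:regularity_I} and~\ref{assumption:Hamiltonian_vector_field} by citing Propositions~5.13, 5.10 and 5.19 of~\cite{KrSc19}, whereas you sketch direct verifications; your treatment of the continuity estimate~\ref{item:assumption:slow_regularity:continuity_estimate} and of~\ref{item:assumption:I:finiteness}--\ref{item:assumption:I:equi-cont} remains a plan rather than a proof, though it correctly identifies the required tools (the square-root/two-penalization scheme of Appendix~\ref{section:continuity_estimate_general} and ellipticity-based comparability of the Donsker--Varadhan functional), which is exactly what the companion paper supplies.
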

	
	We give the proof of this result in Section \ref{SF:sec:proof-mean-field} below. Before doing so, we derive from the large deviation principle the averaging principle.

	\subsection{Averaging Principle}
	\label{SF:sec:averaging-principles}
	
	Related to the question of the large deviation principle, there is the question of a limiting result. It is well known that if the large deviation principle has a unique minimizer, then the dynamics has a limit. In the present context, the result one obtains is often called \textit{the averaging principle}.
	
	In the context that the rates $r(a,b,\mu,z)$ are Lipschitz as a function of $\mu$ was proven by Budhiraja, Dupuis, Fischer and Ramanan~\cite[Theorem~2.2]{BuDuFiRa15a} based on the classical result by Kurtz~\cite{Ku70b}. The limiting dynamics of the process of empirical measures $t \mapsto \mu_n(t)$ as $n \rightarrow \infty$ then satisfy the equation
	\begin{equation} \label{eqn:LLN_solution}
		\partial_t\mu_a = \int \sum_{b \neq a} \left[ \mu_b r(b,a,\mu,z) - \mu_a r(a,b,\mu,z) \right] \pi_\mu(\dd z) ,\quad \mu(0)=\mu_0,
	\end{equation}
	where $\pi_\mu$ is the stationary measure of the Markovian dynamics corresponding to $A_\mu^\mathrm{fast}$.
	
	\smallskip
	
	The result of Theorem \ref{SF:thm:LDP-mean-field} was obtained without a Lipschitz assumption on the rates. We can therefore, given the uniqueness of minimizers of the rate function, obtain the same limiting result.

	\begin{proposition}[Law of Large Number limit of mean-field interacting particles]
		Let~$X_n=(\mu_n,w_n)$ be the density-flux process from~$\eqref{SF:eq:def:density-flux}$ in the context of Theorem \ref{SF:thm:LDP-mean-field}. 
		Any minimizer of the rate function $J$ is a solution to \eqref{eqn:LLN_solution}. 
		If the rate function has a unique minimizer, the trajectory $t \mapsto \mu_n(t)$ of empirical measures converges uniformly on compact time intervals to this solution of \eqref{eqn:LLN_solution}
	\end{proposition}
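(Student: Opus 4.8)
The plan is to reduce the statement to an identification of the zero set of the rate function $J$, and then invoke the standard concentration consequence of a large-deviation principle with a good rate function. First I would record that $J\ge 0$ with $\inf_\gamma J(\gamma)=0$: nonnegativity is immediate from the double-optimization representation in Theorem~\ref{SF:thm:LDP-mean-field} (both $S(\cdot\,|\,\cdot)$ and $\mathcal I$ are nonnegative), while $\inf J=0$ follows by applying the large-deviation upper bound to the closed set $D_E(\mathbb R^+)$ itself, which has probability one. Since $J$ is a good rate function its sublevel sets are compact, so a minimizing sequence has a limit point at which $J$ vanishes by lower semicontinuity; hence minimizers exist, and a path $\gamma$ minimizes $J$ if and only if $J(\gamma)=0$, i.e.\ $\gamma\in\mathcal A\mathcal C$, $J_0(\gamma(0))=0$, and $\mathcal L(\gamma(s),\dot\gamma(s))=0$ for a.e.\ $s$.

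The heart of the argument will be to show that, writing $x=(\mu,w)$ and $v=(\dot\mu,\dot w)$, we have $\mathcal L(x,v)=0$ if and only if $v=v^\star(x)$, where $\dot w^\star_{ab}=\int_F \mu_a\,r(a,b,\mu,z)\,\pi_\mu^0(\mathrm dz)$, $\dot\mu^\star_a=\sum_b(\dot w^\star_{ba}-\dot w^\star_{ab})$, and $\pi_\mu^0$ is the unique invariant measure of $A_\mu^\mathrm{fast}$ (uniqueness being part of the verification of Assumption~\ref{item:assumption:I:zero-measure} for this model, which rests on the uniform ellipticity in Condition~\ref{SF:condition:mean-field:reg-coefficients}). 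For the ``if'' direction I would insert $\pi=\pi_\mu^0$ and $u_{ab}(z)=\mu_a r(a,b,\mu,z)$ into the double-optimization formula of Theorem~\ref{SF:thm:LDP-mean-field}: this pair lies in $\Phi(\dot w^\star,\pi_\mu^0)$, makes every $S$-integral vanish (since $S(\alpha\,|\,\alpha)=0$) and makes $\mathcal I(\mu,\pi_\mu^0)=0$, so $\mathcal L(x,v^\star)=0$. For the ``only if'' direction I would take a minimizing sequence $(\pi_k,u_k)$, $u_k\in\Phi(\dot w,\pi_k)$, with total cost tending to $0$; compactness of the sublevel sets of $\mathcal I(\mu,\cdot)$ (Assumption~\ref{item:assumption:I:compact-sublevelsets}) and lower semicontinuity force $\pi_k\to\pi_\mu^0$, and then Jensen's inequality for the jointly convex, lower semicontinuous function $S$, together with $\int u_{k,ab}\,\mathrm d\pi_k=\dot w_{ab}$, the weak convergence $\pi_k\to\pi_\mu^0$, and the fact that by Assumption~\ref{SF:mean-field:conv-rates} each rate is either identically zero or bounded away from zero, yields $S\bigl(\dot w_{ab}\,\big|\,\int\mu_a r(a,b,\mu,\cdot)\,\mathrm d\pi_\mu^0\bigr)=0$, i.e.\ $\dot w=\dot w^\star$; the continuity constraint of Theorem~\ref{SF:thm:LDP-mean-field} then gives $\dot\mu=\dot\mu^\star$. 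A direct computation in coordinates shows the density-component of $v^\star(x)$ equals the right-hand side of~\eqref{eqn:LLN_solution}.

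Granting this, any minimizer $\gamma=(\mu,w)$ of $J$ is absolutely continuous with $\dot\gamma(s)=v^\star(\gamma(s))$ for a.e.\ $s$, so its density-component solves~\eqref{eqn:LLN_solution} with $\mu(0)$ the density-component of $\gamma(0)$, which minimizes $J_0$; this is the first assertion. (One may add that $v^\star$ is continuous because $\mu\mapsto\pi_\mu^0$ is, so Peano's theorem gives a solution, global on $[0,\infty)$ since the simplex is compact and the fluxes grow at most linearly by Assumption~\ref{SF:mean-field:conv-rates}.) For the second assertion, assume $J$ has a unique minimizer $\gamma^\star$; it is then the unique zero of $J$ and, being absolutely continuous, a continuous path. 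For any open $U\ni\gamma^\star$ in $D_E(\mathbb R^+)$ the closed set $U^c$ avoids $\{J=0\}$, and goodness of $J$ forces $\inf_{U^c}J>0$ (otherwise a minimizing sequence in $U^c$ would lie in a compact sublevel set and have a limit point in $U^c$ where $J$ vanishes). The large-deviation upper bound then gives $\limsup_n\tfrac1n\log\mathbb P[X_n\notin U]\le-\inf_{U^c}J<0$, so $X_n\to\gamma^\star$ in probability on $D_E(\mathbb R^+)$; since $\gamma^\star$ is continuous, Skorokhod convergence to it is uniform convergence on compact time intervals, and composing with the continuous projection $(\mu,w)\mapsto\mu$ yields the stated convergence of $\mu_n$ (and, the speed being $n$, the probabilities are summable, so Borel--Cantelli upgrades this to almost-sure convergence).

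The step I expect to be the main obstacle is the ``only if'' half of the zero-set identification: one must know that $\mathcal I(\mu,\cdot)$ vanishes only at $\pi_\mu^0$ (uniqueness of the fast invariant measure) and that vanishing total cost pins the flux to its averaged value; once that is in place, the rest is soft analysis together with the textbook passage from a large-deviation principle with unique minimizer to a law of large numbers.
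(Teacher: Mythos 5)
Your proposal is correct and follows essentially the same route as the paper: identify the zero set of the Lagrangian by using nonnegativity of both the $S$-integral and $\mathcal I$, pin down $\pi$ at the unique zero $\pi_\mu^0$ of $\mathcal I(\mu,\cdot)$, force $u_{ab}(z)=\mu_a r(a,b,\mu,z)$, and combine with the flux constraint to recover \eqref{eqn:LLN_solution}, then pass from the LDP to the LLN. The only (welcome) differences are cosmetic: you replace the paper's appeal to ``any optimizer $u_{ab}$'' by a minimizing-sequence argument via Jensen's inequality, which is actually tighter since the paper does not establish that the infimum over $u$ is attained, and you derive the concentration statement directly from the LDP upper bound and goodness of $J$ rather than citing \cite[Theorem~A.2]{PeSc19}.
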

	
	Clearly, \ref{eqn:LLN_solution} has a unique solution of $r$ is Lipschitz as a function of $\mu$. This would indeed imply that the rate function $J$ has a unique minimizer if $J_0$ has a unique minimizer, recovering the context of \cite{BuDuFiRa15a}.

	\begin{proof}
		As a consequence of the path-wise large-deviation principle of Theorem~\ref{SF:thm:LDP-mean-field}, any converging subsequences of the processes~$X^n$ converges a.s. to a minimizer~$x$ of the rate function~\cite[Theorem~A.2]{PeSc19}. We show that the density~$\mu$ of a minimizer~$x=(\mu,w)$ of the rate function~$J$ solves~\eqref{eqn:LLN_solution}. If~$J(\mu,w)=0$, then~$\mathcal{L}(x(t),\dot{x}(t))=0$ for a.e.~$t>0$, where the Lagrangian~$\mathcal{L}$ is given by
		\begin{multline*}
			\cL\left(x(t),\dot{x}(t)\right) = \\
			\inf_{\pi \in \cP(F)} \inf_{u \in \Phi(\dot{w},\pi)}  \left\{ \sum_{(a,b) \in \Gamma} \int S(u_{ab}(z)\,|\, \mu_a r(a,b,\mu(t),z)) \pi(\dd z) + \cI(\mu(t),\pi) \right\},
		\end{multline*}
		and by finiteness of the Lagrangian,
		\begin{equation}\label{SF:eq:proof:averaging-principle:mean-field:rho}
			\dot{\mu}_a(t) = \sum_{b \neq a} (\dot{w}_{ba}-\dot{w}_{ab}).
		\end{equation}
		As $\cI$ has compact level-sets and is strictly convex, the there exists a map $t \mapsto \pi_{\mu(t)}$ such that for all $t$ the measure $\pi_{\mu(t)}$ is the unique one such that $\cI(\mu(t),\pi_{\mu(t)}) = 0$.
		
		As all the terms in the Lagrangian are non-negative, it follows that for almost all $t$:
		\begin{multline*}
			0 = \cL\left(x(t),\dot{x}(t)\right) = \\
			\inf_{u \in \Phi(\dot{w},\pi)}  \left\{ \sum_{(a,b) \in \Gamma} \int_F S(u_{ab}(z)\,|\, \mu_a(t) r(a,b,\mu(t),z)) \pi_{\mu(t)}(\dd z) \right\}.
		\end{multline*}
		Since $S(\alpha \, | \, \beta) = 0$ if and only if $\alpha = \beta$ any optimizer~$u_{ab}(\cdot)$ satisfies
		\begin{equation}\label{SF:eq:proof:averaging-principle:mean-field:uab}
			u_{ab}(z)=\mu_a(t) r(a,b,\mu(t),z)\qquad \pi_\rho\,\mathrm{a}.\mathrm{e},
		\end{equation}
		and by definition of the set~$\Phi(\dot{w},\pi_\mu)$,
		\begin{equation}\label{SF:eq:proof:averaging-principle:mean-field:wab}
			\dot{w}_{ab}(t)=\int_F u_{ab}(z)\,\pi_{\mu(t)}(\dd z).
		\end{equation}
		Combining these equality's, we find for almost every $t$ that
		\begin{align*}
			\dot{\mu}_a(t) 
			&\overset{\eqref{SF:eq:proof:averaging-principle:mean-field:rho}}{=} \sum_{b\neq a} (\dot{w}_{ba}-\dot{w}_{ab})\\
			&\overset{\eqref{SF:eq:proof:averaging-principle:mean-field:wab}}{=} \sum_{b\neq a} \int (u_{ba}-u_{ab})\,\pi_{\mu(t)}(\dd z)\\
			&\overset{\eqref{SF:eq:proof:averaging-principle:mean-field:uab}}{=} \sum_{b\neq a} \left[\int \mu_b r(b,a,\mu(t),z)\pi_{\mu(t)}(\dd z) - \int \mu_a r(a,b,\mu(t),z)\pi_{\mu(t)}(\dd z)\right],
		\end{align*}
		which finishes the proof.
	\end{proof}


	\subsection{Proof of Theorem \ref{SF:thm:LDP-mean-field}}
	\label{SF:sec:proof-mean-field}
	In this section, we prove Theorem~\ref{SF:thm:LDP-mean-field} by verifying the assumptions of our general results, the large-deviation theorem and the action-integral representation. Hence we verify Assumptions~\ref{assumption:compact_setting:convergence-of-nonlinear-generators},~\ref{assumption:compact_setting:convergence-of-nonlinear-generators-external},~\ref{assumption:compact_setting:eigen_value},~\ref{assumption:results:regularity_of_V},~\ref{assumption:results:regularity_I} and~\ref{assumption:Hamiltonian_vector_field}.
	\smallskip
	
	To that end, recall the setting: the slow-fast process~$(X_n,Z_n)$ takes values in~$E_n\times F$, where we embed~$E_n$ into into~$E=\mathcal{P}(\{1,\dots,q\})\times[0,\infty)^\Gamma$ by using the identity map $\eta_n$. The set~$F$ is a finite-dimensional torus~$\mathbb{T}^,$. 
	
	The generator of the slow-fast system was given in \eqref{SF:eq:intro-mean-field:generator-slow-fast} as
	\begin{equation*}
		A_n f(x,z) := A_{n,z}^\mathrm{slow}f(\cdot,z)(x) + n\cdot  A_{n,x}^\mathrm{fast}f(x,\cdot)(z),
	\end{equation*}
	with slow and fast generators given by
	\begin{align*}
		A_{n,z}^\mathrm{slow} g(x) &:= \sum_{ab,a\neq b} n\cdot \mu_a\cdot r_n(a,b,\mu,z)\left[g(x_{a\to b}^n)-g(x)\right],\\
		A_{n,x}^\mathrm{fast} h(z) &:= \sum_i b_n^i(x,z)\partial_i h(z) + \sum_{ij}a_n^{ij}(x,z)\partial_i\partial_j h(z).
	\end{align*}
	\begin{proof}[Verification of Assumption~\ref{assumption:compact_setting:convergence-of-nonlinear-generators}]
		We have to find the slow Hamiltonian~$V_{x,p}(z)$ such that
		\begin{equation*}
			\frac{1}{n}e^{-nf}A_{n,z}^\mathrm{slow}e^{nf}\xrightarrow{n\to\infty} V_{x,\nabla f(x)}(z)
		\end{equation*}
		as specified in Assumption~\ref{assumption:compact_setting:convergence-of-nonlinear-generators}. We have
		\begin{align*}
			\frac{1}{n}e^{-nf(x)}A_{n,z}^\mathrm{slow}e^{nf(x)}&= \sum_{ab,a\neq b}\mu_a r_n(a,b,\mu,z) \left[\exp\{n(f(x_{a\to b}^n)-f(x))\}-1\right].
		\end{align*}
		Suppose that~$x_n=(\mu_n,w_n)\to x$. Then by Taylor expansion,
		\begin{equation*}
			n(f(x_{a\to b}^n)-f(x_n)) \xrightarrow{n\to\infty} \ip{\nabla f(x)}{e_{b}-e_a+e_{ab}},
		\end{equation*}
		for all~$f\in C^2(E)$, uniformly on compacts~$K\subseteq E$. By the convergence assumption on~$r_n$, we obtain the claimed convergence with~$D_0=C^2(E)$ and
		\begin{equation*}
			V_{x,p}(z) = \sum_{ab,a\neq b}\mu_a r(a,b,\mu,z)\left[e^{p_b-p_a+p_{ab}}-1\right].
		\end{equation*}
	\end{proof}
	\begin{proof}[Verification of Assumption~\ref{assumption:compact_setting:convergence-of-nonlinear-generators-external}]
		Let~$x_n\to x$ in~$E$ and~$h_n\to h$ in~$C(F)$. By the convergence assumptions on the coefficients~$b_n^i$ and~$a_n^{ij}$, we obtain
		\begin{equation*}
			A_{n,x_n}^\mathrm{fast}h_n(z) \to \sum_i b^i(x,z) \partial_ih(z) + \sum_{ij} a^{ij}(x,z)\partial_i\partial_j h(z) =: A_x^\mathrm{fast}h(z)
		\end{equation*}
		uniformly over~$z \in F$.
	\end{proof}
	\begin{proof}[Verification of Assumption~\ref{assumption:compact_setting:eigen_value}]
		The first part follows since~$F$ is compact. Let~$x\in E$ and~$p\in\mathbb{R}^d$. We aim to find a strictly positive eigenfunction~$u:F\to(0,\infty)$ and an eigenvalue~$\lambda \in \mathbb{R}$ such that
		\begin{equation}\label{SF:eq:proof-mean-field-eigenvalue-problem}
			(V_{x,p} + A_x^\mathrm{fast})u=\lambda u.
		\end{equation}
		Equation~\eqref{SF:eq:proof-mean-field-eigenvalue-problem} is a principal-eigenvalue problem for an uniformly elliptic operator. Uniform ellipticity follows by Condition~\ref{SF:condition:mean-field:reg-coefficients} on the diffusion coefficients and the uniform convergence in Assumption~\ref{SF:mean-field:conv-coeff}. Hence there exists a unique eigenfunction~$u \in C^2(F)$ with a real eigenvalue~$\lambda$ (e.g. Sweers~\cite{Sw92}). By~\cite{DoVa75a}, this principal eigenvalue satisfies the variational representation
		\begin{equation*}
			\lambda = \sup_{\pi\in\mathcal{P}(F)}\left[\int_F V_{x,p}(z)\,\pi(\dd z) - \mathcal{I}(x,\pi)\right],
		\end{equation*} 
		with the functional
		\begin{equation*}
			\mathcal{I}(x,\pi)=-\inf_{\phi: \, \inf \phi>0}\int_F\frac{A_x^\mathrm{fast}\phi(z)}{\phi(z)}\, \pi(\dd z).
		\end{equation*}
		Hence Assumption~\ref{assumption:compact_setting:eigen_value} holds with the Hamiltonian $\cH$ as claimed.
	\end{proof}

	Assumptions \ref{assumption:results:regularity_of_V}, \ref{assumption:results:regularity_I} and \ref{assumption:Hamiltonian_vector_field} are needed to obtain existence and uniqueness of the Hamilton-Jacobi-Bellman equation. They are therefore also treated in \cite{KrSc19}; we refer to the second example of Section 5 therein.
	
	\begin{proof}[Verification of Assumption~\ref{assumption:results:regularity_of_V}]
		This assumption is proven in~\cite[Proposition~5.13]{KrSc19}. 
	\end{proof}
	
	\begin{proof}[Verification of Assumption~\ref{assumption:results:regularity_I}]
		This assumption is proven in~\cite[Proposition~5.10]{KrSc19}.
	\end{proof}
	
	\begin{proof}[Verification of Assumption~\ref{assumption:Hamiltonian_vector_field}]
		This can be carried out in a similar way as in Proposition  5.19 of \cite{KrSc19}.
	\end{proof}
	
	\subsection*{Acknowledgment}
	MS acknowledges financial support through NWO grant 613.001.552.
	
	\appendix

	\section{A more general continuity estimate} \label{section:continuity_estimate_general}
	
	The following appendix is a verbatim copy from \cite{KrSc19} that we include for completeness, as it is relevant for our main example. It is one the extension of the notion of the continuity estimate for two penalization functions instead of one.

	\begin{definition}
		We say that $\{\Psi_1,\Psi_2\}$, $\Psi_i : E^2 \rightarrow \bR^+$ is a \textit{pair of penalization functions} if $\Psi_i \in C^1(E^2)$ and if $x = y$ if and only if $\Psi_i(x,y) = 0$ for all $i$.
	\end{definition}

	\begin{definition}[Continuity estimate] \label{def:fundamental_inequality_extended}
		Let $\cG: E \times\mathbb{R}^d \times \cP(F) \rightarrow \bR$, $ (x,p,\pi)\mapsto \cG(x,p,\pi)$ be a function and $\{\Psi_1,\Psi_2\}$ be a pair of penalization functions. Suppose that for each $\varepsilon > 0$ there is a sequence $\alpha_2 \rightarrow \infty$. As before, we suppress the dependence on $\varepsilon$. Suppose that for each $\varepsilon$ and $\alpha_2$ , there is a sequence $\alpha_1 \rightarrow \infty$. We suppress writing the dependence of the sequence $\alpha_1$ on $\varepsilon$ and $\alpha_2$. We write $\alpha = (\alpha_1,\alpha_2)$.
		
		Suppose that for each triplet $(\varepsilon,\alpha_1,\alpha_2)$ as above we have variables $(x_{\varepsilon,\alpha},y_{\varepsilon,\alpha})$ in $E^2$ and measures $\pi_{\varepsilon,\alpha}$ in $\cP(F)$. We say that this collection is \textit{fundamental} for $\cG$ with respect to $\{\Psi_1,\Psi_2\}$ if:
		\begin{enumerate}[(C1)]
			\item  For each $\varepsilon$, there are compact sets $K_\varepsilon \subseteq E$ and $\widehat{K}_\varepsilon \subseteq \cP(F)$ such that for all $\alpha$ we have $x_{\varepsilon,\alpha},y_{\varepsilon,\alpha} \in K_\varepsilon$ and $\pi_{\varepsilon,\alpha} \in \widehat{K}_\varepsilon$.
			\item For each $\varepsilon > 0$ and $\alpha_2$ there are limit points $x_{\varepsilon,\alpha_2}, y_{\varepsilon,\alpha_2} \in K_\varepsilon$ of $x_{\varepsilon,\alpha}$ and $y_{\varepsilon,\alpha}$ as $\alpha_1 = \alpha_1(\varepsilon,\alpha_2) \rightarrow \infty$. For each $\varepsilon$ there are limit points $x_\varepsilon,y_\varepsilon$ in $K_\varepsilon$ of $x_{\varepsilon,\alpha_2}$ and $y_{\varepsilon,\alpha_2}$ as $\alpha_2 \rightarrow \infty$. We furthermore have
			\begin{align*}
				& \Psi_1(x_{\varepsilon,\alpha_2},y_{\varepsilon,\alpha_2}) = 0 && \forall \, \varepsilon >0, \, \forall \, \alpha_2, \\
				& \Psi_1(x_\varepsilon,y_\varepsilon) + \Psi_2(x_\varepsilon,y_\varepsilon) = 0, && \forall \, \varepsilon > 0, \\
				& \lim_{\alpha_1 \rightarrow \infty} \alpha_1 \Psi_1(x_{\varepsilon,\alpha_1,\alpha_2},x_{\varepsilon,\alpha_1,\alpha_2}) = 0, && \forall  \, \varepsilon >0, \, \forall \, \alpha_2, \\
				& \lim_{\alpha_2 \rightarrow \infty} \alpha_2 \Psi_2(x_{\varepsilon,\alpha_1},x_{\varepsilon,\alpha_1}) = 0, && \forall \, \varepsilon > 0, \\
			\end{align*}
			\item We have
			\begin{align} 
				& \sup_{\alpha_2} \sup_{\alpha_1} \cG\left(y_{\varepsilon,\alpha}, - \sum_{i=1}^k\alpha_i (\nabla \Psi_i(x_{\varepsilon,\alpha},\cdot))(y_{\varepsilon,\alpha}),\pi_{\varepsilon,\alpha}\right) < \infty, \label{eqn:control_on_Gbasic_sup_extended} \\
				& \inf_{\alpha_2} \inf_{\alpha_1} \cG\left(x_{\varepsilon,\alpha}, \sum_{i=1}^k\alpha_i (\nabla \Psi_i(\cdot,y_{\varepsilon,\alpha}))(y_{\varepsilon,\alpha}),\pi_{\varepsilon,\alpha}\right) > - \infty. \label{eqn:control_on_Gbasic_inf_extended} 	
			\end{align} \label{itemize:funamental_inequality_control_upper_bound_extended}
			In other words, the operator $\cG$ evaluated in the proper momenta is eventually bounded from above and from below.
		\end{enumerate}
		We say that $\cG$ satisfies the \textit{continuity estimate} if for every fundamental collection of variables we have for each $\varepsilon > 0$ that
		\begin{multline}\label{equation:Xi_negative_liminf_extended}
			\liminf_{\alpha_2 \rightarrow \infty} \liminf_{\alpha_1 \rightarrow \infty} \cG\left(x_{\varepsilon,\alpha},\sum_{i=1}^2 \alpha_i \nabla \Psi_i(\cdot,y_{\varepsilon,\alpha})(x_{\varepsilon,\alpha}),\pi_{\varepsilon,\alpha}\right) \\
			- \cG\left(y_{\varepsilon,\alpha},- \sum_{i=1}^2 \alpha_i \nabla \Psi_i(x_{\varepsilon,\alpha},\cdot)(y_{\varepsilon,\alpha}),\pi_{\varepsilon,\alpha}\right) \leq 0.
		\end{multline}
	\end{definition}

	\bibliography{KraaijBib}
\bibliographystyle{abbrv}

\end{document}